\newcommand{\R}{\mathbb{R}}
\newcommand{\C}{\mathbb{C}}
\newcommand{\N}{\mathbb{N}}
\newcommand{\V}{H^1_0(\mathcal{D})}
\newcommand{\ci}{\mathrm{i}} 
\newcommand{\half}{\frac{1}{2}}
\newcommand{\PR}{P_{h}}
\newcommand{\GM}{G_{\hspace{-2pt}\mbox{\tiny$M$}}}
\newcommand{\hatu}{u_{\deltat{}}}
\newcommand{\hatuh}{u_{h,\deltat{}}}
\newcommand{\ham}{\mathcal{H}}
\newcommand{\hatuM}[1]{u_{\deltat{}}^{M,#1}}
\newcommand{\hatuhM}[1]{u_{h,\deltat{}}^{M,#1}}
\newcommand{\hateM}[1]{e_{\deltat{}}^{M,#1}}
\newcommand{\hatehM}[1]{e_{h,\deltat{}}^{M,#1}}
\newcommand{\GammaF}{F}
\newcommand{\Sh}{S_h}
\newcommand{\eps}{\varepsilon}
\newcommand{\Ltwo}[2]{\langle #1 , #2 \rangle_{L^2({\mathcal{D}})}}
\newcommand{\Ltwobig}[2]{\biggl\langle #1 , #2 \biggr\rangle_{\hspace{-1ex}L^2({\mathcal{D}})}}
\newcommand{\deltat}[1]{\tau_{#1}}
\newcommand{\quotes}[1]{``#1''}
\definecolor{dark-green}{rgb}{0.0,0.4,0.0}
\newtheorem{theorem}{Theorem}[section]
\newtheorem{lemma}[theorem]{Lemma}
\theoremstyle{definition}
\newtheorem{definition}[theorem]{Definition}
\newtheorem{remark}[theorem]{Remark}
\newenvironment{fshaded}{%
\MakeFramed {\FrameRestore}}%
{\endMakeFramed}
\newtheorem{cstep}{Step}
\begin{document}

\begin{center}
{\LARGE Crank-Nicolson Galerkin approximations to nonlinear Schr\"odinger equations with rough potentials\renewcommand{\thefootnote}{\fnsymbol{footnote}}\setcounter{footnote}{0}
 \hspace{-3pt}\footnote{P. Henning acknowledges funding by the Swedish Research Council (grant 2016-03339) and D. Peterseim acknowledges support by the Institute for Numerical Simulation at the University of Bonn, by the Hausdorff Center for Mathematics Bonn, and by Deutsche Forschungsgemeinschaft in the Priority Program 1748 \quotes{Reliable simulation techniques in solid mechanics} (PE2143/2-1).}}\\[2em]
\end{center}

\renewcommand{\thefootnote}{\fnsymbol{footnote}}
\renewcommand{\thefootnote}{\arabic{footnote}}

\begin{center}
{\large Patrick Henning\footnote[1]{Department of Mathematics, KTH Royal Institute of Technology, SE-100 44 Stockholm, Sweden.},
Daniel Peterseim\footnote[2]{Institut f\"ur Mathematik, Universit\"at Augsburg, Universit\"atsstr. 14, 86159 Augsburg, Germany}}\\[2em]
\end{center}

\begin{center}
{\large{\today}}
\end{center}

\begin{center}
\end{center}

\begin{abstract}
This paper analyses the numerical solution of a class of non-linear Schr\"odinger equations by Galerkin finite elements in space and a mass- and energy conserving variant of the Crank-Nicolson method due to Sanz-Serna in time. The novel aspects of the analysis are the incorporation of rough, discontinuous potentials in the context of weak and strong disorder, the consideration of some general class of non-linearities, and the proof of convergence with rates in $L^{\infty}(L^2)$ under moderate regularity assumptions that are compatible with discontinuous potentials. For sufficiently smooth potentials, the rates are optimal without any coupling condition between the time step size and the spatial mesh width.
\end{abstract}

\section{Introduction}
This paper is devoted to nonlinear Schr\"odinger equations (NLS) of the form
$$
\ci \partial_t u = - \triangle u + V u + \gamma( |u|^2 ) u 
$$
Here, $u(x,t)$ is a complex valued function, $V(x)$ is a possibly rough/discontinuous potential and $\gamma : [0,\infty) \rightarrow [0,\infty)$ is a smooth function (in terms of the density $|u|^2$) that describes the nonlinearity. A common example is the cubic nonlinearity given by $\gamma(|u|^2)u=\beta |u|^2u$, for $\beta \in \R$, for which the equation is known as the Gross-Pitaevskii equation modeling for instance the dynamics of Bose-Einstein condensates in a potential trap \cite{Gro61,LSY01,Pit61}. In this paper we study Galerkin approximations of the NLS using a finite element space discretization to account for missing regularity due to a possibly discontinuous potential and we use a Crank-Nicolson time discretization to conserve two important invariants of the NLS, namely the mass and the energy. We aim at deriving rate-explicit a priori error estimates and the influence of rough potentials on these rates.

The list of references to numerical approaches for solving the NLS (both time-dependent and stationary) is long and includes \cite{ABB13d,AnD14,BaC13b,BaD04,BaT03,CCM10,DaH10,DaK10,Gau11,HMP14b,JKM14,Lub08,ThA12,Tha12b} and the references therein. For software libraries allowing the simulation of the time-dependent Gross-Pitaevskii equation we refer to \cite{AnD15,SSB16,VVB12}. A priori error estimates for $P1$ finite element approximations for the NLS have been studied in \cite{ADK91,KaM98,KaM99,San84,Tou91,Wan14,Zou01,HeM15}, where an implicit Euler discretization is considered in \cite{ADK91,HeM15}, a mass conservative one-stage Gauss-Legendre implicit Runge-Kutta scheme is analyzed in \cite{Tou91,HeM15}, mass conservative linearly implicit two-step finite element methods are treated in \cite{Zou01,Wan14} and higher order (DG and CG) time-discretizations are considered in \cite{KaM98,KaM99} (however these higher order schemes can lack conservation properties). The only scheme that is both mass and energy conservative at the same time is the modified Crank-Nicolson scheme analyzed by Sanz-Serna \cite{San84} and Akrivis et al. \cite{ADK91}, which is also the approach that we shall follow in this contribution.

The analysis of this modified Crank-Nicolson scheme is devoted to optimal $L^2$-error estimates for sufficiently smooth solutions in both classical papers \cite{San84} and \cite{ADK91}. Sanz-Serna treats the one-dimensional case $d=1$ and periodic boundary conditions and Akrivis et al. consider $d=1,2,3$ and homogeneous Dirichlet boundary conditions. Although the modified Crank-Nicolson scheme is implicit, in both works, optimal error estimates require a constraint on the coupling between the time step $\deltat{}$ and the mesh size $h$. In \cite{San84}, the constraint reads $\deltat{}\lesssim h$ whereas a relaxed constraint of the form $\deltat{} \lesssim h^{d/4}$ is required in \cite{ADK91}. The results are related to the case of the earlier mentioned cubic nonlinearity of the form $\gamma(|u|^2)u=\beta |u|^2u$ and a potential is not taken into account. Finally, we also mention the results obtained by Bao and Cai \cite{BaC12,BaC13} in the context of a finite difference discretization in space. Here, similar coupling conditions are obtained as by Sanz-Serna.

The present paper generalizes the results of Akrivis et al. \cite{ADK91} 
to the case of a broader class of nonlinearities and, more importantly, accounts for potential terms in the NLS. If the potential is sufficiently smooth, even the previous constraints on the time step can be removed without affecting the optimal convergence rates.
To the best of our knowledge, the only other paper that includes potential terms in a finite element based NLS discretization is  \cite{HeM15} which uses a one-stage Gauss-Legendre implicit Runge-Kutta scheme that is not energy-conserving.

While the aforementioned results essentially require continuous potentials, many physically relevant potentials are discontinuous and very rough. Typical examples are disorder potentials \cite{NBP13} or potentials representing quantum arrays in the context Josephson oscillations \cite{WWW98,ZSL98}. 
As the main result of the paper, we will also prove convergence in the presence of such potentials with convergence rates. The rates are lower than the optimal ones for smooth solutions and a coupling condition between the discretization parameters shows up again. Note, however, that this new coupling condition is very different from the one mentioned above as it forces the spatial mesh size to be sufficiently small depending on the time step. While the sharpness of these results for rough potentials remains open, we shall stress that we are not aware of a proof of convergence of any discretization (finite elements, finite differences, spectral methods, etc.) of the NLS in the presence of purely $L^{\infty}$-potentials and that we close this gap with this paper. We note again that we decided for the use of a finite element space discretization as it allows us to work in very low regularity regimes that cannot be handled with spectral or finite difference approaches.
\medskip

The structure of this article is as follows.
Section~\ref{sec:problem} introduces the model problem and its discretization. The main results and the underlying assumptions are stated in Section \ref{sec:main-results}. 
Sections~\ref{s:errorsemi}--\ref{s:errorfull} are devoted to the proof of these results. We present numerical results in Section~\ref{sec:numexp}.
Some supplementary material regarding the feasibility of our assumptions is provided as Appendix~\ref{appendix-b}.

\section{Problem formulation and discretization}\label{sec:problem}

Let ${\mathcal{D}} \subset \R^d$ (for $d=2,3$) be a convex bounded polyhedron that defines the computational domain. We consider a real-valued nonnegative disorder potential $V \in L^{\infty}(\mathcal{D};\R)$. Besides being bounded, $V$ can be arbitrarily rough. Given such $V$, some finite time $T>0$ and some initial data $u^0 \in H^1_0(\mathcal{D}):=H^1_0(\mathcal{D},\C)$, we seek a wave function 
$u \in L^{\infty}([0,T],H^1_0({\mathcal{D}}))$ with $\partial_t  u \in L^{\infty}([0,T],H^{-1}({\mathcal{D}}))$
such that $u(\cdot,0)=u^0 $ and
\begin{eqnarray}
\label{model-problem}\ci \langle \partial_t u(\cdot,t) , w \rangle_{H^{-1},H^1_0}=
\Ltwo{\nabla u(\cdot,t)}{\nabla w}
+ \Ltwo{V \hspace{2pt} u(\cdot,t)}{ w}
+ \Ltwo{(\gamma( |u(\cdot,t)|^2 ) u(\cdot,t)}{w}
\end{eqnarray}
for all $w \in H^1_0({\mathcal{D}})$ and almost every $t \in (0,T]$. Note that any such solution automatically fulfills $u \in C^0([0,T],L^2({\mathcal{D}}))$ so that $u(\cdot,0)=u^0$ makes sense. The nonlinearity in the problem is described by a smooth (real-valued) function $\gamma : [0,\infty) \rightarrow [0,\infty)$ with $\gamma(0)=0$ and the growth condition
\begin{align*} 
|\gamma(|v|^2)v - \gamma(|w|^2)w| \le L(K) |v - w| \qquad \mbox{for all } v,w \in \C \mbox{ with } |v|,|w|\le K
\end{align*}
and
\begin{align*} 
0 \le L(s) \le C s^q \qquad \mbox{for } s \ge 0 \qquad \mbox{and } 
\begin{cases}
q \in [0,\infty) &\mbox{for } d=2,\\
q \in [0,2) &\mbox{for } d=3.
\end{cases}
\end{align*}
Observe that this implies by Sobolev embeddings that 
$\Ltwo{(\gamma( |v|^2 ) v}{w}$ is finite 
for any $v,w \in \V$. We define
$$
\Gamma(\rho):= \int_0^{\rho} \gamma(t) \hspace{2pt} dt \ge 0.
$$
Then, for any  $v \in \V$, the (non-negative) energy is given by
\begin{align*}
E(v) = \int_{\mathcal{D}} |\nabla v|^2 + \int_{\mathcal{D}} V \hspace{2pt} |v|^2  + \int_{\mathcal{D}} \Gamma(|v|^2).
\end{align*}

\begin{remark}[Existence]
\label{prop-exist-and-unique}
There exists at least one solution to problem \eqref{model-problem}. For a corresponding result we refer to \cite[Proposition 3.2.5, Remark 3.2.7, Theorem 3.3.5 and Corollary 3.4.2]{Caz03}. However, uniqueness is only known in exceptional cases. For instance, if $d\le 2$ and $q\le 2$ the solution is also unique (cf. \cite[Theorem 3.6.1]{Caz03}). For further settings that guarantee uniqueness, see \cite[Corollary 3.6.2, Remark 3.6.3 and Remark 3.6.4]{Caz03}.
\end{remark}
$\\$
{\it Temporal discretization.} We consider a time interval $[0,T]$ and a corresponding family of admissible partitions. A partition $\{ I_n| \hspace{2pt} n \in \N,  \hspace{4pt} 1 \le n \le N \}$ is admissible if the $n$'th time interval is given by $I_n := (t_{n-1},t_{n}]$ and if $0=t_0<t_1< \cdots < t_N = T$. Furthermore, we assume that the family of partitions is quasi-uniform, i.e. if $\deltat{n} := t_{n} - t_{n-1}$ denotes the $n$'th time step size and if the maximum is denoted by $\deltat{}:=\max_{1\le n \le N} \deltat{n}$, then there exists a (discretization independent) constant $c_q>0$ such that $\deltat{} \le c_q \min_{1\le n \le N} \deltat{n}$ for all partitions from the family.

$\\$
{\it Spatial discretization.} For the space discretization we consider a finite dimensional subspace $\Sh$ of $H^1_0(\mathcal{D})$ that is parametrized by a mesh size parameter $h$. We make two basic assumptions on $\Sh$ which are fulfilled for $P1$ Lagrange finite elements on quasi-uniform meshes. Let us for this purpose introduce the Ritz-projection $P_h : H^1_0({\mathcal{D}}) \rightarrow \Sh$. For $v \in H^1_0({\mathcal{D}})$ the Ritz-projection $P_h(v) \in S_h$ is the unique solution to the problem
\begin{align}
\label{definition-ritz-projection}\Ltwo{ \nabla v - \nabla P_h(v)) }{ \nabla w_h } =0 \qquad \mbox{for all } w_h\in S_h.
\end{align}
In the following, we make an assumption on the approximation quality of $\PR$, that is that there exists a generic $h$-independent constant $C_{\PR}$ such that
\begin{align}
\label{L2-estimate-PR} \| v - \PR(v) \|_{L^2(\mathcal{D})} \le C_{\PR} 
h^2 | v |_{H^2(\mathcal{D})} \qquad \mbox{for all } v \in H^1_0(\mathcal{D}) \cap H^2(\mathcal{D}).
\end{align}
The second assumption is the availability of a global inverse estimate, i.e. we assume that there exists a generic $h$-independent constant $C_{\mbox{\rm\tiny inv}}$ such that
\begin{align}
\label{inverse-estimate-PR}\| v_h \|_{L^{\infty}(\mathcal{D})} \le C_{\mbox{\rm\tiny inv}} h^{-d/2} \| v_h \|_{L^{2}(\mathcal{D})} 
\qquad \mbox{for all } v_h \in \Sh.
\end{align}
In addition, we assume the existence of  $h$-independent $C_{L^{\infty}}>0$ with
\begin{align}
\label{Linfty-stability-PR}\| \PR(v  )  \|_{L^{\infty}(\mathcal{D})} \le C_{L^{\infty}} \| v \|_{H^{2}(\mathcal{D})} \qquad \mbox{for all } v \in H^1_0(\mathcal{D}) \cap H^2(\mathcal{D}).
\end{align}
The above assumptions are standard in the context of finite elements if quasi-uniformity is available. For instance, for simplicial $P1$ Lagrange finite elements on a quasi-uniform mesh, the estimates \eqref{L2-estimate-PR} and \eqref{inverse-estimate-PR} are satisfied. The last property can be verified by splitting $\PR(v  ) = I_h(v) + (\PR(v)-I_h(v))$ for some $L^{\infty}$-stable Cl\'ement-type quasi-interpolation operator. The estimate \eqref{Linfty-stability-PR} then follows from inverse inequalities and standard $H^1$-estimates for $\PR$.

\bigskip
With these definitions, we introduce the fully discrete Crank-Nicolson method as follows.

\begin{definition}[Fully discrete Crank-Nicolson Method for NLS]
\label{crank-nic-gpe}
We consider the space and time discretizations as detailed above.
Let $u^0=u(\cdot,0)$ be the initial value from problem \eqref{model-problem} and let $\hatuh^0:=\PR(u^0)\in\Sh$.
Then for $n \ge 1$, the fully discrete Crank-Nicolson approximation $\hatuh^{n} \in \Sh$ is given by
\begin{eqnarray}
\label{cnd-problem}\nonumber\lefteqn{\Ltwo{ \hatuh^{n}}{v} +
\deltat{n} \hspace{2pt}\ci \hspace{2pt}
\Ltwo{\nabla \hatuh^{n-\frac{1}{2}}}{\nabla v}
+ \deltat{n} \hspace{2pt}\ci \hspace{2pt}
\Ltwo{V \hspace{2pt} \hatuh^{n-\frac{1}{2}}}{ v}}\\
&+& \deltat{n} \hspace{2pt}\ci \hspace{2pt} \Ltwo{ \frac{\Gamma(|\hatuh^{n}|^2) - \Gamma(|\hatuh^{n-1}|^2)}{|\hatuh^{n}|^2 - |\hatuh^{n-1}|^2} \hatuh^{n-\frac{1}{2}}}{v} = \Ltwo{ \hatuh^{n-1}}{v} 
\end{eqnarray}
for all $v \in \Sh$ and where $\hatuh^{n-\frac{1}{2}}:=(\hatuh^{n}+\hatuh^{n-1})/2$.
\end{definition}

The scheme is mass conserving and energy conserving, i.e. we have
\begin{align*}
\| \hatuh^{n} \|_{L^2(\mathcal{D})} = \| \hatuh^{0} \|_{L^2(\mathcal{D})} \qquad \mbox{and} \qquad
E(\hatuh^{n}) = E(\PR(u^0))
\end{align*}
for all $0\le n \le N$. The mass conservation is verified by testing with $v=\hatuh^{n-\frac{1}{2}}$ in \eqref{cnd-problem} and taking the real part. The energy conservation is verified by testing in \eqref{cnd-problem} with $v=\hatuh^{n}-\hatuh^{n-1}$ and taking the imaginary part.

The conservation properties do not immediately guarantee robustness with respect to numerical perturbations (for instance arising from round-off errors), however, it can be proved that even the perturbed approximations remain uniformly bounded.
\begin{lemma}[Stability under numerical perturbation]\label{lem:stab}
Let $N>1$ and let $F^n \in L^2(\mathcal{D})$ (for $1\le n \le N$) be an $L^2$-perturbation of the discrete problem. We can think of $F^n$ as representing numerical errors. Let $\hatuh^{n} \in \Sh$ for $n\ge1$ be any solution to the (fully-discrete) perturbed problem
\begin{eqnarray*}
\lefteqn{\Ltwo{ \hatuh^{n} - \hatuh^{n-1}}{v} +
\deltat{n} \hspace{2pt}\ci \hspace{2pt}
\Ltwo{\nabla \hatuh^{n-\frac{1}{2}}}{\nabla v}
+ \deltat{n} \hspace{2pt}\ci \hspace{2pt}
\Ltwo{V \hspace{2pt} \hatuh^{n-\frac{1}{2}}}{ v}}\\
&+& \deltat{n} \hspace{2pt}\ci \hspace{2pt} \biggl\langle \frac{\Gamma(|\hatuh^{n}|^2) - \Gamma(|\hatuh^{n-1}|^2)}{|\hatuh^{n}|^2 - |\hatuh^{n-1}|^2} \hatuh^{n-\frac{1}{2}},v\biggr\rangle_{L^2(\mathcal{D})} =  \deltat{n} \hspace{2pt}\ci \hspace{2pt} \Ltwo{ F^{n} }{ v }
\end{eqnarray*}
for all $v \in \Sh$. Then the solutions remain uniformly bounded in $L^2$ with
\begin{eqnarray*}
\| \hatuh^{N} \|_{L^2(\mathcal{D})}^2 
&\le& e^{4} \left( \| P_h(u^0) \|_{L^2(\mathcal{D})}^2 + T\sum_{n=1}^N \deltat{n} \| F^{n} \|_{L^2(\mathcal{D})}^2  \right).
\end{eqnarray*}
\end{lemma}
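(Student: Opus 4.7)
The plan is to reuse the mass-conservation identity in the presence of the perturbation $F^n$ and then close the resulting inequality by a discrete Gronwall argument. I would first test the perturbed equation with the admissible choice $v=\hatuh^{n-\frac{1}{2}}\in\Sh$ and take the real part of both sides. For the left-hand side difference, one has
$\mathrm{Re}\,\Ltwo{\hatuh^{n}-\hatuh^{n-1}}{\hatuh^{n-\frac{1}{2}}}=\tfrac{1}{2}\big(\|\hatuh^{n}\|_{L^2(\mathcal{D})}^2-\|\hatuh^{n-1}\|_{L^2(\mathcal{D})}^2\big)$.
The three terms carrying a factor $\ci\,\deltat{n}$ all vanish upon taking the real part: the Dirichlet term equals $\ci\,\deltat{n}\|\nabla\hatuh^{n-\frac{1}{2}}\|_{L^2(\mathcal{D})}^2\in\ci\R$; the potential term is $\ci\,\deltat{n}\int_{\mathcal{D}}V\,|\hatuh^{n-\frac{1}{2}}|^2\in\ci\R$ since $V$ is real-valued; and the nonlinear term is of the form $\ci\,\deltat{n}\int_{\mathcal{D}}q\,|\hatuh^{n-\frac{1}{2}}|^2$ with the real-valued multiplier $q=(\Gamma(|\hatuh^{n}|^2)-\Gamma(|\hatuh^{n-1}|^2))/(|\hatuh^{n}|^2-|\hatuh^{n-1}|^2)$ (extended by continuity to $\gamma(|\hatuh^{n}|^2)$ where the denominator vanishes), hence also purely imaginary.

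What survives is the clean identity
$\|\hatuh^{n}\|_{L^2(\mathcal{D})}^2-\|\hatuh^{n-1}\|_{L^2(\mathcal{D})}^2=-2\deltat{n}\,\mathrm{Im}\,\Ltwo{F^n}{\hatuh^{n-\frac{1}{2}}}$.
Cauchy--Schwarz and a weighted Young's inequality (with weight of order $1/T$) yield
$\|\hatuh^{n}\|_{L^2(\mathcal{D})}^2-\|\hatuh^{n-1}\|_{L^2(\mathcal{D})}^2 \le 2T\,\deltat{n}\|F^n\|_{L^2(\mathcal{D})}^2+\tfrac{\deltat{n}}{2T}\big(\|\hatuh^{n}\|_{L^2(\mathcal{D})}^2+\|\hatuh^{n-1}\|_{L^2(\mathcal{D})}^2\big)$, after using $\|\hatuh^{n-\frac{1}{2}}\|_{L^2(\mathcal{D})}^2\le\tfrac{1}{2}(\|\hatuh^{n}\|_{L^2(\mathcal{D})}^2+\|\hatuh^{n-1}\|_{L^2(\mathcal{D})}^2)$. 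Rearranged, this gives a recursion of the type $\|\hatuh^{n}\|_{L^2(\mathcal{D})}^2(1-\alpha_n)\le\|\hatuh^{n-1}\|_{L^2(\mathcal{D})}^2(1+\alpha_n)+c\,T\deltat{n}\|F^n\|_{L^2(\mathcal{D})}^2$ with $\alpha_n=\deltat{n}/(4T)$.

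A discrete Gronwall step finishes the argument: once $\deltat{}$ is small enough that $\alpha_n\le 1/2$ (which we may assume WLOG, since otherwise $T/\deltat{}\lesssim 1$ and the estimate follows by a trivial iteration), the multiplicative factor is bounded by $(1+\alpha_n)/(1-\alpha_n)\le\exp(4\alpha_n)$. Telescoping and using $\sum_{n=1}^{N}\deltat{n}=T$ controls the exponent by an absolute constant, and summing the perturbation contribution produces the claimed bound with $e^{4}$ as the prefactor and the factor $T$ in front of $\sum_{n=1}^{N}\deltat{n}\|F^n\|_{L^2(\mathcal{D})}^2$. The only subtle point that could go wrong is the pointwise real-valuedness of the nonlinear quotient multiplier $q$ even where $|\hatuh^{n}|=|\hatuh^{n-1}|$; this is handled by the continuous extension above, and after that all manipulations are standard.
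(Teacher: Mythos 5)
Your proposal is correct and follows essentially the same route as the paper's proof: test with the midpoint value (the paper uses $v=\hatuh^{n}+\hatuh^{n-1}=2\hatuh^{n-\frac12}$, a trivial rescaling of your choice), take the real part so that the Dirichlet, potential and nonlinear terms drop out, apply a $T$-weighted Young inequality to the perturbation term, and close with a discrete Gr\"onwall iteration. The minor differences (your slightly looser constant in the Young step, the explicit remark on the continuous extension of the nonlinear quotient) do not change the argument or the final bound.
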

\begin{proof}
We test in the problem formulation with $v=\hatuh^{n} + \hatuh^{n-1}$ and take the real part. This yields
\begin{eqnarray*}
\lefteqn{\| \hatuh^{n} \|_{L^2(\mathcal{D})}^2 - \| \hatuh^{n-1} \|_{L^2(\mathcal{D})}^2
= - \deltat{n} \hspace{2pt} \Im \Ltwo{ F^{n} }{ \hatuh^{n} + \hatuh^{n-1} }}\\
&\le& \frac{T}{2}\deltat{n} \| F^{n} \|_{L^2(\mathcal{D})}^2 + \frac{\deltat{n}}{T} \| \hatuh^{n} \|_{L^2(\mathcal{D})}^2 + \frac{\deltat{n}}{T} \| \hatuh^{n-1} \|_{L^2(\mathcal{D})}^2.
\end{eqnarray*}
Hence
\begin{eqnarray*}
\| \hatuh^{n} \|_{L^2(\mathcal{D})}^2 \le \left( 1 + \frac{2 \deltat{n}}{T-\deltat{n}} \right) \| \hatuh^{n-1} \|_{L^2(\mathcal{D})}^2
+ \frac{1}{2}\frac{\deltat{n}}{(T-\deltat{n})} T^2 \| F^{n} \|_{L^2(\mathcal{D})}^2.
\end{eqnarray*}
Applying this iteratively gives us
\begin{eqnarray*}
\| \hatuh^{N} \|_{L^2(\mathcal{D})}^2 &\le& e^{ \sum_{n=1}^N \frac{2 \deltat{n}}{T-\deltat{n}}} \left( \| \hatuh^{0} \|_{L^2(\mathcal{D})}^2
+ \sum_{n=1}^N \frac{1}{2}\frac{\deltat{n}}{(T-\deltat{n})} T^2 \| F^{n} \|_{L^2(\mathcal{D})}^2  \right)\\
&\le& e^{4} \left( \| \hatuh^{0} \|_{L^2(\mathcal{D})}^2
+ T \sum_{n=1}^N \deltat{n} \| F^{n} \|_{L^2(\mathcal{D})}^2  \right).
\end{eqnarray*}
\end{proof}

\section{Main results}\label{sec:main-results}

While the basic stability of the method in Lemma~\ref{lem:stab} does not require any additional smoothness assumptions, our quantified convergence and error analysis of the method relies on the regularity of $u$. We will use three types of regularity assumptions. 
\begin{itemize}
\item[(R1)] Assume that $u \in C^{0}( [0,T],H^2(\mathcal{D}))$, $\partial_t u \in L^4(\mathcal{D} \times (0,T))$ and 
$\partial_{tt} u \in L^2(\mathcal{D} \times (0,T))$.
\item[(R2)] Assume that $u^0 \in H^2(\mathcal{D})$ and $\partial_t u \in L^2(0,T;H^2(\mathcal{D}))$.
\item[(R3)] Assume that $\partial_{tt} u \in L^2(0,T;H^2(\mathcal{D}))$.
\end{itemize}
The first assumption allows the proof of convergence rates for the time-discretization and the second one is related to the optimal convergence rates for the space-discretization. 
Note that the high spatial regularity $u \in C^{0}( [0,T], H^2(\mathcal{D}))$ in (R1) implies that $u \in L^{\infty}(\mathcal{D} \times (0,T))$ for $d\le 3$ which is crucial for our proofs as they rely on uniform $L^{\infty}$-bounds for the discrete solutions and there is no hope for such thing if the continuous solution is unbounded in $L^{\infty}(\mathcal{D} \times (0,T))$. 
The third assumption (R3) will be used to obtain  optimal convergence rates for the time-discretization in the case of smooth potentials. We cannot expect (R3) to hold in the case of rough disorder potentials $V \in L^{\infty}(\mathcal{D})$. It is, however, possible to show that the assumptions (R1) and (R2) do not conflict with disorder potentials. We discuss this aspect in more detail in Appendix~\ref{appendix-b}. 

Before we state the main results, we shall show that every smooth solution that satisfies (R1) must be unique. Recall that we cannot guarantee uniqueness in general (cf. Remark~\ref{prop-exist-and-unique}). 
\begin{lemma}[Uniqueness of smooth solutions]
Any two solutions of the NLS \eqref{model-problem} that fulfill (R1) must be identical.
\end{lemma}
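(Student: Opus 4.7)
The plan is to derive an $L^2$-energy estimate for the difference of two solutions and then apply Gr\"onwall's inequality. Let $u_1, u_2$ be two solutions of \eqref{model-problem} both satisfying (R1), sharing the initial datum $u^0$, and set $e := u_1 - u_2$. A key preliminary observation is that (R1) yields $u_j \in C^0([0,T], H^2(\mathcal{D}))$, which embeds continuously into $C^0([0,T], L^\infty(\mathcal{D}))$ for $d\le 3$; hence there is a constant $K>0$ with $\|u_j(\cdot,t)\|_{L^\infty(\mathcal{D})} \le K$ uniformly in $t \in [0,T]$ and $j\in\{1,2\}$. This $L^\infty$-bound is exactly what is needed to activate the local Lipschitz hypothesis on the nonlinearity.

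Next I would subtract the weak formulations for $u_1$ and $u_2$, obtaining for every $w \in H^1_0(\mathcal{D})$ and a.e.\ $t$
\begin{equation*}
\ci \langle \partial_t e , w \rangle_{H^{-1},H^1_0} = \Ltwo{\nabla e}{\nabla w} + \Ltwo{V\, e}{w} + \Ltwo{\gamma(|u_1|^2)u_1 - \gamma(|u_2|^2)u_2}{w}.
\end{equation*}
Since (R1) furnishes $\partial_t u_j \in L^2(\mathcal{D}\times(0,T))$ and $e(\cdot,t) \in H^1_0(\mathcal{D})$, I may choose $w = e(\cdot,t)$, and the duality pairing collapses to the $L^2$-inner product. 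Taking the imaginary part of both sides then eliminates the self-adjoint contributions: the quadratic form $\Ltwo{\nabla e}{\nabla e} + \Ltwo{V\,e}{e}$ is real because $V$ is real-valued, while $\mathrm{Im}(\ci\, \Ltwo{\partial_t e}{e}) = \mathrm{Re}\, \Ltwo{\partial_t e}{e} = \tfrac{1}{2}\tfrac{d}{dt}\|e\|_{L^2(\mathcal{D})}^2$. This chain-rule identity is justified by $e \in C^0([0,T],L^2(\mathcal{D}))$ together with $\partial_t e \in L^2([0,T],L^2(\mathcal{D}))$.

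The resulting differential inequality reads
\begin{equation*}
\tfrac{1}{2}\tfrac{d}{dt}\|e(\cdot,t)\|_{L^2(\mathcal{D})}^2 = \mathrm{Im}\,\Ltwo{\gamma(|u_1|^2)u_1 - \gamma(|u_2|^2)u_2}{e}.
\end{equation*}
By the local Lipschitz assumption on $\gamma$ with constant $L(K)$, invoked with the uniform bound established in the first step, and by Cauchy--Schwarz, the right-hand side is bounded above by $L(K)\|e(\cdot,t)\|_{L^2(\mathcal{D})}^2$. Since both solutions share the same initial datum, $\|e(\cdot,0)\|_{L^2(\mathcal{D})} = 0$, and Gr\"onwall's inequality forces $\|e(\cdot,t)\|_{L^2(\mathcal{D})} \equiv 0$ on $[0,T]$, i.e.\ $u_1 = u_2$.

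The only delicate point is the admissibility of testing the difference of the two weak equations against $e$ itself and the associated chain rule. Both are routine consequences of (R1); the heart of the argument is the $H^2 \hookrightarrow L^\infty$ embedding, which is exactly where the assumption (R1) plays its decisive role in circumventing the general non-uniqueness indicated in Remark~\ref{prop-exist-and-unique}.
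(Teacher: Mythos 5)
Your proposal is correct and follows essentially the same route as the paper: bound both solutions in $L^\infty$ via the $H^2\hookrightarrow L^\infty$ embedding from (R1), test the difference of the equations with $e$, take the imaginary part so that only the nonlinear term survives, estimate it by $L(K)\|e\|_{L^2}^2$, and conclude with Gr\"onwall. The only cosmetic difference is that the paper keeps the potential term inside its function $g$ and absorbs $\|V\|_{L^\infty}$ into the Gr\"onwall constant, whereas you correctly observe that $\Im\int_{\mathcal D} V|e|^2=0$ so this term drops out entirely.
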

\begin{proof}
Let $g(v):=V v + \gamma(|v|^2)v$ for $v\in H^1_0(\Omega)$ and 
let $u_1$ and $u_2$ denote two smooth solutions in the sense that $u^{(1)},u^{(2)} \in C^{0}( [0,T],H^2(\mathcal{D}))$. By Sobolev embedding we can define
$K:= \max_{k=1,2} \| u^{(k)} \|_{L^{\infty}( [0,T] \times \mathcal{D})}< \infty$.
With
$e:=u^{(1)}- u^{(2)}$ we obtain for $t\ge 0$
\begin{align*}
\frac{1}{2} \frac{d}{dt} \| e(t) \|_{L^2(\mathcal{D})}^2 = \Im \int_{\mathcal{D}} (g(u^{(1)}(t)) - g(u^{(1)}(t))) \overline{e(t)}
\le \left( L(K)+\|V\|_{L^{\infty}(\mathcal{D})} \right) \| e(t) \|_{L^2(\mathcal{D})}^2.
\end{align*}
Time integration and $e(0)=0$ then yield
\begin{align*}
\| e(t) \|_{L^2(\mathcal{D})}^2 \le \left( L(K)+\|V\|_{L^{\infty}(\mathcal{D})} \right) \int_0^t \| e(s) \|_{L^2(\mathcal{D})}^2 \hspace{2pt} ds.
\end{align*}
Hence, Gr\"onwall's inequality can be applied and shows $\| e(t) \|_{L^2(\mathcal{D})}^2=0$ for all $t$.
\end{proof}

The first main result of this paper states that, under the assumption of sufficient regularity,
the Crank-Nicolson scheme \eqref{cnd-problem} admits a solution that remains uniformly bounded in $L^{\infty}$ and we obtain optimal convergence rates for the $L^{\infty}(L^2)$-error, independent of the coupling between the mesh size $h$ and the time-step size $\deltat{}$.

\begin{theorem}[Estimates for smooth potentials]
\label{main-theorem-2-a}
Under the regularity assumption (R1), (R2) and (R3), there exist positive constants $\hat{h}>0$ and $\hat{\deltat{}}>0$ such that for all partitions with parameters  $\deltat{}<\hat{\deltat{}}$ and $h < \hat{h}$ 
there exists a unique solution $\hatuh^{n} \in \Sh$ to the fully discrete Crank-Nicolson scheme \eqref{cnd-problem} with
\begin{align*}
\sup_{0\le n \le N} \| \hatuh^{n} \|_{L^{\infty}(\mathcal{D})} \le M,
\end{align*}
where $M:=1+ 2 \| u \|_{L^{\infty}(\mathcal{D} \times (0,T))} 
+ \sup_{0\le n \le N} \| u^n \|_{H^{2}(\mathcal{D})}$ and $u^{n}:=u(\cdot,t_n)$. 
Moreover, the a priori error estimate
\begin{align*}
\sup_{0\le n \le N} \| \hatuh^{n} - u^{n} \|_{L^2(\mathcal{D})} \le C \left( h^2 +\deltat{}^2 \right)
\end{align*}
holds with some constant $C>0$ that may depend on $u$, $\gamma$, $V$, $\mathcal{D}$ and the constants appearing in \eqref{L2-estimate-PR}-\eqref{Linfty-stability-PR} but not on the mesh parameters $\deltat{}$ and $h$.
\end{theorem}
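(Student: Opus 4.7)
My plan is to combine a two-level decomposition of the error with an induction over time steps, in line with the split into Sections~\ref{s:errorsemi} and~\ref{s:errorfull}. I introduce the time-semidiscrete Crank-Nicolson iterate $\hatu^{n}\in\V$ (continuous in space) and write
\[
\hatuh^{n} - u^{n} \;=\; \bigl(\hatuh^{n} - \PR\hatu^{n}\bigr) \;+\; \PR\bigl(\hatu^{n} - u^{n}\bigr) \;+\; \bigl(\PR u^{n} - u^{n}\bigr).
\]
The last summand is $O(h^2)$ by \eqref{L2-estimate-PR} and (R1). The middle one, by $L^2$-stability of $\PR$, reduces to the time-semidiscrete error, for which I would prove an $O(\deltat{}^2)$ bound in $L^{\infty}(L^2)$ under (R1) and (R3). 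The first summand is the genuinely fully discrete perturbation, for which I would derive another $O(h^2)$ bound under (R1) and (R2). The final estimate is then the sum of these three contributions.

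\textbf{Existence and uniform $L^{\infty}$-bound.} Since \eqref{cnd-problem} is an implicit nonlinear finite-dimensional system, existence at step $n$ (given $\hatuh^{n-1}$) I would establish by a Brouwer fixed-point argument on an $L^2$-ball in $\Sh$; the mass identity obtained from testing with $\hatuh^{n-\half}$ guarantees invariance of the ball. Uniqueness, together with the pointwise bound $\|\hatuh^{n}\|_{L^{\infty}(\mathcal{D})}\le M$, I would then obtain jointly by induction on $n$. Assuming the $L^2$ error estimate and $\|\hatuh^{k}\|_{L^{\infty}(\mathcal{D})}\le M$ for $k\le n-1$, the error bound at step $n$ yields $\|\hatuh^{n}-\PR u^{n}\|_{L^2(\mathcal{D})} \le C(h^2+\deltat{}^2)$. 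Combining \eqref{inverse-estimate-PR} and \eqref{Linfty-stability-PR} then gives
\[
\|\hatuh^{n}\|_{L^{\infty}(\mathcal{D})} \le \|\PR u^{n}\|_{L^{\infty}(\mathcal{D})} + C_{\mbox{\rm\tiny inv}}\, h^{-d/2} \cdot C(h^2+\deltat{}^2),
\]
and because $d\le 3$, the factor $h^{-d/2}(h^2+\deltat{}^2)$ tends to zero as $h,\deltat{}\to 0$ \emph{independently}, closing the induction without any coupling condition.

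\textbf{Error equation and energy testing.} For the $L^2$ estimate at step $n$ I would insert $u^{n}$ into \eqref{cnd-problem} and subtract \eqref{model-problem} evaluated at the midpoint $t_{n-\half}$. This generates three consistency defects: the midpoint-average discrepancy $\half(u^{n}+u^{n-1}) - u(t_{n-\half})$, the difference-quotient discrepancy $(u^{n}-u^{n-1})/\deltat{n} - \partial_t u(t_{n-\half})$, and the secant-type defect
\[
\frac{\Gamma(|u^n|^2)-\Gamma(|u^{n-1}|^2)}{|u^n|^2-|u^{n-1}|^2} \;-\; \gamma\bigl(|u(t_{n-\half})|^2\bigr).
\]
Under (R1) and (R3), second-order Taylor expansions in time around $t_{n-\half}$ render each of these $O(\deltat{n}^2)$ in $L^2(\mathcal{D})$; the Ritz-commutator $\PR\partial_t u - \partial_t u$ contributes an additional $O(h^2)$ under (R2). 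Testing the resulting equation for $e_h^{n}:=\hatuh^{n}-\PR u^{n}$ with $v=e_h^{n}+e_h^{n-1}$ and taking the real part cancels the Laplacian and the real potential, reducing the analysis to the nonlinear term, which the local Lipschitz property of $\gamma(|\cdot|^2)\cdot$ together with the inductive $L^{\infty}$-bound controls by $C(M)\,\|e_h^{n-\half}\|_{L^2(\mathcal{D})}^2$. A discrete Gronwall inequality (in the spirit of Lemma~\ref{lem:stab}) then converts the summed consistency errors into the stated $L^{\infty}(L^2)$ bound.

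\textbf{Main obstacle.} The crucial difficulty is reconciling the $L^{\infty}$-induction, which is needed to invoke the local Lipschitz constant $L(M)$ and hence to bound the nonlinearity at all, with the $L^2$-estimate \emph{without} a CFL-type coupling $\deltat{}\lesssim h^{\alpha}$ as in \cite{San84,ADK91}. Success hinges on producing the \emph{full} second-order bound $C(h^2+\deltat{}^2)$ before closing the induction, so that the factor $h^{-d/2}$ from the inverse estimate is absorbed for every $d\le 3$ regardless of the ratio $\deltat{}/h$. A second, more technical hurdle is the careful second-order consistency analysis of the $\Gamma$-secant term, since a crude Lipschitz estimate loses an order in $\deltat{}$ and would spoil the optimal rate; this requires exploiting cancellations in the Taylor expansion of $\Gamma$ around $|u(t_{n-\half})|^2$ using the smoothness of $\gamma$ and the $L^{\infty}$-regularity of $u$ implied by (R1).
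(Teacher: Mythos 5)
Your two-level strategy (introduce the time-semidiscrete iterate $\hatu^{n}$, then split the error) is the right skeleton and matches the paper's Sections~\ref{s:errorsemi}--\ref{s:errorfull}. However, the step you use to close the $L^{\infty}$-induction contains a genuine error. You claim that
\[
\|\hatuh^{n}\|_{L^{\infty}(\mathcal{D})} \le \|\PR u^{n}\|_{L^{\infty}(\mathcal{D})} + C_{\mbox{\rm\tiny inv}}\, h^{-d/2}\, C\bigl(h^2+\deltat{}^2\bigr)
\]
closes the induction because $h^{-d/2}(h^2+\deltat{}^2)\to 0$ as $h,\deltat{}\to 0$ \emph{independently}. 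This is false: the term $h^{-d/2}\deltat{}^{2}$ diverges whenever $h\ll \deltat{}^{4/d}$ (take $d=3$ and $h=\deltat{}^{10}$, say). Controlling it requires exactly the coupling $\deltat{}\lesssim h^{d/4}$ of Akrivis et al.\ \cite{ADK91}, which is the constraint Theorem~\ref{main-theorem-2-a} is designed to remove. In other words, the one-level induction that compares $\hatuh^{n}$ with $\PR u^{n}$ and feeds the \emph{full} error $C(h^2+\deltat{}^2)$ into the inverse estimate \eqref{inverse-estimate-PR} cannot avoid a CFL-type condition; your ``Main obstacle'' paragraph identifies the right difficulty but the proposed resolution does not resolve it.

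The paper's remedy is to make sure that only the purely spatial part of the error ever meets the inverse estimate. Concretely, the fully discrete (truncated) solution is compared with $\PR(\hatu^{n})$ -- the Ritz projection of the \emph{semi-discrete} solution, not of $u^{n}$ -- and Lemma~\ref{lemma-L2-hatuhm-hatu} shows $\|\hatuhM{n}-\PR(\hatu^{n})\|_{L^2(\mathcal{D})}\le C h^2\deltat{}^{s-2}$, which under (R3) (i.e.\ $s=2$) is $O(h^2)$ with \emph{no} $\deltat{}^2$ contribution; then $h^{-d/2}\cdot h^{2}=h^{2-d/2}\to 0$ for $d\le 3$ unconditionally. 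The $O(\deltat{}^{2})$ part of the error is confined to the semi-discrete level, where the $L^{\infty}$-bound is obtained not from an inverse inequality but from elliptic regularity: Lemma~\ref{lemma-bound-laplacian-hateM} bounds $\|\ham(\hatu^{n}-u^{n})\|_{L^2(\mathcal{D})}$ and \eqref{bound-lmma-stp-2} converts this into an $L^{\infty}$-bound. A second structural difference is that the paper replaces your step-by-step induction by a truncated nonlinearity $\gamma_M$ (Lemma~\ref{truncation-lemma}): all estimates are proved for the truncated scheme with an a priori global Lipschitz bound, the uniform $L^{\infty}$-bound is then verified a posteriori, and only at the end is the truncation removed by observing $\hatuhM{n}=\hatuh^{n}$. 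Your remaining ingredients (Brouwer for existence, testing the error equation with $e_h^{n}+e_h^{n-1}$, second-order consistency of the $\Gamma$-secant via Taylor expansion) are all consistent with the paper, but without the intermediate comparison against $\PR(\hatu^{n})$ the central claim of the theorem -- no coupling between $h$ and $\deltat{}$ -- does not follow from your argument.
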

The uniqueness of fully discrete approximations in Theorem~\ref{main-theorem-2-a} is to be understood in the sense that any other family of approximations must necessarily diverge in $L^{\infty}$ as $\deltat{},h\rightarrow 0$. The second main result applies to the case of rough potentials. 
\begin{theorem}[Estimates for disorder potentials]
\label{main-theorem-2-b}
Assume only (R1) and (R2). Then there exists $\hat{\deltat{}}>0$ such that for all partitions with parameters $\deltat{}<\hat{\deltat{}}$ and $h^{4-d-\alpha} \lesssim \deltat{}^2$ for some $\alpha>0$ there exists a unique solution $\hatuh^{n} \in \Sh$ to the fully-discrete Crank-Nicolson scheme \eqref{cnd-problem} such that
\begin{align*}
\sup_{0\le n \le N} \| \hatuh^{n} \|_{L^{\infty}(\mathcal{D})} \le M,
\end{align*}
with $M$ as defined in Theorem~\ref{main-theorem-2-a}, and the a priori error estimate 
\begin{align*}
\sup_{0\le n \le N} \| \hatuh^{n} - u^{n} \|_{L^2(\mathcal{D})} \le C \left( h^{(d+\alpha)/2} + \deltat{} \right)
\end{align*}
holds for some constant $C=C(u,\gamma,V,\mathcal{D},\PR,\alpha)$ independent of $h$ and $\deltat{}$. 
\end{theorem}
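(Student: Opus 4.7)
The plan is to follow the classical Crank-Nicolson error skeleton via the Ritz projection and induction on the time step, deliberately avoiding any use of (R3) by using the NLS itself to convert demands for spatial regularity of time derivatives of $u$ into purely temporal ones, all of which are covered by (R1) and (R2).

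First, split $\hatuh^n - u^n = \theta^n + \eta^n$, where $\eta^n := P_h u^n - u^n$ satisfies $\|\eta^n\|_{L^2}\lesssim h^2$ by (R2) and \eqref{L2-estimate-PR}, and $\theta^n := \hatuh^n - P_h u^n\in S_h$ is the fully discrete part. I would proceed inductively on $n$ under the hypothesis $\|\hatuh^k\|_{L^\infty}\le M$ for $k<n$; this makes the nonlinearity uniformly Lipschitz and yields existence and uniqueness of $\hatuh^n$ by a Banach fixed-point argument, as for Theorem~\ref{main-theorem-2-a}. Subtract \eqref{cnd-problem} from \eqref{model-problem} evaluated at $t_{n-1/2}$ and use the Ritz property to cancel the $\nabla$-terms on test functions $v\in S_h$. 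The resulting equation for $\theta^n$ fits the framework of Lemma~\ref{lem:stab} with source term $F^n$ that collects: (i) the Crank-Nicolson time truncation $T^n:=(u^n-u^{n-1})/\tau-\partial_t u(t_{n-1/2})$; (ii) the midpoint averaging residual $\rho^n:=(u^n+u^{n-1})/2-u(t_{n-1/2})$, appearing both $V$-paired and $\nabla$-paired; (iii) the Ritz time derivative $(\eta^n-\eta^{n-1})/\tau$; and (iv) the nonlinear consistency coming from the $\Gamma$-secant quotient, which under the induction hypothesis is Lipschitz-controlled against $\theta^{n-1/2}$ plus a nonlinear midpoint residual.

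The main obstacle is the $\nabla$-paired contribution $\langle\nabla\rho^n,\nabla v\rangle$: a direct $L^2$-bound on $\nabla\rho^n$ would require $\partial_{tt}u\in L^2(0,T;H^1)$, which is exactly the type of regularity that (R3) would provide but that cannot be expected for rough $V\in L^\infty$. To avoid this I would integrate by parts (legitimate since $\rho^n\in H^2\cap H^1_0$ by (R1) and $v\in H^1_0$) and then substitute the strong form $-\triangle u(\cdot,t)=\ci\partial_t u(\cdot,t)-Vu(\cdot,t)-\gamma(|u|^2)u$ at the three times $t_{n-1},t_{n-1/2},t_n$, to obtain
\[
-\triangle\rho^n = \ci\,\tilde\rho^n - V\rho^n - \phi^n,
\]
where $\tilde\rho^n := \tfrac12[\partial_t u(t_n)+\partial_t u(t_{n-1})] - \partial_t u(t_{n-1/2})$ and $\phi^n$ is the analogous nonlinear midpoint residual of $\gamma(|u|^2)u$. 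An integral-remainder computation yields $(\sum_n\tau\|\tilde\rho^n\|_{L^2}^2)^{1/2}\lesssim \tau\|\partial_{tt}u\|_{L^2(L^2)}$ using only (R1); $V\rho^n$ is of order $\tau^2$ in $L^2$ by $\|V\|_{L^\infty}<\infty$; and $\phi^n$ is controlled through the smoothness and growth of $\gamma$, the (R1) bound on $u$, and $\|\rho^n\|_{L^2}$. Combined with standard integral bounds on $T^n$ and $(\eta^n-\eta^{n-1})/\tau$, the aggregate residual satisfies $(\sum_n\tau\|F^n\|_{L^2}^2)^{1/2}\lesssim h^2+\tau$ under (R1) and (R2) alone.

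Finally, Lemma~\ref{lem:stab} with $\theta^0=0$ gives $\|\theta^n\|_{L^2}\lesssim h^2+\tau$ and hence $\|\hatuh^n-u^n\|_{L^2}\lesssim h^2+\tau$. Closing the induction requires verifying $\|\hatuh^n\|_{L^\infty}\le M$ at step $n$, which is where the coupling enters: combining \eqref{Linfty-stability-PR} for $P_h u^n$ with the inverse estimate \eqref{inverse-estimate-PR} for $\theta^n$ yields $\|\hatuh^n\|_{L^\infty}\le C\|u^n\|_{H^2}+C_{\mathrm{inv}}h^{-d/2}\|\theta^n\|_{L^2}$. The hypothesis $h^{4-d-\alpha}\lesssim\tau^2$ is exactly equivalent to $h^2\lesssim h^{(d+\alpha)/2}\tau$ and is calibrated so that the above quantity remains controllable for $\hat\tau$ sufficiently small; the same equivalence rewrites the error bound $h^2+\tau$ in the announced form $C(h^{(d+\alpha)/2}+\tau)$. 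The strong-form substitution eliminating $\nabla\rho^n$ is the technical heart of the argument; everything else is a careful adaptation of the proof behind Theorem~\ref{main-theorem-2-a}.
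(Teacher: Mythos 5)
Your consistency analysis is sound as far as it goes: integrating the trapezoidal defect by parts and substituting the strong form $-\triangle u=\ci\partial_t u-Vu-\gamma(|u|^2)u$ to trade spatial regularity of $\partial_{tt}u$ for purely temporal regularity is legitimate, and it is close in spirit to the paper's use of $\ham=\triangle-V$ in the error identities of Lemma~\ref{lemma-error-ids}. The genuine gap is in closing the $L^{\infty}$ induction. Your one-level splitting puts the entire time-discretization error into $\theta^n=\hatuh^n-P_h u^n$, so even granting $\|\theta^n\|_{L^2}\lesssim h^2+\deltat{}$, the inverse estimate \eqref{inverse-estimate-PR} only yields $\|\theta^n\|_{L^{\infty}}\lesssim h^{2-d/2}+h^{-d/2}\deltat{}$. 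The coupling $h^{4-d-\alpha}\lesssim\deltat{}^2$ bounds $\deltat{}$ from \emph{below} by a power of $h$ (the intended regime is $h\ll\deltat{}$), so $h^{-d/2}\deltat{}$ is not controllable: it blows up as $h\to0$ at fixed $\deltat{}$. Making it small would require $\deltat{}\lesssim h^{d/2}$, which is exactly the Sanz-Serna/Akrivis-type constraint the theorem is designed to eliminate. Your statement that the coupling ``is calibrated so that the above quantity remains controllable'' is therefore false, and the induction does not close. Relatedly, your intermediate bound $h^2+\deltat{}$ is strictly stronger than the theorem's $h^{(d+\alpha)/2}+\deltat{}$; the degradation of the spatial rate is not cosmetic but is precisely the price of the mechanism that does close the $L^{\infty}$ bound.

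The paper's proof is architected around this obstruction. It inserts the semi-discrete (time-discrete, space-continuous) Crank--Nicolson solution $\hatu^{n}$ as an intermediate object, obtains its $L^{\infty}$- and $H^2$-bounds from $\|\ham(\hatu^{n}-u^{n})\|_{L^2}$ via elliptic regularity (Lemmas~\ref{lemma-bound-laplacian-hateM} and \ref{lemma-L-infty-bounds-for-semidiscrete-trunc-approx}) with no inverse inequality, and applies the inverse inequality only to the purely spatial discrepancy $\hatuh^{n}-\PR(\hatu^{n})$, which Lemma~\ref{lemma-L2-hatuhm-hatu} bounds by $h^{2}\deltat{}^{s-2}$; then $h^{-d/2}\cdot h^{2}\deltat{}^{-1}\lesssim h^{\alpha/2}$ under the stated coupling, and this $h^{2}\deltat{}^{-1}$ is also the source of the reduced rate $h^{(d+\alpha)/2}$. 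A second, more local gap in your argument: the secant quotient $\bigl(\Gamma(|\hatuh^{n}|^2)-\Gamma(|\hatuh^{n-1}|^2)\bigr)/\bigl(|\hatuh^{n}|^2-|\hatuh^{n-1}|^2\bigr)$ at step $n$ involves $\hatuh^{n}$ itself, whose $L^{\infty}$ bound is not yet available inside the induction, and for $\gamma(s)\sim s^{q}$ this quotient is not a priori bounded; the paper resolves this by working throughout with the truncated nonlinearity $\gamma_M$ of Lemma~\ref{truncation-lemma} and identifying the truncated with the original solution only at the end. Your appeal to a Banach fixed-point argument does not supply this control, nor does it match the paper's Brouwer-based existence proof.
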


Sections~\ref{s:errorsemi}--\ref{s:errorfull} below are devoted to the proof of Theorems \ref{main-theorem-2-a} and \ref{main-theorem-2-b}.

\begin{remark}[Coupling constraint]
The results of Theorem \ref{main-theorem-2-b} are valid under the constraint $h^{4-d-\alpha} \lesssim \deltat{}^2$ for some $\alpha>0$. This means that the mesh size needs to be small enough compared the time step size. Observe that this is a rather natural assumption if the potential $V$ is indeed a rough potential (as addressed in the theorem). In such a case we wish use a fine spatial mesh to resolve the variations of $V$, whereas the time step size is comparably large. Hence, the constraint is not critical. Conversely, the constraints appearing in works by Sanz-Serna \cite{San84} and Akrivis et al. \cite{ADK91} are of a completely different nature, as they require the time step size to be small compared to the mesh size. Therefore, using a fine spatial mesh to resolve the structure of $V$ would impose small time steps as well. 
\end{remark}

\section{Error analysis for the semi-discrete method}\label{s:errorsemi}
In this section we shall consider a semi-discrete Crank-Nicolson approximation given as follows.

\begin{definition}[Semi-discrete Crank-Nicolson Method for NLS]
\label{semi-discrete-crank-nic-gpe}
Let $u^0=u(\cdot,0)$ be the initial value from problem \eqref{model-problem} and let $\hatu^0:=u^0$. Then for $n \ge 1$, we define the semi-discrete Crank-Nicolson approximation $\hatu^{n} \in H^1_0(\mathcal{D})$ as the solution to
\begin{eqnarray}
\label{semi-disc-cnd-problem}\nonumber\lefteqn{\Ltwo{ \hatu^{n}}{v} +
\deltat{n} \hspace{2pt}\ci \hspace{2pt}
\Ltwo{\nabla \hatu^{n-\frac{1}{2}}}{\nabla v}
+ 
\deltat{n} \hspace{2pt}\ci \hspace{2pt}
\Ltwo{V \hspace{2pt} \hatu^{n-\frac{1}{2}}}{ v}
 }\\
&\enspace& + \deltat{n} \hspace{2pt}\ci \hspace{2pt} \Ltwobig{ \frac{\Gamma(|\hatu^{n}|^2) - \Gamma(|\hatu^{n-1}|^2)}{|\hatu^{n}|^2 - |\hatu^{n-1}|^2} \hatu^{n-\frac{1}{2}}}{v} = \Ltwo{ \hatu^{n-1}}{v} 
\end{eqnarray}
for all $v \in H^1_0(\mathcal{D})$ and where $\hatu^{n-\frac{1}{2}}:=(\hatu^{n}+\hatu^{n-1})/2$.
\end{definition}

We want to prove that the above problem is well-posed and we want to estimate the $L^2$- and $H^2$-error between $\hatu$ and the exact solution $u$. This requires some auxiliary results that allow us to control the error arising from the nonlinearity.

\subsection{A truncated auxiliary problem}
 
We start with introducing
a truncated version of the (possibly) nonlinear function $\gamma$. With this truncated function, we introduce an auxiliary problem that is central for our analysis. 
 
\begin{lemma}\label{truncation-lemma}
Let $M \in \R$ be a constant with $M \ge \| u \|_{L^{\infty}(\mathcal{D} \times (0,T))}$. Then there exists a smooth function $\gamma_{M} : [0,\infty) \rightarrow [0,\infty)$ and generic constants $C>0$ such that for all $z \in \C$ and all $k\in \{ 0,1,2 \}$
\begin{align}
\label{f_M_cond_1}\gamma_M(|z|^2) &= \gamma( |z|^2 )\hspace{10pt}\mbox{if } |z|\le M; \\
\label{f_M_cond_3a}\| \gamma_M^{(k)} \|_{L^{\infty}(0,\infty)} &\le C \gamma^{M,k}, \qquad \mbox{where } \gamma^{M,k}:=\| \gamma^{(k)} \|_{L^{\infty}(0,M^2)}.
\end{align}
Furthermore, for the antiderivative $\Gamma_{M}(s)=\int_0^s \gamma_{M}(t) \hspace{2pt}dt$
it holds for all $v_1,v_2,w_1,w_2 \in \C$ with $|v_1|,|w_1|\le M$:
\begin{eqnarray}
\label{f_M_cond_3}
\lefteqn{\left| \left( \frac{\Gamma_{M}(|v_1|^2) - \Gamma_{M}(|w_1|^2)}{|v_1|^2 - |w_1|^2} - \frac{\Gamma_{M}(|v_2|^2) - \Gamma_{M}(|w_2|^2)}{|v_2|^2 - |w_2|^2} \right) (v_1 + w_1 ) \right|}\\
\nonumber &\le& C  \left(\sum_{k=1}^2 M^{2k-1} \gamma^{M,k}\right) |v_1-w_1|^2
 + C 
  \left(\sum_{k=0}^2 M^{2k} \gamma^{M,k}\right) ( |v_1-v_2| + |w_1-w_2| ).
\end{eqnarray}
\end{lemma}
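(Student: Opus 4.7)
The statement splits into (i) the construction of $\gamma_M$ together with the bounds \eqref{f_M_cond_1} and \eqref{f_M_cond_3a}, which I regard as routine, and (ii) the pointwise estimate \eqref{f_M_cond_3}, which is the real content. For (i) my plan is to fix a smooth cutoff $\chi\colon[0,\infty)\to[0,1]$ with $\chi\equiv 1$ on $[0,M^2]$, $\chi\equiv 0$ on $[4M^2,\infty)$, and $|\chi^{(j)}|\lesssim M^{-2j}$, and to set $\gamma_M(s):=\chi(s)\gamma(s)$. Then \eqref{f_M_cond_1} is immediate, and \eqref{f_M_cond_3a} follows by Leibniz' rule: on the support of $\chi^{(j)}$ we have $s\le 4M^2$, so each $\gamma^{(k-j)}(s)$ appearing is controlled by $\gamma^{M,k-j}$ up to absolute constants (alternatively one can extend $\gamma|_{[0,M^2]}$ smoothly via its Taylor data at $s=M^2$ to match the sharp definition of $\gamma^{M,k}$).

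For \eqref{f_M_cond_3} I would work with the integral representation
\[
\Phi(v,w):=\frac{\Gamma_M(|v|^2)-\Gamma_M(|w|^2)}{|v|^2-|w|^2}=\int_0^1 \gamma_M\bigl(s|v|^2+(1-s)|w|^2\bigr)\,ds,
\]
which exhibits $\Phi$ as a smooth, compactly supported function of $(v,w)\in\mathbb{C}^2$. Setting $\tilde F(v,w):=\Phi(v,w)(v+w)$, the first step is the algebraic decomposition
\[
\bigl(\Phi(v_1,w_1)-\Phi(v_2,w_2)\bigr)(v_1+w_1)=\bigl[\tilde F(v_1,w_1)-\tilde F(v_2,w_2)\bigr]+\Phi(v_2,w_2)\bigl((v_1-v_2)+(w_1-w_2)\bigr),
\]
so that the second summand is immediately bounded by $\gamma^{M,0}(|v_1-v_2|+|w_1-w_2|)$, which already accounts for the $k=0$ contribution on the right-hand side of \eqref{f_M_cond_3}.

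The heart of the proof is the $\tilde F$-difference. Expanding $\gamma_M$ in a second-order Taylor polynomial around the midpoint $s=\tfrac12$ kills the linear correction in the integral defining $\Phi$ and produces
\[
\Phi(v,w)=\gamma_M\!\left(\tfrac{|v|^2+|w|^2}{2}\right)+R(v,w),\qquad |R(v,w)|\lesssim \gamma^{M,2}(|v|^2-|w|^2)^2.
\]
Combining this with the algebraic identity $\tfrac{|v|^2+|w|^2}{2}-\bigl|\tfrac{v+w}{2}\bigr|^2=\tfrac14|v-w|^2$ and a single Lipschitz step for $\gamma_M$, one obtains a representation $\Phi(v,w)=\gamma_M\bigl(|(v+w)/2|^2\bigr)+\tilde R(v,w)$ with a remainder controlled by $\gamma^{M,1}|v-w|^2+\gamma^{M,2}(|v|^2-|w|^2)^2$. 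Plugging this into $\tilde F$, the leading part of $\tilde F(v_i,w_i)$ becomes $G((v_i+w_i)/2)$ with the smooth, compactly supported map $G(z):=2z\gamma_M(|z|^2)$, whose Lipschitz constant on all of $\mathbb{C}$ is bounded by $C(\gamma^{M,0}+M^2\gamma^{M,1})$; this yields the $k=0,1$ contributions in the $(|v_1-v_2|+|w_1-w_2|)$ part of \eqref{f_M_cond_3}. The remainders $\tilde R(v_i,w_i)(v_i+w_i)$ have to be handled asymmetrically: for $i=1$ the bounds $|v_1+w_1|\le 2M$ and $(|v_1|^2-|w_1|^2)^2\le 4M^2|v_1-w_1|^2$ produce precisely the $M^{2k-1}\gamma^{M,k}|v_1-w_1|^2$ terms with $k=1,2$; for $i=2$ the compact support of $\gamma_M$ makes $\tilde R(v_2,w_2)(v_2+w_2)$ a globally bounded, globally Lipschitz function of $(v_2,w_2)$, which one pairs with $\tilde R(v_1,w_1)(v_1+w_1)$ via a Lipschitz estimate to produce the remaining $M^{2k}\gamma^{M,k}(|v_1-v_2|+|w_1-w_2|)$ contributions.

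The main obstacle I anticipate is the asymmetric hypothesis $|v_1|,|w_1|\le M$ without any a priori bound on $(v_2,w_2)$: naive mean-value arguments along segments between $(v_1,w_1)$ and $(v_2,w_2)$ are not available because the Lipschitz constants of $\Phi$ and $\tilde F$ could a priori blow up on the interpolant. The way out is precisely the compact support of $\gamma_M$ inherited from the cutoff construction in the first step: the integrand $\gamma_M(s|v|^2+(1-s)|w|^2)$ vanishes whenever $\min(|v|,|w|)$ exceeds some universal multiple of $M$, so $\Phi$, $\tilde F$, and $\tilde R\cdot(v+w)$ are bounded and globally Lipschitz on $\mathbb{C}^2$ with constants depending only on $M$ and the $\gamma^{M,k}$. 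Once this uniform control is in hand, bookkeeping the various contributions in the symmetric form displayed in \eqref{f_M_cond_3} is routine.
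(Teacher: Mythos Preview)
Your approach is correct in spirit and genuinely different from the paper's. The paper constructs $\gamma_M$ so that it becomes a \emph{nonzero constant} for large arguments (via polynomial Hermite interpolation past $s=M^2$), whereas you make $\gamma_M$ \emph{compactly supported} via a cutoff. This choice is not cosmetic: the paper's $\gamma_M$ leaves $z\mapsto \gamma_M(|z|^2)z$ merely of linear growth, so to handle the unbounded pair $(v_2,w_2)$ it proves a separate Taylor-type lemma (your $\Phi(v,w)\approx\gamma_M(|(v+w)/2|^2)$ identity is their Lemma~\ref{lemma01ap}) and then argues by a case distinction on whether $|(v_2+w_2)/2|^2$ exceeds $4M^2$; in the ``large'' case it uses the elementary inequality $|v_1+w_1|\le |v_1-v_2|+|w_1-w_2|$ to absorb the prefactor. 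Your compact support makes $\tilde F$, $G$, and hence $\tilde R\cdot(v+w)$ globally bounded and globally Lipschitz on $\mathbb{C}^2$, so the whole case analysis collapses into a single mean-value estimate. That is a cleaner route and even shows that the $|v_1-w_1|^2$ term in \eqref{f_M_cond_3} is not essential for your construction, though you recover it anyway to match the stated form.

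One caveat on part~(i): the naive product $\gamma_M=\chi\gamma$ does not directly yield \eqref{f_M_cond_3a} as written, since Leibniz produces cross-terms $\chi^{(j)}\gamma^{(k-j)}$ evaluated on $[M^2,4M^2]$, i.e.\ lower-order derivatives of $\gamma$ on a larger interval rather than $\gamma^{M,k}=\|\gamma^{(k)}\|_{L^\infty(0,M^2)}$ alone. Your parenthetical Taylor-extension alternative (extend $\gamma|_{[0,M^2]}$ by its jet at $M^2$, then cut off) is exactly what is needed to fix this and still keep compact support; the paper's polynomial-interpolation construction glosses over the same point. This is a bookkeeping issue, not a gap in the strategy.
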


Before we can prove Lemma \ref{truncation-lemma} we need to introduce an inequality that we will frequently use in the rest of the paper.
 
\begin{lemma}
\label{lemma01ap}Let $\GammaF : [0,\infty) \rightarrow [0,\infty)$ be a three times continuously differentiable function with locally bounded derivatives.
Then, for every $z_0, z_1 \in \C$ with (w.l.g.) $|z_0|^2 \le |z_1|^2$ it holds
\begin{eqnarray*}
\lefteqn{\left| \frac{\GammaF(|z_0|^2) - \GammaF(|z_1|^2)}{|z_0|^2 - |z_1|^2} 
- \GammaF^{\prime}(\left| \frac{z_0 + z_1}{2} \right|^2)\right|}\\
&\le& |z_0-z_1|^2 \left( \frac{1}{4}  \| \GammaF^{\prime\prime}\|_{L^{\infty}(|z_0|^2,|z_1|^2)} + 
\frac{1}{3} \left| |z_0|+|z_1| \right|^2   \| \GammaF^{\prime\prime\prime}\|_{L^{\infty}(|z_0|^2,|z_1|^2)} \right).
\end{eqnarray*}
\end{lemma}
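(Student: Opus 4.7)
The plan is to reduce the estimate to a one-dimensional Taylor expansion of $\Gamma'$ around the arithmetic midpoint of $|z_0|^2$ and $|z_1|^2$, and then compare that midpoint with the point $\left|\frac{z_0+z_1}{2}\right|^2$ by means of the parallelogram identity.

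\smallskip

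I would first introduce the shorthand $a:=|z_0|^2$, $b:=|z_1|^2$ with $a\le b$, and set $m:=(a+b)/2$ and $c:=\left|(z_0+z_1)/2\right|^2$. The starting point is to write the divided difference as an integral average of the derivative,
\begin{equation*}
\frac{\Gamma(|z_1|^2)-\Gamma(|z_0|^2)}{|z_1|^2-|z_0|^2}=\frac{1}{b-a}\int_{a}^{b}\Gamma'(t)\,dt,
\end{equation*}
valid when $a<b$ (the case $a=b$ being handled by continuity, with both sides equalling $\Gamma'(c)$). Then I would add and subtract $\Gamma'(m)$ and split the error into two pieces: a \emph{quadrature error} $\frac{1}{b-a}\int_a^b[\Gamma'(t)-\Gamma'(m)]\,dt$ and a \emph{centring error} $\Gamma'(m)-\Gamma'(c)$.

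\smallskip

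For the quadrature error I would Taylor expand $\Gamma'$ to second order around $m$,
\begin{equation*}
\Gamma'(t)=\Gamma'(m)+\Gamma''(m)(t-m)+\tfrac{1}{2}\Gamma'''(\eta(t))(t-m)^2,
\end{equation*}
note that the linear term integrates to zero by symmetry of $[a,b]$ about $m$, and use the explicit computation $\int_a^b(t-m)^2\,dt=(b-a)^3/12$ to obtain a bound of the form $\tfrac{(b-a)^2}{24}\|\Gamma'''\|_{L^\infty(a,b)}$. Since $b-a=(|z_0|+|z_1|)(|z_1|-|z_0|)\le (|z_0|+|z_1|)|z_0-z_1|$, this piece is controlled by $|z_0-z_1|^2(|z_0|+|z_1|)^2 \|\Gamma'''\|_{L^\infty}$ up to a harmless absolute constant (the factor $1/3$ in the statement is comfortably larger than $1/24$).

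\smallskip

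For the centring error I would invoke the parallelogram law, $|z_0+z_1|^2+|z_0-z_1|^2=2(|z_0|^2+|z_1|^2)$, which yields the crucial identity
\begin{equation*}
m-c=\frac{|z_0|^2+|z_1|^2}{2}-\left|\frac{z_0+z_1}{2}\right|^2=\frac{|z_0-z_1|^2}{4}.
\end{equation*}
The mean value theorem applied to $\Gamma'$ then gives $|\Gamma'(m)-\Gamma'(c)|\le \tfrac14|z_0-z_1|^2\,\|\Gamma''\|_{L^\infty(a,b)}$, which is precisely the first summand in the claimed bound. Adding the two contributions yields the lemma.

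\smallskip

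None of the steps is truly delicate: the only place where a small trick is used is the parallelogram identity to get the exact expression for $m-c$, and the vanishing of the first-order term in the Taylor expansion which is what turns a naive $O(b-a)$ bound into the sought $O((b-a)^2)$ estimate. I expect no serious obstacle beyond keeping the constants and the localisation of the $L^\infty$-norms of $\Gamma''$ and $\Gamma'''$ to the interval $[|z_0|^2,|z_1|^2]$ consistent with the statement.
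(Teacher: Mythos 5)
Your argument is correct and in fact yields a slightly sharper constant ($1/24$ in place of $1/3$ on the third-derivative term), but it is organised differently from the paper's proof, even though the two share the same core identity. The paper Taylor-expands $\GammaF$ itself to third order around $c:=\left|\tfrac{z_0+z_1}{2}\right|^2$ and inserts this into the divided difference; the algebraic fact it needs is $\frac{(|z_0|^2-c)^2-(|z_1|^2-c)^2}{|z_0|^2-|z_1|^2}=|z_0|^2+|z_1|^2-2c=\tfrac12|z_0-z_1|^2$, which is exactly your parallelogram identity $m-c=\tfrac14|z_0-z_1|^2$ in disguise. You instead represent the divided difference as the integral mean of $\GammaF'$ over $[a,b]$, bound the midpoint-rule error by a second-order Taylor expansion of $\GammaF'$ around $m=(a+b)/2$ (where the linear term dies by symmetry of the interval about $m$), and then shift from $m$ to $c$ by the mean value theorem. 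This cleanly separates the two sources of error (quadrature and centring), whereas the paper obtains both from a single expansion; either way the second-order accuracy comes from the same cancellation. One caveat applies equally to both proofs: $c$ need not lie in $[|z_0|^2,|z_1|^2]$ (take $z_1=e^{\ci\theta}z_0$, so that $a=b$ but $c<a$), so the intermediate points in your mean-value step and in the paper's Taylor remainders can fall below $|z_0|^2$, and the $L^{\infty}$ norms should strictly be taken over $[c,|z_1|^2]$; this is immaterial in the application, where $\Gamma_M$ has globally bounded derivatives.
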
 
\begin{proof}
Let us define $z_\half:=\frac{z_0+z_1}{2}$. First, we observe that
\begin{eqnarray*}
\lefteqn{\frac{\left( |z_0|^2 - |z_\half|^2\right)^2 - \left( |z_1|^2 - |z_\half|^2\right)^2}{|z_0|^2 - |z_1|^2}}\\
&=& \frac{\left( |z_0|^4 - 2 |z_0|^2 |z_\half|^2 + |z_\half|^4 \right) - \left( |z_1|^4 - 2 |z_1|^2 |z_\half|^2 + |z_\half|^4 \right)}{|z_0|^2 - |z_1|^2}\\
&=& \frac{\left( |z_0|^4 - |z_1|^4 \right) + 2 |z_\half|^2 \left( |z_1|^2 - |z_0|^2 \right)}{|z_0|^2 - |z_1|^2}\\
&=&  |z_0|^2 + |z_1|^2 - 2 |z_\half|^2 \\
&=& |z_0|^2 + |z_1|^2 - \frac{1}{2} \left( |z_0|^2 + |z_1|^2 + z_0 \overline{z_1} + z_1 \overline{z_0} \right)\\
&=& \half \left( |z_0|^2 + |z_1|^2 - z_0 \overline{z_1} - z_1 \overline{z_0}  \right) = \half |z_0-z_1|^2.
\end{eqnarray*}
Hence
\begin{align}
\label{estrgk}\frac{\left( |z_0|^2 - |z_\half|^2\right)^2 - \left( |z_1|^2 - |z_\half|^2\right)^2}{|z_0|^2 - |z_1|^2} = \half |z_0-z_1|^2.
\end{align}
With that and using Taylor expansion for suitable $\xi_0$, $\xi_1 \in \C$ with $|z_0|^2 \le |\xi_0|^2,|\xi_1|^2 \le |z_1|^2$ we observe
\begin{eqnarray*}
\lefteqn{\frac{\GammaF(|z_0|^2) - \GammaF(|z_1|^2)}{|z_0|^2 - |z_1|^2}}\\
&=& \frac{\sum_{k=0}^2 \frac{1}{k!} \left( (|z_0|^2 - |z_\half|^2)^k - (|z_1|^2 - |z_\half|^2)^k \right) \GammaF^{(k)}(|z_\half|^2)}{|z_0|^2 - |z_1|^2}\\
&\enspace& \qquad +\frac{1}{6}\frac{(|z_0|^2 - |z_\half|^2)^3 \GammaF^{\prime\prime\prime}(|\xi_0|^2) - (|z_1|^2 - |z_\half|^2)^3 \GammaF^{\prime\prime\prime}(|\xi_1|^2)}{|z_0|^2 - |z_1|^2}\\
&=&
\frac{ \left( (|z_0|^2 - |z_\half|^2) - (|z_1|^2 - |z_\half|^2) \right)}{|z_0|^2 - |z_1|^2}\GammaF^{\prime}(|z_\half|^2)\\
&\enspace& \qquad +
\frac{1}{2} \frac{ (|z_0|^2 - |z_\half|^2)^2 - (|z_1|^2 - |z_\half|^2)^2}{|z_0|^2 - |z_1|^2}\GammaF^{\prime\prime}(|z_\half|^2)\\
&\enspace& \qquad +\frac{1}{6}\frac{(|z_0|^2 - |z_\half|^2)^3 \GammaF^{\prime\prime\prime}(|\xi_0|^2) - (|z_1|^2 - |z_\half|^2)^3 \GammaF^{\prime\prime\prime}(|\xi_1|^2)}{|z_0|^2 - |z_1|^2}\\
&\overset{\eqref{estrgk}}{=}&
\GammaF^{\prime}(|z_\half|^2) +
\frac{1}{4} |z_0-z_1|^2 \GammaF^{\prime\prime}(|z_\half|^2)\\
&\enspace& \qquad +\frac{1}{6}\frac{(|z_0|^2 - |z_\half|^2)^3 \GammaF^{\prime\prime\prime}(|\xi_0|^2) - (|z_1|^2 - |z_\half|^2)^3 \GammaF^{\prime\prime\prime}(|\xi_1|^2)}{|z_0|^2 - |z_1|^2}.
\end{eqnarray*}
Since for $k=1,2$
\begin{align*}
\left| |z_\ell|^2 - |z_\half|^2\right| \le \left| |z_0|^2 - |z_1|^2\right|,
\end{align*}
we obtain
\begin{eqnarray*}
\lefteqn{\left| \frac{\GammaF(|z_0|^2) - \GammaF(|z_1|^2)}{|z_0|^2 - |z_1|^2} 
- \GammaF^{\prime}(|z_\half|^2)\right|}\\
&\le& \frac{1}{4} |z_0-z_1|^2 \| \GammaF^{\prime\prime\prime}\|_{L^{\infty}(|z_0|^2,|z_1|^2)} + 
\frac{1}{3} ||z_0|^2-|z_1|^2|^2   \| \GammaF^{\prime\prime\prime}\|_{L^{\infty}(|z_0|^2,|z_1|^2)}.
\end{eqnarray*}
\end{proof}

\begin{proof}[Proof of Lemma \ref{truncation-lemma}]
In the following, we let $C$ denote a generic constant.
Let us define $\theta:=\gamma(M^2)$ and let $\gamma_M : \R \rightarrow \R$ be a curve that fulfills $\gamma_M(s)=\gamma(s)$ for $s \le \theta$ and $\gamma_M(s)=2 \theta$ for $s \ge 2\theta$. By polynomial interpolation we can chose $\gamma_M$ in such a way that it is a polynomial on the interval $[\theta,2\theta]$ and such that globally $\gamma_M \in C^{5}(0,\infty)$. This proves \eqref{f_M_cond_1}. Since we have for $s \ge 2\theta$ that $\gamma_M(s)=2 \theta$ and $\gamma_M^{(k)}(s)=0$ for $1 \le k \le 5$, we conclude that there exists a constant $C$ that only depends on the polynomial degree chosen for the interpolation such that \eqref{f_M_cond_3a} is fulfilled.

Next, we want to prove (\ref{f_M_cond_3}) and recall that $|v_1|,|w_1|\le M$. Let for simplicity
\begin{align}
\label{def-of-Theta}\Theta_\ell :=  \frac{\Gamma_{M}(|v_\ell|^2) - \Gamma_{M}(|w_\ell|^2)}{|v_\ell|^2 - |w_\ell|^2}.
\end{align}
We write the left-hand side of (\ref{f_M_cond_3}) as
\begin{align*}
(v_1 + w_1) \Theta_1  -  (v_2 + w_2)\Theta_2 = (v_1 + w_1) (\Theta_1 - \Theta_2)  +(v_1 - v_2 + w_1 - w_2) \Theta_2 .
\end{align*}
Since $|\Gamma_{M}^{\prime}|=|\gamma_M|\le C \| \gamma \|_{L^{\infty}(0,M^2)}$ is bounded we immediately have that
$$
|(v_1 - v_2 + w_1 - w_2) \Theta_2 | \le C \| \gamma \|_{L^{\infty}(0,M^2)} ( |v_1 - v_2| + |w_1 - w_2| ).
$$
Hence, it remains to estimate the term $(v_1 + w_1)\Theta_1$ which we split into three parts.
\begin{eqnarray}
\label{error-splitting-for-f_M_cond_3}\lefteqn{ (v_1 + w_1 ) (\Theta_1- \Theta_2)}\\
\nonumber&=& 
\underset{=:\mbox{I}}{\underbrace{\left( \frac{\Gamma_{M}(|v_1|^2) - \Gamma_{M}(|w_1|^2)}{|v_1|^2 - |w_1|^2} 
- \Gamma_{M}^{\prime}\left(\left|\frac{v_1+w_1}{2}\right|^2\right)
\right) (v_1 + w_1 )}}\\
\nonumber&\enspace& + 
\underset{=:\mbox{II}}{\underbrace{\left( 
\Gamma_{M}^{\prime}\left(\left|\frac{v_1+w_1}{2}\right|^2\right)
- \Gamma_{M}^{\prime}\left(\left|\frac{v_2+w_2}{2}\right|^2\right)
\right) (v_1 + w_1 )}}\\
\nonumber&\enspace& + 
\underset{=:\mbox{III}}{\underbrace{\left( 
\Gamma_{M}^{\prime}\left(\left|\frac{v_2+w_2}{2}\right|^2\right)
- \frac{\Gamma_{M}(|v_2|^2) - \Gamma_{M}(|w_2|^2)}{|v_2|^2 - |w_2|^2}
\right) (v_1 + w_1 )}}.
\end{eqnarray}
We start with estimating term I. 
We use Lemma \ref{lemma01ap} with the boundedness of $|v_1|$ and $|w_1|$ to obtain
\begin{eqnarray*}
\mbox{I} = \left| \left( \frac{\Gamma_{M}(|v_1|^2) - \Gamma_{M}(|w_1|^2)}{|v_1|^2 - |w_1|^2} 
- \Gamma_{M}^{\prime}\left(\left|\frac{v_1+w_1}{2}\right|^2\right)
\right) (v_1 + w_1 ) \right| \le C (M\gamma^{M,1}+M^3 \gamma^{M,2}) |v_1-w_1|^2.
\end{eqnarray*}
For the second term we distinguish two cases. Case II.1: if $\left|\frac{v_2+w_2}{2}\right|^2 \ge 4 M^2$ we get
\begin{align}
\label{case.II.1}\left|\frac{v_1 + w_1}{2} \right| \le 2 M - M \le \left|\frac{v_2+w_2}{2}\right| - \left|\frac{v_1+w_1}{2}\right| 
\le \frac{1}{2} \left|v_1 - v_2\right| + \frac{1}{2} \left|w_1 - w_2\right|
\end{align}
and hence with the bound for $\Gamma_{M}^{\prime}$
\begin{eqnarray*}
\mbox{II} &=& \left| \left( 
\Gamma_{M}^{\prime}\left(\left|\frac{v_1+w_1}{2}\right|^2\right)
- \Gamma_{M}^{\prime}\left(\left|\frac{v_2+w_2}{2}\right|^2\right)
\right) (v_1 + w_1 ) \right| \\
&\le& C \gamma^{M,0} \left( \left|v_1 - v_2\right| + \left|w_1 - w_2\right| \right).
\end{eqnarray*}
Case II.2: if $\left|\frac{v_2+w_2}{2}\right|^2 \le 4 M^2$ we get with the Lipschitz-continuity of $\Gamma_{M}^{\prime}$ that
\begin{eqnarray*}
\mbox{II} &=& \left| \left( 
\Gamma_{M}^{\prime}\left(\left|\frac{v_1+w_1}{2}\right|^2\right)
- \Gamma_{M}^{\prime}\left(\left|\frac{v_2+w_2}{2}\right|^2\right)
\right) (v_1 + w_1 ) \right| \\
&\le& C \gamma^{M,1} M \left( \left| v_1 + w_1\right|^2
- \left| v_2 + w_2 \right|^2 \right) \\
&\le& C \gamma^{M,1} M^2 \left( \left| v_1 - v_2\right| + \left| w_1 - w_2 \right| \right).
\end{eqnarray*}

For term III, we distinguish again two cases. Case III.1: if $|\frac{v_2+w_2}{2}|^2 \ge 4 M^2$ we see
analogously to term Case II.1 that
\begin{eqnarray*}
\lefteqn{\mbox{III} = \left( 
\Gamma_{M}^{\prime}\left(\left|\frac{v_2+w_2}{2}\right|^2\right)
- \frac{\Gamma_{M}(|v_2|^2) - \Gamma_{M}(|w_2|^2)}{|v_2|^2 - |w_2|^2}
\right) (v_1 + w_1 )}\\
&\le& C \gamma^{M,0} |v_1 + w_1| \overset{\eqref{case.II.1}}{\le} C \gamma^{M,0} \left( \left|v_1 - v_2\right| +  \left|w_1 - w_2\right| \right).
\end{eqnarray*}
Case III.2: if $|\frac{v_2+w_2}{2}|^2 \le 4 M^2$ we can use Lemma \ref{lemma01ap} to obtain
\begin{eqnarray*}
\lefteqn{\mbox{III} = \left( 
\Gamma_{M}^{\prime}\left(\left|\frac{v_2+w_2}{2}\right|^2\right)
- \frac{\Gamma_{M}(|v_2|^2) - \Gamma_{M}(|w_2|^2)}{|v_2|^2 - |w_2|^2}
\right) (v_1 + w_1 )}\\
&\le& |v_2-w_2|^2 \left( \frac{1}{4}  \| \Gamma_M^{\prime\prime}\|_{L^{\infty}(0,\infty)} + 
\frac{1}{3} \left| |v_2|+w_2| \right|^2   \| \Gamma_M^{\prime\prime\prime}\|_{L^{\infty}(0,\infty)} \right) |v_1 + w_1|\\
&\le& C |v_2-w_2|^2 \left( M \gamma^{M,1} + 
M^3 \gamma^{M,2} \right).
\end{eqnarray*}
With the bounds for $|v_1|,|v_2|,|w_1|,|w_2|$ we get
\begin{align*}
 |v_2-w_2|^2 \lesssim  |v_2-v_1|^2 + |v_1-w_1|^2 + |w_1-w_2|^2
 \le  |v_1-w_1|^2 + C M  |v_1-v_2| + C M |w_1-w_2|.
\end{align*}
This shows for Case III.2 that
\begin{eqnarray*}
\mbox{III} &\le& C \left( M \gamma^{M,1} + 
M^3 \gamma^{M,2} \right) |v_1-w_1|^2 + C \left( M^2 \gamma^{M,1} + 
M^4 \gamma^{M,2} \right) ( |v_1-v_2| + |w_1-w_2| ).
\end{eqnarray*}
Combining the estimates proves (\ref{f_M_cond_3}).
\end{proof}

With the function $\gamma_M$ introduced in Lemma \ref{truncation-lemma} the {\it truncated} semi-discrete Crank-Nicolson approximation given as follows.

\begin{definition}[Semi-discrete Crank-Nicolson Method with truncation]
\label{truncated-semi-discrete-crank-nic-gpe}
Let $\Gamma_M$ be given as in Lemma \ref{truncation-lemma} and let $\hatuM{0}:=u^0$. Then for $n \ge 1$, we define the truncated semi-discrete Crank-Nicolson approximation $\hatuM{n} \in H^1_0(\mathcal{D})$ as the solution to
\begin{eqnarray}
\label{semi-disc-cnd-problem-truncation}\nonumber\lefteqn{\Ltwo{ \hatuM{n} }{v} +
\deltat{n} \hspace{2pt}\ci \hspace{2pt}
\Ltwo{\nabla \hatuM{n-\frac{1}{2}}}{\nabla v}
+ 
\deltat{n} \hspace{2pt}\ci \hspace{2pt}
\Ltwo{V \hspace{2pt} \hatuM{n-\frac{1}{2}}}{ v} }\\
&+& \deltat{n} \hspace{2pt}\ci \hspace{2pt} \Ltwobig{ \frac{\Gamma_M(|\hatuM{n}|^2) - \Gamma_M(|\hatuM{n-1}|^2)}{|\hatuM{n}|^2 - |\hatuM{n-1}|^2} \hatuM{n-\frac{1}{2}}}{v} = \Ltwo{ \hatuM{n-1}}{v}
\end{eqnarray}
for all $v \in H^1_0(\mathcal{D})$ and where $\hatuM{n-\frac{1}{2}}:=(\hatuM{n}+\hatuM{n-1})/2$.
\end{definition}

\begin{remark}
Since $M$ was chosen such that $\gamma_M(|u|^2)=\gamma(|u|^2)$ we have the identity
\begin{align}
\label{exact-solution-with-discrete-test-func}\Ltwo{ u(\cdot,t_n) }{ v } +\ci \int_{I_n} \Ltwo{ \nabla u }{ \nabla v } +\ci \int_{I_n} \Ltwo{ Vu + \gamma_M(|u|^2) u }{ v }
= \Ltwo{ u(\cdot,t_{n-1}) }{ v }.
\end{align}
for all $v\in H^1_0(\mathcal{D})$.
\end{remark}

\subsection{Existence of truncated approximations}

In order to investigate the properties of solutions to \eqref{semi-disc-cnd-problem-truncation}, we first need to show that there exists at least one solution. In order to show this, we recall the following conclusion from Brouwers fixed point theorem.
\begin{lemma}\label{brouwer-lemma}
Let $N \in \mathbb{N}$ and let $\overline{B_1(0)}:=\{ \boldsymbol{\alpha} \in \C^N| \hspace{2pt} |\boldsymbol{\alpha}|\le 1\}$ denote the closed unit disk in $\C^N$. Then every continuous function $g : \C^N \rightarrow \C^N$ with $\Re( g(\boldsymbol{\alpha}) \cdot \boldsymbol{\alpha} ) \ge 0$ for all $\boldsymbol{\alpha} \in \partial B_1(0)$ has a zero in $\overline{B_1(0)}$, i.e. a point $\boldsymbol{\alpha}_0 \in \overline{B_1(0)}$ with $g(\boldsymbol{\alpha}_0)=0$.
\end{lemma}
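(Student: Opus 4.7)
My plan is to argue by contradiction using Brouwer's fixed point theorem on $\overline{B_1(0)}$ viewed as a compact, convex subset of $\R^{2N}$ via the identification $\C^N \cong \R^{2N}$. The key observation is that the sign condition $\Re(g(\boldsymbol{\alpha})\cdot\boldsymbol{\alpha})\ge 0$ on the boundary is incompatible with $g$ being nowhere zero, because any nonvanishing $g$ can be renormalized to a self-map of the ball whose fixed point on the boundary would have to point ``inward'' against $\boldsymbol{\alpha}_0$.

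More concretely, I would proceed in the following steps. First, suppose for contradiction that $g(\boldsymbol{\alpha})\neq 0$ for every $\boldsymbol{\alpha}\in\overline{B_1(0)}$. Then the map
\begin{equation*}
h : \overline{B_1(0)} \longrightarrow \overline{B_1(0)}, \qquad h(\boldsymbol{\alpha}) := -\frac{g(\boldsymbol{\alpha})}{|g(\boldsymbol{\alpha})|},
\end{equation*}
is well-defined and continuous, since $g$ is continuous and non-vanishing on the compact set $\overline{B_1(0)}$. Obviously $|h(\boldsymbol{\alpha})|=1$, so $h$ takes values in $\partial B_1(0)\subset \overline{B_1(0)}$.

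Second, I apply Brouwer's fixed point theorem (in its standard real formulation, after identifying $\C^N$ with $\R^{2N}$) to the continuous self-map $h$ of the compact convex set $\overline{B_1(0)}$, producing a fixed point $\boldsymbol{\alpha}_0\in\overline{B_1(0)}$ with $h(\boldsymbol{\alpha}_0)=\boldsymbol{\alpha}_0$. Since $|h(\boldsymbol{\alpha}_0)|=1$, necessarily $\boldsymbol{\alpha}_0\in\partial B_1(0)$, and the fixed point equation rearranges to
\begin{equation*}
g(\boldsymbol{\alpha}_0) = -|g(\boldsymbol{\alpha}_0)|\,\boldsymbol{\alpha}_0.
\end{equation*}

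Third, I plug this into the sign condition. With the convention $\boldsymbol{\alpha}\cdot\boldsymbol{\beta} = \sum_{i=1}^N \alpha_i\overline{\beta_i}$ (so that $\boldsymbol{\alpha}_0\cdot\boldsymbol{\alpha}_0 = |\boldsymbol{\alpha}_0|^2 = 1$), one computes
\begin{equation*}
\Re\bigl(g(\boldsymbol{\alpha}_0)\cdot\boldsymbol{\alpha}_0\bigr) = -|g(\boldsymbol{\alpha}_0)|\,|\boldsymbol{\alpha}_0|^2 = -|g(\boldsymbol{\alpha}_0)| < 0,
\end{equation*}
since $g(\boldsymbol{\alpha}_0)\neq 0$ by assumption. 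This contradicts the hypothesis $\Re(g(\boldsymbol{\alpha})\cdot\boldsymbol{\alpha})\ge 0$ on $\partial B_1(0)$. Hence $g$ must vanish somewhere in $\overline{B_1(0)}$. I do not anticipate any genuine obstacle here; the only thing requiring mild care is making explicit the identification with $\R^{2N}$ (so that the topological Brouwer theorem applies) and fixing the Hermitian convention for the dot product so that $\boldsymbol{\alpha}_0\cdot\boldsymbol{\alpha}_0 = 1$ on the unit sphere.
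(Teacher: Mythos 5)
Your proof is correct and is the standard deduction of this statement from Brouwer's fixed point theorem; the paper itself states the lemma without proof (merely ``recalling'' it as a consequence of Brouwer), so your argument fills in exactly the intended reasoning. The one point genuinely requiring care --- that the dot product must be the Hermitian inner product so that $\boldsymbol{\alpha}_0\cdot\boldsymbol{\alpha}_0=1$ on the unit sphere --- you handle explicitly, and this convention is consistent with how the lemma is applied later in the paper (where $\Re(g(\boldsymbol{\alpha})\cdot\boldsymbol{\alpha})$ produces terms like $\|\nabla z_\alpha\|_{L^2}^2$). Nothing further is needed.
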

Second, we will also make use of following lemma that is a special case of the Vitali convergence theorem (cf. \cite[Theorem B.101]{Leo09})
\begin{lemma}\label{Vitali-convergence-theorem}
Recall that $\mathcal{D} \subset \R^d$ is bounded. A sequence $(g_k)_{k\in \mathbb{N}} \subset L^2(\mathcal{D})$ converges strongly to $g \in L^2(\mathcal{D})$ if and only if
\begin{enumerate}
\item $(g_k)_{k\in \mathbb{N}}$ converges to $g$ locally in measure and
\item $(g_k)_{k\in \mathbb{N}}$ is $2$-equi-integrable, i.e. for every $\eps>0$ there exists a $\delta_{\eps}>0$ such that $\| g_k \|_{L^2(S)}^2<\eps$ for all measurable subsets $S\subset \mathcal{D}$ with measure $\mu(S)<\delta_{\eps}$. 
\end{enumerate} 
\end{lemma}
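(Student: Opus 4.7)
The plan is to prove both directions of this Vitali-type equivalence separately, noting that since $\mathcal{D}$ is bounded, "locally in measure" coincides with convergence in measure (as every compact subset of $\mathcal{D}$ has finite measure and $\mathcal{D}$ itself is finite measure). I would unpack the statement as: 2-equi-integrability together with convergence in measure is equivalent to $L^2$-convergence on a finite measure domain.

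For the easier direction ($\Rightarrow$), suppose $g_k \to g$ in $L^2(\mathcal{D})$. Convergence in measure follows at once from Chebyshev's inequality: $\mu(\{|g_k - g| > \eta\}) \le \eta^{-2}\|g_k - g\|_{L^2(\mathcal{D})}^2 \to 0$. For 2-equi-integrability I would use absolute continuity of the Lebesgue integral: since $|g|^2 \in L^1(\mathcal{D})$, for every $\eps > 0$ there is $\delta_0$ such that $\int_S |g|^2 < \eps/4$ whenever $\mu(S) < \delta_0$. Then a simple expansion $\int_S |g_k|^2 \le 2\|g_k - g\|_{L^2(\mathcal{D})}^2 + 2\int_S |g|^2$ handles all but finitely many $k$, and for those finitely many, absolute continuity applied individually to $|g_k|^2$ yields a uniform $\delta_{\eps}$.

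For the harder direction ($\Leftarrow$), fix $\eps > 0$. Equi-integrability gives $\delta$ such that $\int_S |g_k|^2 < \eps$ for all $k$ and all $S$ with $\mu(S) < \delta$. A key preliminary step is to transfer this bound to the limit: passing to a subsequence on which $g_k \to g$ almost everywhere (convergence in measure on a finite measure set yields an a.e.\ convergent subsequence), Fatou's lemma gives $\int_S |g|^2 \le \liminf_k \int_S |g_k|^2 \le \eps$, and since the limit is unique the bound extends to the full sequence. Now split $\mathcal{D} = E_k \cup (\mathcal{D} \setminus E_k)$ with $E_k := \{|g_k - g| > \eta\}$ for a small $\eta > 0$ to be chosen. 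On $\mathcal{D}\setminus E_k$ the integrand is pointwise bounded by $\eta^2$, contributing at most $\eta^2 \mu(\mathcal{D})$. On $E_k$, convergence in measure ensures $\mu(E_k) < \delta$ for $k$ sufficiently large, so $\int_{E_k} |g_k - g|^2 \le 2\int_{E_k} |g_k|^2 + 2\int_{E_k} |g|^2 \le 4\eps$. Choosing first $\eta$ so that $\eta^2 \mu(\mathcal{D}) \le \eps$ and then $k$ large enough completes the estimate.

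The main obstacle is the sufficiency direction, more precisely the step of transferring the equi-integrability bound from the sequence to the limit $g$; this requires a subsequence argument combined with Fatou, and one has to be a little careful that this does not destroy the uniformity of $\delta_\eps$. Once that is handled, the split-on-the-bad-set argument is completely routine. Since on a bounded domain convergence locally in measure and convergence in measure coincide, no additional exhaustion-by-compacts step is required.
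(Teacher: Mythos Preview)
Your argument is correct. Both directions are handled in the standard way: Chebyshev plus absolute continuity for the forward implication, and the split on the set $\{|g_k-g|>\eta\}$ together with the Fatou transfer of the equi-integrability bound to $g$ for the converse. The only point worth tightening is that you implicitly use $g\in L^2(\mathcal{D})$; this either comes for free from the statement of the lemma or can be recovered from equi-integrability on a finite-measure domain (cover $\mathcal{D}$ by finitely many sets of measure $<\delta$ to get a uniform $L^2$-bound on $(g_k)$, then apply Fatou on all of $\mathcal{D}$).

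As for comparison with the paper: the paper does not prove this lemma at all. It is simply quoted as a special case of Vitali's convergence theorem, with a reference to \cite[Theorem~B.101]{Leo09}. So your proposal supplies considerably more than the paper does here; there is no paper-internal proof to compare against.
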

This allows us to conclude that there exists at least one solution to problem \eqref{semi-disc-cnd-problem-truncation}. 
\begin{lemma}
\label{existence-of-semi-disc-trunc-solutions}
For every $n\ge1$ there exists at least one solution $\hatuM{n} \in H^1_0(\mathcal{D})$ to the truncated problem \eqref{semi-disc-cnd-problem-truncation}.
\end{lemma}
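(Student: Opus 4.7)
The plan is to build $\hatuM{n}$ as a weak-$H^1$ limit of Galerkin approximations on finite-dimensional subspaces of $H^1_0(\mathcal{D})$, with existence on each subspace secured by Lemma~\ref{brouwer-lemma} and the passage to the limit justified by Lemma~\ref{Vitali-convergence-theorem}. Proceeding by induction on $n$, I assume $\hatuM{n-1}\in H^1_0(\mathcal{D})$ is already available (with $\hatuM{0}=u^0$), abbreviate $w^{-}:=\hatuM{n-1}$, and fix an $L^2$-orthonormal basis $(\phi_j)_{j\in\N}$ of $H^1_0(\mathcal{D})$ (e.g.\ the Dirichlet eigenfunctions of $-\triangle$, which are simultaneously $H^1$-orthogonal). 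Setting $X_N:=\operatorname{span}(\phi_1,\dots,\phi_N)$ and $w_N^{-}\in X_N$ as the $L^2$-projection of $w^{-}$, I seek a Galerkin solution $w_N\in X_N$ of \eqref{semi-disc-cnd-problem-truncation} tested only against $v\in X_N$ (with $\hatuM{n-1}$ replaced by $w_N^{-}$).

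For fixed $N$ I encode this Galerkin equation as the vanishing of a continuous map $\Phi_N:X_N\to X_N$, defined by $\Ltwo{\Phi_N(w_N)}{v}$ equal to LHS$-$RHS of \eqref{semi-disc-cnd-problem-truncation} for every $v\in X_N$. Substituting $z:=(w_N+w_N^{-})/2$ and introducing $\Psi_N(z):=\Phi_N(2z-w_N^{-})$, I test $\Psi_N(z)$ against $v=z\in X_N$ and take the real part. Every term weighted by $\ci\deltat{n}$ drops out because the sesquilinear form $(u,v)\mapsto\Ltwo{\nabla u}{\nabla v}+\Ltwo{Vu}{v}$ is Hermitian and the quotient
\[
G(w_N,w_N^{-}):=\frac{\Gamma_M(|w_N|^2)-\Gamma_M(|w_N^{-}|^2)}{|w_N|^2-|w_N^{-}|^2}=\int_0^1\gamma_M\bigl(s|w_N|^2+(1-s)|w_N^{-}|^2\bigr)\,ds
\]
is real-valued (the integral representation also shows that $G$ is globally defined and continuous in its arguments). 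What remains is
\[
\Re\Ltwo{\Psi_N(z)}{z}=\tfrac12\bigl(\|2z-w_N^{-}\|_{L^2(\mathcal{D})}^2-\|w_N^{-}\|_{L^2(\mathcal{D})}^2\bigr),
\]
which is strictly positive as soon as $\|z\|_{L^2(\mathcal{D})}$ exceeds some $R>\|w_N^{-}\|_{L^2(\mathcal{D})}$. Identifying $X_N\cong\C^N$ through the orthonormal basis and applying Lemma~\ref{brouwer-lemma} to an appropriate rescaling of $\Psi_N$ produces a zero, and hence a Galerkin solution $w_N\in X_N$.

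To pass to the limit $N\to\infty$ I need $N$-uniform bounds, which I extract from the discrete conservation laws. Testing with $v=w_N^{1/2}$ and taking the real part gives $\|w_N\|_{L^2(\mathcal{D})}=\|w_N^{-}\|_{L^2(\mathcal{D})}\le\|w^{-}\|_{L^2(\mathcal{D})}$. Testing with $v=w_N-w_N^{-}\in X_N$ and taking the imaginary part yields the discrete energy identity
\[
\|\nabla w_N\|_{L^2}^2+\int_{\mathcal{D}}V|w_N|^2+\int_{\mathcal{D}}\Gamma_M(|w_N|^2)=\|\nabla w_N^{-}\|_{L^2}^2+\int_{\mathcal{D}}V|w_N^{-}|^2+\int_{\mathcal{D}}\Gamma_M(|w_N^{-}|^2),
\]
which together with $V,\Gamma_M\ge 0$ and the $H^1$-stability of the projection $w^{-}\mapsto w_N^{-}$ bounds $(w_N)$ uniformly in $H^1_0(\mathcal{D})$. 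By Rellich--Kondrachov a subsequence converges to some $w^{\ast}\in H^1_0(\mathcal{D})$ weakly in $H^1_0(\mathcal{D})$, strongly in $L^2(\mathcal{D})$, and pointwise a.e.

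The main obstacle, and the reason for introducing the truncated nonlinearity in the first place, is the passage to the limit in the nonlinear term $\Ltwo{G(w_N,w_N^{-})w_N^{1/2}}{v_N}$ (against $v_N\in X_N$ approximating an arbitrary $v\in H^1_0(\mathcal{D})$ in $H^1$). Here I invoke Lemma~\ref{truncation-lemma}, which delivers the uniform bound $|G|\le C\gamma^{M,0}$; together with the a.e.\ convergence of $w_N$ this gives $G(w_N,w_N^{-})\to G(w^{\ast},w^{-})$ a.e. The product $G(w_N,w_N^{-})w_N^{1/2}$ is therefore dominated by $C\gamma^{M,0}|w_N^{1/2}|$, a sequence that converges in $L^2(\mathcal{D})$ and is thus $2$-equi-integrable in the sense required by Lemma~\ref{Vitali-convergence-theorem}, which then yields $L^2$-convergence of the nonlinear term. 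The remaining linear terms pass to the limit by weak $H^1$ and strong $L^2$ convergence, so $w^{\ast}$ satisfies \eqref{semi-disc-cnd-problem-truncation}; setting $\hatuM{n}:=w^{\ast}$ closes the induction.
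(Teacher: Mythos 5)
Your proof is correct and follows the same overall architecture as the paper's: Galerkin approximation on finite-dimensional subspaces, existence there via Lemma~\ref{brouwer-lemma}, a uniform $H^1$ bound from the discrete conservation laws, Rellich compactness, and Vitali's theorem (Lemma~\ref{Vitali-convergence-theorem}) to pass to the limit in the nonlinear quotient. Two of your technical choices differ from the paper and are worth pointing out. First, the paper keeps the previous iterate $\hatuM{n-1}$ \emph{un-projected} in the finite-dimensional problem \eqref{semi-disc-cnd-problem-truncation-XN}, whereas you replace it by its projection $w_N^{-}\in X_N$ (onto an eigenfunction basis that is simultaneously $L^2$- and $H^1$-orthogonal). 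Your variant has the advantage that the test functions $w_N^{1/2}$ and $w_N-w_N^{-}$ used for mass and energy conservation actually lie in $X_N$, so the discrete energy identity underlying the uniform $H^1$ bound is rigorously available; in the paper's formulation the corresponding test function $z_N^n-\hatuM{n-1}$ is not in $X_N$, and the asserted energy conservation is left unjustified. The small price you pay is the extra (easy) step of tracking $w_N^{-}\to w^{-}$ in $H^1$ and a.e.\ inside the nonlinear quotient, which you should state explicitly but which follows from the spectral projection's convergence. Second, your coercivity argument for Brouwer — substituting $z=(w_N+w_N^{-})/2$ so that all terms carrying the factor $\ci\deltat{n}$ vanish upon taking $\Re\langle\cdot,z\rangle$, leaving exactly $2\|z\|_{L^2}^2-2\Re\Ltwo{w_N^{-}}{z}$ — is cleaner than the paper's, which instead bounds each term separately and invokes coercivity of the gradient part together with norm equivalence on $X_N$. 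Both routes land in the hypotheses of Lemma~\ref{brouwer-lemma}; yours requires no constants at all.
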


\begin{proof}
The proof is established in two steps. First, we show existence in finite dimensional subspaces, then we pass to the limit to establish existence for the infinite dimensional problem. For this purpose, let $\{ \phi_m \;|\; m \in \mathbb{N} \}$ denote a countable 
basis of $H^1_0(\mathcal{D})$. We define the finite-dimensional subspaces 
$$X_N:=\{ \phi_m | \hspace{2pt} 1 \le m \le N \}.
$$

{\it Step 1 - existence in $X_N$.} Let $\cdot$ denote the Euclidean inner product on $\mathbb{C}^N$
and let $\hatuM{n}$ denote a solution to \eqref{semi-disc-cnd-problem-truncation}.
For $n \ge 1$ we look for $z_N^n \in X_N$ with
\begin{eqnarray}
\label{semi-disc-cnd-problem-truncation-XN}\nonumber\lefteqn{\Ltwo{ z_N^n }{v} +
\deltat{n} \hspace{2pt}\ci \hspace{2pt}
\Ltwo{\nabla z_N^{n-\frac{1}{2}}}{\nabla v}
 + \deltat{n} \hspace{2pt}\ci \hspace{2pt} \biggl\langle \frac{\Gamma_M(|z_N^n|^2) -  \Gamma_M(| \hatuM{n-1} |^2)}{|z_N^{n}|^2 - | \hatuM{n-1} |^2} z_N^{n-\frac{1}{2}},v\biggr\rangle_{L^2(\mathcal D}}\\
&=& \Ltwo{ \hatuM{n-1} }{v} \hspace{340pt}
\end{eqnarray}
for all $v \in X_N$ and where $z_N^{n-\frac{1}{2}}:=(z_N^{n}+\hatuM{n-1})/2$.
We assume that $\hatuM{n-1} \in H^1_0(\mathcal{D})$ exists and want to show existence of $z_N^n \in X_N$.
In order to apply Lemma \ref{brouwer-lemma} we define $g : \C^{N} \rightarrow \C^{N}$ for $\boldsymbol{\alpha}\in \C^{N}$ by
\begin{eqnarray*}
\lefteqn{g_\ell(\boldsymbol{\alpha}) := - \deltat{n}^{-1} \hspace{2pt}\ci \sum_{m=1}^{N} \boldsymbol{\alpha}_m \Ltwo{ \phi_m }{ \phi_\ell } 
 +
\frac{1}{2} \sum_{m=1}^{N} \boldsymbol{\alpha}_m \hspace{2pt} 
\Ltwo{ \nabla \phi_m }{ \nabla \phi_\ell }
 +
\frac{1}{2} \sum_{m=1}^{N} \boldsymbol{\alpha}_m \hspace{2pt} 
\Ltwo{ V \phi_m }{  \phi_\ell }
}\\
&\enspace&
+ \frac{1}{2} \Ltwo{ \frac{\Gamma_M(\left| \sum_{m=1}^{N} \boldsymbol{\alpha}_m \phi_m  \right|^2) - \Gamma_M(|\hatuM{n-1}|^2)}{\left| \sum_{m=1}^{N} \boldsymbol{\alpha}_m \phi_m  \right|^2 - |\hatuM{n-1}|^2}  \left( \hatuM{n-1} + \sum_{m=1}^{N} \boldsymbol{\alpha}_m \phi_m
\right)
}{ \phi_\ell }
 +F_\ell,
\end{eqnarray*}
where $F\in \C^{N}$ is defined by
\begin{align*}
F_\ell := \frac{1}{2}\Ltwo{ \nabla \hatuM{n-1}}{ \nabla \phi_\ell } 
+ \frac{1}{2}\Ltwo{ V \hatuM{n-1}}{  \phi_\ell }
 + \ci \deltat{n}^{-1} \Ltwo{\hatuM{n-1}}{\lambda_\ell}.
\end{align*}
To verify existence of $\boldsymbol{\alpha}_0$ with $g(\boldsymbol{\alpha}_0)=0$, we need to show that there exists $K \in \R_{>0}$ such that $\Re( g(\boldsymbol{\alpha}) \cdot \boldsymbol{\alpha} ) \ge 0$ for all $\boldsymbol{\alpha} \in \C^{N}$ with $|\boldsymbol{\alpha}|= K$. We define $z_\alpha:=\sum_{m=1}^{N} \boldsymbol{\alpha}_m \phi_m$ and 
see that
\begin{eqnarray*}
\lefteqn{\Re( g(\boldsymbol{\alpha}) \cdot \boldsymbol{\alpha} ) =
\frac{1}{2}\| \nabla z_\alpha \|_{L^2({\mathcal{D}})}^2
+ \frac{1}{2} \Ltwo{ V z_\alpha}{ z_\alpha}
+ \Re( F \cdot \boldsymbol{\alpha} )} \\
&\enspace& \qquad + 
\frac{1}{2} \Re \Ltwo{ \frac{\Gamma_M(\left|z_\alpha \right|^2) - \Gamma_M(|\hatuM{n-1}|^2)}{\left| z_\alpha  \right|^2 - |\hatuM{n-1}|^2}  \left( \hatuM{n-1} +z_\alpha
\right)
}{ z_\alpha}\\
&\ge& \frac{1}{2}\| \nabla z_\alpha \|_{L^2({\mathcal{D}})}^2
- \left( \frac{1}{2} \| V \|_{L^{\infty}(\mathcal{D})} + \deltat{n}^{-1} \right)
 \|  \hatuM{n-1} \|_{L^2({\mathcal{D}})} \| z_\alpha \|_{L^2({\mathcal{D}})
}
- \frac{1}{2} \| \nabla \hatuM{n-1} \|_{L^2({\mathcal{D}})} \| \nabla  z_\alpha \|_{L^2({\mathcal{D}})}
\\
&\enspace& \qquad 
- \frac{1}{2} \Re \Ltwo{ \frac{\Gamma_M(\left|z_\alpha \right|^2) - \Gamma_M(|\hatuM{n-1}|^2)}{\left| z_\alpha  \right|^2 - |\hatuM{n-1}|^2}  \left( \hatuM{n-1} +z_\alpha
\right)
}{ \hatuM{n-1} }
\\
&\ge& \frac{1}{2}\| \nabla z_\alpha \|_{L^2({\mathcal{D}})} \left( \| \nabla z_\alpha \|_{L^2({\mathcal{D}})} 
- \sqrt{2} \hspace{2pt} \mbox{\rm diam}(\mathcal{D}) 
\left( \frac{1}{2} \| V \|_{L^{\infty}(\mathcal{D})} + \deltat{n}^{-1} \right)
\| \hatuM{n-1} \|_{L^2({\mathcal{D}})}
- \| \nabla \hatuM{n-1} \|_{L^2({\mathcal{D}})} \right)\\
&\enspace& \qquad 
- \frac{1}{2} \gamma^{M,0} \| \hatuM{n-1} +z_\alpha \|_{L^2(\mathcal{D})}
\| \hatuM{n-1}  \|_{L^2(\mathcal{D})}
\\
&\ge& \frac{1}{2} \| \nabla z_\alpha \|_{L^2({\mathcal{D}})} \left( \| \nabla z_\alpha \|_{L^2({\mathcal{D}})} - C_1 \right) - C_2,
\end{eqnarray*}
for some $\boldsymbol{\alpha}$-independent positive constants $C_1$ and $C_2$. Exploiting the equivalence of norms in finite dimensional Hilbert spaces we conclude the existence of (new) $\boldsymbol{\alpha}$-independent positive constants such that
$\Re( g(\boldsymbol{\alpha}) \cdot \boldsymbol{\alpha} ) \ge C_3 |\boldsymbol{\alpha}| \left( |\boldsymbol{\alpha}| - C_1 \right) - C_2$. Hence, for all sufficiently large $|\boldsymbol{\alpha}|$ we have $\Re( g(\boldsymbol{\alpha}) \cdot \boldsymbol{\alpha} ) \ge 0$ and therefore with Lemma \ref{brouwer-lemma} the existence of solutions to \eqref{semi-disc-cnd-problem-truncation-XN}, provided that $\hatuM{n-1}$ exists.

{\it Step 2 - existence in $H^1_0(\mathcal{D})$.} We proceed inductively to show the existence of $\hatuM{n}$ (the case $n=0$ with $\hatuM{0}=u^0$ is trivially fulfilled). Assume hence that $\hatuM{n-1} \in H^1_0(\mathcal{D})$ exists. Then we can apply {\it Step 1} to conclude that there exists $z_N^n \in X_N$ which is a solution to the finite dimensional problem \eqref{semi-disc-cnd-problem-truncation-XN}. It is easy to verify that problem \eqref{semi-disc-cnd-problem-truncation-XN} is energy conserving and hence
\begin{align*}
\| \nabla z_N^n \|_{L^2(\mathcal{D})}^2 \le \int_{\mathcal{D}} \left( |\nabla z_N^n|^2 + 
 V |z_N^n|^2
+
\Gamma(|z_N^n|^2) \right)
= E( z_N^n ) = E( \hatuM{n-1} ) = E( u^0 ) \le C_{\Gamma} \| u^0 \|_{H^1(\mathcal{D})}^2.
\end{align*}
Hence, for fixed $n$, the corresponding sequence of discrete solutions $(z_N^n)_{N \in \N} \subset H^1_0(\mathcal{D})$ is a bounded sequence in $H^1_0(\mathcal{D})$ with $\| \nabla z_N^n \|_{L^2(\mathcal{D})}^2 \le C_{\Gamma} \| u^0 \|_{H^1(\mathcal{D})}^2$. Consequently there exists a subsequence of $(z_N^n)_{N \in \N}$ (for simplicity again denoted by $(z_N^n)_{N \in \N}$) and a function $z^n_{\infty} \in H^1_0(\mathcal{D})$ such that
\begin{align*}
z_N^n \rightharpoonup z^n_{\infty} \mbox{ weakly in } H^1_0(\mathcal{D}) \qquad \mbox{and} \qquad 
z_N^n \rightarrow z^n_{\infty} \mbox{ strongly in } L^2(\mathcal{D}) \qquad \mbox{for } N \rightarrow \infty. 
\end{align*}
Here we used the Rellich embedding theorem. For arbitrary $v \in H^1_0(\mathcal{D})$ we see that
\begin{align*}
 \Ltwo{ z_N^n }{v} \overset{N \rightarrow \infty }\longrightarrow  \Ltwo{ z_{\infty}^n }{v}
 \quad \mbox{and}  \quad \Ltwo{\nabla z_N^{n-\frac{1}{2}}}{\nabla v}
  \overset{N \rightarrow \infty }\longrightarrow \frac{1}{2} \Ltwo{\nabla z_{\infty}^{n} + \hatuM{n-1} }{\nabla v}
\end{align*}
by the weak convergence in $H^1_0(\mathcal{D})$. It remains to investigate the term
\begin{eqnarray*}
A(z_N^n) := \frac{\Gamma_M(|z_N^n|^2) -  \Gamma_M(| \hatuM{n-1} |^2)}{|z_N^{n}|^2 - | \hatuM{n-1} |^2} \left( \hatuM{n-1} + z_N^{n} \right).
\end{eqnarray*}
We want to apply Vitali's theorem (Lemma \ref{Vitali-convergence-theorem}) to conclude that $A(z_N^n) \rightarrow A(z_{\infty}^n)$ strongly in $L^2(\mathcal{D})$. For that purpose, we need to verify convergence in measure and $2$-equi-integrability. To verify the first property, we exploit that $z_N^n\rightarrow z_{\infty}^n$ strongly in $L^2(\mathcal{D})$. Using the Tschebyscheff inequality we see that $z_N^n$ also converges to $z_{\infty}^n$ in measure. This implies in particular that from every subsequence of $(z_N^n)_{N\in \mathbf{N}}$ we can extract another subsequence such that $z_N^n$ converges to $z_{\infty}^n$ almost everywhere. On the other hand, by the continuity of $A$, the convergence almost everywhere is preserved when $A$ is applied to the sequence. Consequently, for every subsequence of $(A(z_{N}^n))_{N\in \mathbf{N}}$ one can extract another subsequence that converges a.e. to $A(z_{\infty}^n)$. This is equivalent to the property that $(A(z_{N}^n))_{N\in \mathbf{N}}$ converges locally in measure to $(A(z_{\infty}^n))_{N\in \mathbf{N}}$ (since $\mathcal{D}$ is bounded). Hence, we have the first requirement for Lemma \ref{Vitali-convergence-theorem}. For the second requirement we first observe that 
$|A(z_N^n)|\le C_1 |z_N^n| + C_2$ (with $C_1=  \gamma^{M,0}$ and $C_2(x)=\gamma^{M,0} |\hatuM{n-1}(x)|$). Hence, $|A(z_N^n)|$ is $2$-equi-integrable if $|z_N^n|$ is $2$-equi-integrable, which however follows immediately again from the strong convergence $z_N^n \rightarrow z_{\infty}^n$ in $L^2(\mathcal{D})$ and Lemma \ref{Vitali-convergence-theorem} (which works in both directions). In conclusion, Vitali's convergence  theorem applies and yields $A(z_N^n) \rightarrow A(z_{\infty}^n)$ strongly in $L^2(\mathcal{D})$ for $N\rightarrow \infty$. With this we have that there exists a subsequence of $(z_{N}^n)_{N\in \mathbf{N}}$ (still denoted by $z_{N}^n$) and $z^n_{\infty} \in H^1_0(\mathcal{D})$ such that 
\begin{align*}
z_N^n \rightharpoonup z^n_{\infty} \mbox{ weakly in } H^1_0(\mathcal{D}) \qquad \mbox{and} \qquad 
A(z_N^n) \rightarrow A(z^n_{\infty}) \mbox{ stronlgy in } L^2(\mathcal{D}) \qquad \mbox{for } N \rightarrow \infty. 
\end{align*}
With this we can pass to the limit in \eqref{semi-disc-cnd-problem-truncation-XN} to obtain that $z^n_{\infty} \in H^1_0(\mathcal{D})$ with $z_{\infty}^{n-\frac{1}{2}}:=(z^n_{\infty} + \hatuM{n-1})/2$ solves
\begin{eqnarray*}
\lefteqn{\Ltwo{ z^n_{\infty} }{v} +
\deltat{n} \hspace{2pt}\ci \hspace{2pt}
\Ltwo{ \nabla z_{\infty}^{n-\frac{1}{2}} }{\nabla v}
 + \deltat{n} \hspace{2pt}\ci \hspace{2pt} \Ltwo{ \frac{\Gamma_M(|z^n_{\infty}|^2) -  \Gamma_M(| \hatuM{n-1} |^2)}{|z^n_{\infty}|^2 - | \hatuM{n-1} |^2} z_{\infty}^{n-\frac{1}{2}}}{v}}\\
&=& \Ltwo{ \hatuM{n-1} }{v} \hspace{100pt} \mbox{for all } v\in H^1_0(\mathcal{D}).\hspace{200pt}
\end{eqnarray*}
Consequently, we showed iteratively the existence of a solution $\hatuM{n} \in H^1_0(\mathcal{D})$ to \eqref{semi-disc-cnd-problem-truncation}.
\end{proof}

 \subsection{Uniform $L^{\infty}$-bounds for the truncated approximations}

Goal of this section is to show that if $M =\| u \|_{L^{\infty}(\mathcal{D} \times (0,T))} + C\| u^0 \|_{L^{2}(\mathcal{D})}$, then there exists $\hat{\deltat{}}>0$ such that $\sup_{0\le n \le N} \| \hatuM{n} \|_{L^{\infty}(\mathcal{D})} \le M$ whenever $\sup_{0\le n \le N}\deltat{n}<\hat{\deltat{}}$.

The key to deriving such an $L^{\infty}$-bound is to first establish a uniform bound for the
error between the Laplacian of the exact solution $u$ and the Laplacian of a truncated approximation $\hatuM{}$, i.e. for  $\| \triangle \hatuM{n} -  \triangle u^n\|_{L^2(\mathcal{D})}$. We start with deriving corresponding error identities. For that purpose we define the continuous function $\GM$ for $t_1,t_2 \in \R_{\ge 0}$ by
\begin{align}\label{def-GM}
\GM(t_1,t_2):=
\begin{cases}
\frac{\Gamma_M(t_1) - \Gamma_M(t_2)}{t_1 - t_2} & \mbox{for  } t_1 \neq t_2 \\
\hspace{12pt}\gamma_M(t_1) & \mbox{for  } t_1 = t_2.
\end{cases}
\end{align}

\begin{lemma}[Error identities]
\label{lemma-error-ids}
Let $\hatuM{n} \in H^1_0(\mathcal{D})$ denote a solution to \eqref{semi-disc-cnd-problem-truncation} and
let $\hateM{0} := \hatuM{0} -  u^0=0$. For $n \in \N$, $n\ge1$ we define the error $\hateM{n} := \hatuM{n} -  u^n$. With $\ham:=\triangle - V$ it holds
\begin{eqnarray}
\nonumber\label{error-identity-L2}\lefteqn{ \| \hateM{n} \|^2_{L^2({\mathcal{D}})} = \| \hateM{n-1} \|^2_{L^2({\mathcal{D}})} + \frac{1}{2}\Im \left( \int_{I_n} \int_{\mathcal{D}} 
\ham \hspace{-2pt}
\left( 2u -  u^{n} - u^{n-1}\right) \left( \hateM{n}+ \hateM{n-1} \right) \right) } \\
\nonumber&\enspace& \enspace + 
\frac{1}{2} \Im \left( \int_{I_n} \int_{\mathcal{D}} 
\left(  \GM(|u^{n}|^2,|u^{n-1}|^2)  (u^{n} + u^{n-1}) - 2 \GM(|u|^2,|u|^2) u \right) \left( \hateM{n}+ \hateM{n-1} \right)
 \right)
\\
&\enspace& \enspace + 
\frac{1}{4} \deltat{n} \Im 
 \int_{\mathcal{D}} \left(
 \GM(|\hatuM{n}|^2,|\hatuM{n-1}|^2)  (\hatuM{n} + \hatuM{n-1}) \right.\\
 \nonumber&\enspace& \hspace{80pt} \left.  -
 \GM(|u^{n}|^2,|u^{n-1}|^2) (u^{n} + u^{n-1})
\left( \hateM{n}+ \hateM{n-1} \right) \right)
\end{eqnarray}
and
\begin{eqnarray*}
\nonumber\label{error-identity-L2-Laplacian}\lefteqn{\| \ham \hateM{n} \|_{L^2(\mathcal{D})}^2
= \| \ham \hateM{n-1} \|_{L^2(\mathcal{D})}^2
+ \frac{1}{\deltat{n}} \Re \int_{I_n} \int_{\mathcal{D}}  
\ham
\hspace{-2pt}\left(  u^{n} + u^{n-1} - 2u \right)  \left( \ham( \hateM{n} - \hateM{n-1}) \right)
 }\\
\nonumber&\enspace& \quad 
+ \frac{1}{\deltat{n}} \Re \int_{I_n} \int_{\mathcal{D}} \left( 
  \GM(|u^{n}|^2,|u^{n-1}|^2) (u^{n} + u^{n-1}) - 2 \GM(|u|^2,|u|^2) u \right) \left( \ham( \hateM{n} - \hateM{n-1} ) \right)  \\
&\enspace& \quad 
+ \Re \int_{\mathcal{D}} \left( \GM(|\hatuM{n}|^2,|\hatuM{n-1}|^2)  (\hatuM{n} + \hatuM{n-1}) \right. \\
\nonumber&\enspace& \hspace{80pt} \left.
 - \GM(|u^{n}|^2,|u^{n-1}|^2) (u^{n} + u^{n-1}) \right) \left( \ham ( \hateM{n} - \hateM{n-1} )
\right).
\end{eqnarray*}
\end{lemma}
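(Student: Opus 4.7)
My plan is to derive both identities by rearranging the weak error equation obtained by subtracting the truncated scheme \eqref{semi-disc-cnd-problem-truncation} from the relation \eqref{exact-solution-with-discrete-test-func} and testing it with suitable energy-type test functions. As a first step I would split every mismatch into a purely discrete piece plus a piece involving a time-integral over $I_n$. For the linear contributions, the algebraic identity $\deltat{n}\tfrac{u^n+u^{n-1}}{2}-\int_{I_n} u\,dt = -\tfrac{1}{2}\int_{I_n}(2u-u^n-u^{n-1})\,dt$ disentangles the gradient and potential contributions into a purely discrete part in $\hateM{n-\frac12}$ and an integrated consistency error in $2u-u^n-u^{n-1}$. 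For the nonlinearity, I would add and subtract the pivot $\tfrac{\deltat{n}}{2}\GM(|u^n|^2,|u^{n-1}|^2)(u^n+u^{n-1})$ and invoke $\gamma_M(|u|^2)u = \GM(|u|^2,|u|^2)u$ (from \eqref{def-GM}) to separate the discrete-time mismatch $\GM(|\hatuM{n}|^2,|\hatuM{n-1}|^2)(\hatuM{n}+\hatuM{n-1}) - \GM(|u^n|^2,|u^{n-1}|^2)(u^n+u^{n-1})$ from the integrated consistency error $\GM(|u^n|^2,|u^{n-1}|^2)(u^n+u^{n-1}) - 2\GM(|u|^2,|u|^2)u$.

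For the first identity, I would test with $v=\hateM{n}+\hateM{n-1}\in H^1_0(\mathcal{D})$ and take the real part of the error equation. The two purely discrete linear terms $\ci\deltat{n}\Ltwo{\nabla\hateM{n-\frac12}}{2\nabla\hateM{n-\frac12}}$ and $\ci\deltat{n}\Ltwo{V\hateM{n-\frac12}}{2\hateM{n-\frac12}}$ are purely imaginary and hence drop out. The time-difference term yields $\Re\Ltwo{\hateM{n}-\hateM{n-1}}{\hateM{n}+\hateM{n-1}} = \|\hateM{n}\|_{L^2(\mathcal{D})}^2 - \|\hateM{n-1}\|_{L^2(\mathcal{D})}^2$, which becomes the left-hand side. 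An integration by parts (legitimate since $v\in H^1_0$) fuses the remaining gradient and potential consistency contributions into one pairing against $\ham(2u - u^n - u^{n-1})$, and $\Re(\ci z) = -\Im z$ converts the prefactors into the $\Im$'s on the right-hand side of \eqref{error-identity-L2}.

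For the second identity, I would write the error equation in its strong $L^2$-form (using that $\hateM{n-\frac12}\in H^2\cap H^1_0$ so that $\ham\hateM{n-\frac12}=\triangle\hateM{n-\frac12} - V\hateM{n-\frac12}\in L^2$) and $L^2$-pair it with $\ham(\hateM{n}-\hateM{n-1})$, then take the imaginary part. Because $\ham$ is $L^2$-self-adjoint on $H^2\cap H^1_0$, the term $\Ltwo{\hateM{n}-\hateM{n-1}}{\ham(\hateM{n}-\hateM{n-1})}$ is real and drops out. The discrete elliptic piece, which in strong form reads $-\ci\deltat{n}\ham\hateM{n-\frac12} = -\tfrac{\ci\deltat{n}}{2}\ham(\hateM{n}+\hateM{n-1})$, contributes $-\tfrac{\deltat{n}}{2}\Re\Ltwo{\ham(\hateM{n}+\hateM{n-1})}{\ham(\hateM{n}-\hateM{n-1})}$, and the elementary identity $\Re\langle a+b, a-b\rangle = \|a\|^2 - \|b\|^2$ applied with $a=\ham\hateM{n}$ and $b=\ham\hateM{n-1}$ yields the desired $\|\ham\hateM{n}\|_{L^2(\mathcal{D})}^2 - \|\ham\hateM{n-1}\|_{L^2(\mathcal{D})}^2$. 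Dividing through by $-\deltat{n}/2$ and collecting the remaining linear-consistency, nonlinear-consistency, and nonlinear-discrete-mismatch terms produces the claimed identity, where $\Im(\ci z)=\Re z$ changes each $\Im$ (or $\tfrac{1}{\deltat{n}}\Im$) into the $\Re$ (or $\tfrac{1}{\deltat{n}}\Re$) that appears in the statement.

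The main obstacle is the regularity needed to make the second identity meaningful: the pairing with $\ham(\hateM{n}-\hateM{n-1})$ presupposes $\hatuM{n}\in H^2(\mathcal{D})\cap H^1_0(\mathcal{D})$. This requires an elliptic bootstrap applied to \eqref{semi-disc-cnd-problem-truncation} rewritten as a linear elliptic problem for $\hatuM{n}$ with lower-order source in $L^2$ (bounded via energy conservation for the truncated scheme and the global boundedness of $\gamma_M$ from Lemma~\ref{truncation-lemma}), exploiting convexity of the polyhedron $\mathcal{D}$ together with $V\in L^{\infty}(\mathcal{D})$. Once this regularity is in hand, the algebraic manipulations are routine and the complex real/imaginary bookkeeping reduces to the standard Crank-Nicolson energy identity adapted to the error $\hateM{n}$ and to its image under $\ham$.
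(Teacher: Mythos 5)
Your proposal is correct and follows essentially the same route as the paper: subtract the scheme from the exact relation \eqref{exact-solution-with-discrete-test-func}, split the linear and nonlinear mismatches into a time-integrated consistency error plus a discrete mismatch via the pivot $\GM(|u^n|^2,|u^{n-1}|^2)(u^n+u^{n-1})$, then test with $\hateM{n}+\hateM{n-1}$ (real part) and with $\ham(\hateM{n}-\hateM{n-1})$ (imaginary part). The only difference is that you make explicit the $H^2$-regularity of $\hatuM{n}$ needed to justify the second test function, which the paper dispatches with the parenthetical remark that $v\in L^2(\mathcal{D})$ is admissible; your elliptic-bootstrap justification is a welcome clarification rather than a deviation.
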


\begin{proof}
For simplicity let $(v,w)_{H^1_V}:=\Ltwo{\nabla v }{\nabla w} +\Ltwo{V \hspace{2pt} v }{ w}$ for $v,w \in H^1_0(\mathcal{D})$.
From \eqref{semi-disc-cnd-problem-truncation} we have
\begin{eqnarray*}
\lefteqn{\Ltwo{ \hatuM{n} - \hatuM{n-1}}{v} +
\frac{\deltat{n}}{2} \hspace{2pt}\ci \hspace{2pt}
(\hatuM{n} + \hatuM{n-1}, v )_{H^1_V}
}\\
 &=& - \deltat{n} \hspace{2pt}\ci \hspace{2pt} \frac{1}{2} \Ltwo{ \GM(|\hatuM{n}|^2,|\hatuM{n-1}|^2) (\hatuM{n} + \hatuM{n-1}) }{v}.\hspace{140pt}
\end{eqnarray*}
Subtracting the term
\begin{eqnarray*}
\Ltwo{ u^{n} - u^{n-1} }{v } +
\frac{\deltat{n}}{2} \hspace{2pt} \ci \hspace{2pt} 
( u^{n} + u^{n-1} , v )_{H^1_V}
\end{eqnarray*}
on both sides gives us
\begin{eqnarray}
\nonumber\lefteqn{\Ltwo{ \hateM{n} - \hateM{n-1}}{v } +
\frac{\deltat{n}}{2} \hspace{2pt}\ci \hspace{2pt}
( \hateM{n} +  \hateM{n-1}, v )_{H^1_V}}\\
\nonumber &=& \Ltwo{ u^{n-1} }{v } - \Ltwo{ u^{n} }{v }
-\frac{\deltat{n}}{2} \hspace{2pt} \ci \hspace{2pt}
(u^{n} + u^{n-1} , v )_{H^1_V}
 \\
\nonumber&\enspace& \quad - \deltat{n} \hspace{2pt}\ci \hspace{2pt} \frac{1}{2} \Ltwo{ \GM(|\hatuM{n}|^2,|\hatuM{n-1}|^2) (\hatuM{n} + \hatuM{n-1}) }{v} \\
\nonumber &\overset{\eqref{exact-solution-with-discrete-test-func}}{=}& 
\ci \int_{I_n} \left( ( u(\cdot,t) , v )_{H^1_V} -\frac{1}{2} \hspace{2pt} ( u^{n} + u^{n-1} , v )_{H^1_V} \right) \hspace{2pt} dt
\\
\nonumber&\enspace& \quad
+ \ci \int_{I_n} \left( 
\Ltwo{  \GM(|u(\cdot,t)|^2,|u(\cdot,t)|^2) u(\cdot,t) }{ v }  \right.\\
\nonumber &\enspace& \left. \hspace{100pt}
-   \Ltwo{ \GM(|\hatuM{n}|^2,|\hatuM{n-1}|^2)  \frac{\hatuM{n} + \hatuM{n-1}}{2} }{v} 
\right) \hspace{2pt} dt \\
\nonumber &=& 
\ci \int_{I_n} \left( ( u(\cdot,t) , v )_{H^1_V} -\frac{1}{2} \hspace{2pt} ( u^{n} + u^{n-1} , v )_{H^1_V} \right) \hspace{2pt} dt
\\
\nonumber&\enspace& \quad 
+ \ci \int_{I_n} \int_{\mathcal{D}} \left( \GM(|u|^2,|u|^2) u - \GM(|u^{n}|^2,|u^{n-1}|^2)  \frac{u^{n} + u^{n-1}}{2} \right) v \\
\nonumber&\enspace& \quad 
+\ci \deltat{n} \int_{\mathcal{D}} \left( \GM(|u^{n}|^2,|u^{n-1}|^2)  \frac{u^{n} + u^{n-1}}{2}  -
 \GM(|\hatuM{n}|^2,|\hatuM{n-1}|^2)  \frac{\hatuM{n} + \hatuM{n-1}}{2} \right) v.
\end{eqnarray}
Testing with $v=\hateM{n}+ \hateM{n-1}$ and only using the real part of the equation proves the $L^2$-norm identity \eqref{error-identity-L2}. Testing with 
$v=\ham(\hateM{n} - \hateM{n-1})$
(note that $v\in L^2(\mathcal{D})$ is admissible here)
and taking the imaginary part proves the error identity for 
$\ham$ 
i.e. equation \eqref{error-identity-L2-Laplacian}.
\end{proof}

\begin{lemma}\label{lemma-bound-laplacian-hateM}
Let $\hatuM{n} \in H^1_0(\mathcal{D})$ denote a solution to \eqref{semi-disc-cnd-problem-truncation} and recall the definition of $\gamma^{M,k}$ from Lemma \ref{truncation-lemma}. For $C_{M,\gamma} := \left(\sum_{k=0}^2 M^{2k} \gamma^{M,k}\right)^{-1}$ there exists a generic constant $C_g$ (independent of the problem and the discretization) such that if $\deltat{} \le C_g C_{M,\gamma}$, if $\partial_t u \in L^4(\mathcal{D} \times (0,T))$, $\partial_{tt} u \in L^2(\mathcal{D} \times (0,T))$ and $\partial_{t}^{s} u \in L^2(0,T;H^2(\mathcal{D}))$
for $s=1$ or $s=2$,
 then it holds
\begin{align*}
\deltat{} \|  \ham \hateM{n} \|_{L^2(\mathcal{D})} + \| \hateM{n} \|_{L^2(\mathcal{D})} 
\le C(u,\gamma,V,\mathcal{D},M) \deltat{}^s.
\end{align*}
for some constant depending on $u$, $\gamma$, $V$, $ \mathcal{D}$ and $M$. Furthermore, for $s=1$ it still holds
\begin{align*}
\lim_{\deltat{}\rightarrow 0} \| \ham \hateM{n} \|_{L^2(\mathcal{D})} = 0.
\end{align*}
\end{lemma}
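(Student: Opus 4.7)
The plan is to combine the two error identities of Lemma~\ref{lemma-error-ids} in a coupled discrete-Gronwall argument for $e^n := \hateM{n}$ and $g^n := \ham e^n$, where crucially $e^0 = g^0 = 0$ by construction. In each identity, the three right-hand side contributions---linear consistency, nonlinear consistency, and the truncated nonlinear error---are estimated as follows. The first two are handled via Peano-kernel / Taylor expansion centered at $t_{n-1/2}$, using the assumed regularity $\partial_t^s u \in L^2(0,T;H^2)$ and the smoothness of $\Gamma_M,\gamma_M$ together with $\|u\|_{L^{\infty}(\mathcal{D}\times(0,T))}\le M$. The third is controlled pointwise by Lemma~\ref{truncation-lemma}, equation~\eqref{f_M_cond_3}, applied with $(v_1,w_1)=(u^n,u^{n-1})$ (both dominated by $M$) and $(v_2,w_2)=(\hatuM{n},\hatuM{n-1})$; this yields a bound of the form $C\,C_{M,\gamma}^{-1}\bigl(|u^n-u^{n-1}|^2 + |e^n| + |e^{n-1}|\bigr)$ \emph{without} any a~priori $L^{\infty}$-control on the discrete iterates, which is exactly why the truncation $\gamma_M$ is introduced.

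For the $L^2$ identity, Cauchy--Schwarz in $L^2(\mathcal{D})$ and Young's inequality produce
\begin{equation*}
\|e^n\|_{L^2}^2 - \|e^{n-1}\|_{L^2}^2 \le C\,\deltat{n}^{2s+1}\,R(u) + C\,C_{M,\gamma}^{-1}\,\deltat{n}\,\bigl(\|e^n\|_{L^2}^2 + \|e^{n-1}\|_{L^2}^2\bigr),
\end{equation*}
where $R(u)$ collects the relevant regularity norms of $u$. The hypothesis $\deltat{}\le C_g C_{M,\gamma}$ makes the prefactor $CC_{M,\gamma}^{-1}\deltat{n}$ strictly below $\tfrac{1}{2}$, so the $\|e^n\|^2$ term on the right can be absorbed on the left, and a discrete Gronwall inequality then yields $\sup_n \|e^n\|_{L^2} \le C\,\deltat{}^s$.

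The $\ham$ identity is more delicate because of its structure: every right-hand side term carries the factor $\ham(e^n - e^{n-1}) = g^n - g^{n-1}$. After Cauchy--Schwarz in $L^2(\mathcal{D})$, this produces
\begin{equation*}
\|g^n\|_{L^2}^2 - \|g^{n-1}\|_{L^2}^2 \le \|X_n\|_{L^2}\bigl(\|g^n\|_{L^2} + \|g^{n-1}\|_{L^2}\bigr),
\end{equation*}
where $\|X_n\|_{L^2}$ is bounded by a consistency contribution of order $\deltat{n}^{s-1/2}\|\partial_t^s u\|_{L^2(I_n;H^2)}$, an analogous nonlinear consistency piece, and $C\,C_{M,\gamma}^{-1}\bigl(\|u^n - u^{n-1}\|_{L^4(\mathcal{D})}^2 + \|e^n\|_{L^2} + \|e^{n-1}\|_{L^2}\bigr)$; the appearance of the $L^4$-norm is what motivates the regularity hypothesis $\partial_t u \in L^4(\mathcal{D}\times(0,T))$. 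Factoring the LHS as $(\|g^n\|-\|g^{n-1}\|)(\|g^n\|+\|g^{n-1}\|)$, telescoping from $g^0 = 0$, and inserting the Step~1 bound gives $\|g^n\|_{L^2} \le \sum_{k\le n}\|X_k\|_{L^2} \le C\,\deltat{}^{s-1}$. Multiplying by $\deltat{}$ and adding the Step~1 estimate produces the claimed bound.

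The principal obstacle is to perform this bookkeeping so that the discrete Gronwall argument closes without any a~priori $L^{\infty}$-information on $\hatuM{n}$, which would be circular; this is precisely what the global bounds in \eqref{f_M_cond_3a} together with the time-step restriction $\deltat{} \le C_g C_{M,\gamma}$ buy us. The qualitative limit $\|\ham\hateM{n}\|_{L^2}\to 0$ for $s=1$ does not follow from the quantitative rate (which only gives boundedness), and I would prove it separately: from the Step~1 $L^2$-convergence $\hatuM{n}\to u(\cdot,t_n)$ combined with the $H^2$-boundedness of $g^n$, weak-$H^2$ compactness allows one to pass to the limit in the truncated equation \eqref{semi-disc-cnd-problem-truncation} and upgrade the convergence to strong $H^2$.
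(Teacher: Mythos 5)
Your treatment of the quantitative estimate is essentially the paper's own argument: both error identities from Lemma~\ref{lemma-error-ids} are estimated term by term (trapezoidal/Taylor expansion for the consistency terms, \eqref{f_M_cond_3} for the truncated nonlinear difference, which indeed requires no a priori $L^{\infty}$-control of the iterates), the $L^2$-recursion is closed first under the restriction $\deltat{}\le C_g C_{M,\gamma}$, and the resulting bound $\sup_n\|\hateM{n}\|_{L^2}\le C\deltat{}^s$ is fed into the $\ham$-recursion. Your bookkeeping for the second recursion (factoring $\|g^n\|^2-\|g^{n-1}\|^2$ and telescoping $\|g^n\|\le\sum_k\|X_k\|$ from $g^0=0$) differs cosmetically from the paper's Young-inequality-plus-multiplicative-Gronwall version, but it is valid and arguably cleaner; the summation $\sum_k\deltat{k}^{s-1/2}\|\partial_t^su\|_{L^2(I_k;H^2)}\lesssim\deltat{}^{s-1}$ and $\sum_k(\|e^k\|+\|e^{k-1}\|)\lesssim N\deltat{}^{s}\lesssim\deltat{}^{s-1}$ closes exactly as you indicate.

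The gap is in your argument for the limit statement $\lim_{\deltat{}\to0}\|\ham\hateM{n}\|_{L^2(\mathcal{D})}=0$ when $s=1$. You correctly observe that the rate only yields boundedness of $\|\ham\hateM{n}\|_{L^2}$, but the compactness repair you sketch does not close. From $\|\hateM{n}\|_{L^2}\to0$ and boundedness in $H^2$ you obtain only \emph{weak} $H^2$-convergence of $\hateM{n}$ to zero, which is compatible with $\|\ham\hateM{n}\|_{L^2}$ staying bounded away from zero (small-amplitude, highly oscillatory second derivatives). The proposed upgrade to strong convergence \emph{is} the assertion to be proved, and ``passing to the limit in the truncated equation'' has no clear meaning here: as $\deltat{}\to0$ the index $n$ and the entire time-stepping structure change, so there is no fixed equation in which to pass to the limit; any honest argument must control the telescoped sum $\|\ham\hateM{N}\|^2=\sum_n(\text{RHS}_n)$ of the error identity. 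The paper does exactly that: it observes that the only term in the quantitative bound that fails to vanish, namely the one crudely estimated by $\|\partial_t u\|_{L^2(0,T;H^2)}$, is in fact the accumulated trapezoidal-rule error $\deltat{}^{-1}|T_{\deltat{}}(\ham u)|$ for the integrals $\int_{t_{n-1}}^{t_n}\ham u$, and invokes a sharp trapezoidal-rule estimate (\cite{CrN02}) guaranteeing that this quantity tends to zero for integrands with only $W^{1,2}$-regularity in time --- essentially a density argument in the consistency functional rather than a compactness argument in the discrete solution. You would need to replace your final paragraph with an argument of this type.
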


\begin{proof}
Let either $s=1$ or $s=2$.
We estimate 
\begin{eqnarray*}
\nonumber\lefteqn{\|  \ham \hateM{n} \|_{L^2(\mathcal{D})}^2
= \|  \ham \hateM{n-1} \|_{L^2(\mathcal{D})}^2
+ \underset{=:\mbox{I}}{\underbrace{\frac{1}{\deltat{n}} \Re \int_{I_n} \int_{\mathcal{D}} 
\ham
\hspace{-2pt}\left(  u^{n} + u^{n-1} - 2u \right)  \ham \hspace{-2pt}\left( \hateM{n} - \hateM{n-1} \right)} }
 }\\
\nonumber&\enspace& \quad 
+ \underset{=:\mbox{II}}{\underbrace{\frac{1}{\deltat{n}} \Re \int_{I_n} \int_{\mathcal{D}} \left( 
  \GM(|u^{n}|^2,|u^{n-1}|^2) (u^{n} + u^{n-1}) - 2 \GM(|u|^2,|u|^2) u \right) \ham \hspace{-2pt}\left( \hateM{n} - \hateM{n-1} \right)}}  \\
\nonumber&\enspace& \quad 
+ \Re \int_{\mathcal{D}} \left( \GM(|\hatuM{n}|^2,|\hatuM{n-1}|^2)  (\hatuM{n} + \hatuM{n-1}) \right. \\
\nonumber&\enspace& \hspace{20pt} \underset{=:\mbox{III}}{\underbrace{\hspace{60pt} \left.
 - \GM(|u^{n}|^2,|u^{n-1}|^2) (u^{n} + u^{n-1}) \right) \ham \hspace{-2pt} \left( \hateM{n} - \hateM{n-1}
\right)}}.
\end{eqnarray*}
For the first term we obtain with the trapezoidal rule for fixed $x$ that
\begin{align*}
\left| \int_{I_n} \ham \hspace{-2pt}\left(  u^{n}(x) + u^{n-1}(x) - 2u(x,\cdot) \right) \right|
\le  C \deltat{n}^{(2s+1)/2} \|  \partial_{t}^{s} \ham u(x,\cdot) \|_{L^2(I_n)}.
\end{align*}
Hence for $C=C(V)$
\begin{align*}
\mbox{I} 
&\le C \deltat{n}^{(2s-1)/2}  \| \partial_{t}^{s} u \|_{L^2(I_n, H^2(\mathcal{D}))} \left( \| \ham \hateM{n} \|_{L^2(\mathcal{D})} + \| \ham \hateM{n-1} \|_{L^2(\mathcal{D})} \right) \\
&\le C \deltat{n}^{2(s-1)} \|  \partial_{t}^{s} u \|_{L^2(I_n, H^2(\mathcal{D}))}^2 + \frac{1}{3} \deltat{n} \left( \| \ham \hateM{n} \|^2_{L^2(\mathcal{D})} + \| \ham \hateM{n-1} \|^2_{L^2(\mathcal{D})} \right).
\end{align*}
In order to estimate the second term we split the error into several contributions.
\begin{eqnarray*}
\lefteqn{ \frac{1}{2} \left| \int_{I_n} \left( \GM(|u^{n}|^2,|u^{n-1}|^2) (u^{n} + u^{n-1}) - 2 \GM(|u|^2,|u|^2) u \right) \right| }\\
&=&  \frac{1}{2} \left| \int_{I_n} \left( \frac{\Gamma_M(|u^{n}|^2) - \Gamma_M(|u^{n-1}|^2)}{|u^{n}|^2 - |u^{n-1}|^2} (u^{n} + u^{n-1}) - 2 \gamma_M(|u|^2) u \right) \right|\\
&\le&
\underset{=:\mbox{II}_1}{\underbrace{\left| \int_{I_n} \left( \frac{\Gamma_M(|u^{n}|^2) - \Gamma_M(|u^{n-1}|^2)}{|u^{n}|^2 - |u^{n-1}|^2} \frac{u^{n} + u^{n-1}}{2} - \gamma_M\left( \left|\frac{u^{n}+u^{n-1}}{2}\right|^2\right) \frac{u^{n}+u^{n-1}}{2} \right) \right|}}\\
&\enspace& \quad +
\underset{=:\mbox{II}_2}{\underbrace{\left| \int_{I_n} \left( \gamma_M\left( \left|\frac{u^{n}+u^{n-1}}{2}\right|^2\right) \frac{u^{n}+u^{n-1}}{2} - \frac{ \gamma_M(|u^{n}|^2) u^{n} + \gamma_M(|u^{n-1}|^2) u^{n-1} }{ 2 } \right) \right|}}\\
\\
&\enspace& \quad +  
\underset{=:\mbox{II}_3}{\underbrace{\left| 
\int_{I_n} \left( 
\frac{ \gamma_M(|u^{n}|^2) u^{n} + \gamma_M(|u^{n-1}|^2) u^{n-1} }{ 2 }
- \gamma_M(|u|^2) u
 \right)
 \right|}}.
\end{eqnarray*}
For $\mbox{II}_1$ we can apply Lemma \ref{lemma01ap} to obtain
\begin{align*}
\mbox{II}_1 \le \deltat{n} |u^n-u^{n-1}|^2 \left( \frac{1}{4} M \gamma^{M,1} + \frac{4}{3} M^3 \gamma^{M,2} \right).
\end{align*}
For $\mbox{II}_2$ we 
denote $f(z):= \gamma_M(|z|^2) z$ and we let $\zeta_n: [0,1] \rightarrow [u^{n-1},u^{n}]$ denote the complex valued (linear) curve given by
$\zeta_n(s):=(1-s) u^{n-1} + s u^{n}$ for $s \in [0,1]$. We have $\zeta_n^{\prime}(z)=u^{n}-u^{n-1}$. With that, we get with the trapezoidal-rule and the midpoint rule that
\begin{eqnarray}
\nonumber\lefteqn{\mbox{II}_2 = \deltat{n} \left| \frac{f(u^{n}) +f(u^{n-1})}{2}  - f\left( \frac{u^{n}+u^{n-1}}{2} \right)\right|}\\
\nonumber&\le& \deltat{n} \left|  \frac{(f\circ\zeta_n)(0) +(f\circ\zeta_n)(1)}{2}  
- \int_{0}^{1} (f \circ \zeta_n)(s) \hspace{2pt} ds 
\right| + 
\deltat{n} \left|  \int_{0}^{1} (f \circ \zeta_n)(s) \hspace{2pt} ds  - (f \circ \zeta_n)\left( \frac{1}{2} \right) \right|\\
\nonumber &\le& \frac{\deltat{n}}{12} \| (f \circ \zeta_n)^{\prime \prime} \|_{L^{\infty}(0,1)} + \frac{\deltat{n}}{24} \| (f \circ \zeta_n)^{\prime \prime} \|_{L^{\infty}(0,1)} =  \frac{\deltat{n}}{8} \| (f \circ \zeta_n)^{\prime \prime} \|_{L^{\infty}(0,1)}\\
\nonumber &\le& C \deltat{n} \left( \gamma^{M,1} M + \gamma^{M,2} M^3 \right) |u^{n} - u^{n-1}|^2. 
\end{eqnarray}
For term $\mbox{II}_3$ we can directly apply the trapezoidal rule again to obtain
\begin{align*}
\mbox{II}_3
\le  C \deltat{n}^{5/2} \| \partial_{tt} \left( \gamma(|u(x,\cdot)|^2) u(x,\cdot) \right) \|_{L^2(I_n)}.
\end{align*}
Combining the estimates for $\mbox{II}_1$, $\mbox{II}_2$ and $\mbox{II}_3$ and applying the Young-inequality yields
\begin{eqnarray*}
\lefteqn{\mbox{II}
\le C 
\frac{1}{\deltat{n}} \left( \gamma^{M,1} M + \gamma^{M,2} M^3 \right)^2 \| u^{n} - u^{n-1} \|_{L^4(\mathcal{D})}^4 
+ C \| \deltat{} \hspace{2pt}\partial_{tt} \hspace{-2pt}\left( \gamma(|u|^2) u \right) \hspace{-2pt} \|_{L^2(\mathcal{D} \times I_n)}^2}\\
&\enspace& \qquad
+ \frac{1}{3} \deltat{n}
\|  \ham \hateM{n}  \|_{L^2(\mathcal{D})}^2 + \frac{1}{3} \deltat{n} \| \ham \hateM{n-1} \|_{L^2(\mathcal{D})}^2 \hspace{250pt}\\
&\le& C \left( \left( \gamma^{M,1} M + \gamma^{M,2} M^3 \right)^2 \| \sqrt{\deltat{}} \hspace{2pt} \partial_t u \|_{L^4(\mathcal{D} \times I_n)}^4
+ \| \deltat{} \hspace{2pt}\partial_{tt} \hspace{-2pt}\left( \gamma(|u|^2) u \right) \hspace{-2pt} \|_{L^2(\mathcal{D} \times I_n)}^2 \right)\\
&\enspace& \qquad
+ \frac{1}{3} \deltat{n}
\|  \ham \hateM{n}  \|_{L^2(\mathcal{D})}^2 + \frac{1}{3} \deltat{n} \| \ham \hateM{n-1} \|_{L^2(\mathcal{D})}^2 \hspace{250pt},
\end{eqnarray*}
where we used $\| u^{n} - u^{n-1} \|_{L^4(\mathcal{D})}^4 = \int_{\mathcal{D}} \left( \int_{I_n} \partial_t u \right)^4 \le \deltat{n} \| \sqrt{\deltat{}} \hspace{2pt} \partial_t u \|_{L^4(\mathcal{D} \times I_n)}^4$.
It remains to estimate term $\mbox{III}$ for which we can use $|\GM(\cdot,\cdot)|\le \gamma^{M,0}$ to see
\begin{eqnarray*}
\lefteqn{\mbox{III} \le \gamma^{M,0}  \int_{\mathcal{D}}  \left|  \hateM{n} + \hateM{n-1}
\right| \hspace{2pt} \left| \ham \hateM{n} - \ham \hateM{n-1}
\right|} \\
&\enspace& +  \int_{\mathcal{D}} \left|
\GM(|\hatuM{n}|^2,|\hatuM{n-1}|^2) - \GM(|u^{n}|^2,|u^{n-1}|^2) ( u^{n} + u^{n-1}) \right| \left| \ham \hateM{n} - \ham \hateM{n-1}
\right|\\
&\overset{\eqref{f_M_cond_3}}{\le}& \gamma^{M,0}  \| \hateM{n} + \hateM{n-1} \|_{L^2(\mathcal{D})} \hspace{2pt} \| \ham \hateM{n} - \ham \hateM{n-1} \|_{L^2(\mathcal{D})} \\
&\enspace& + C  
 \left(\sum_{k=1}^2 M^{2k-1} \gamma^{M,k}\right)
\int_{\mathcal{D}} \left| u^{n} - u^{n-1}  \right|^2 \left| \ham \hateM{n} - \ham \hateM{n-1}
\right|\\
&\enspace& + C \left(\sum_{k=0}^2 M^{2k} \gamma^{M,k}\right) \int_{\mathcal{D}} \left|  |\hateM{n}| + |\hateM{n-1}| \right| \left| \ham \hateM{n} - \ham \hateM{n-1}
\right|\\
&\le& C \deltat{n}^{-1} \left(\gamma^{M,0} + \sum_{k=0}^2 M^{2k} \gamma^{M,k}\right)^2 \left( \| \hateM{n} \|^2_{L^2(\mathcal{D})} + \| \hateM{n-1} \|^2_{L^2(\mathcal{D})} \right)  \\
&\enspace& + C  
 \left(\sum_{k=1}^2 M^{2k-1} \gamma^{M,k}\right)^2
\| \sqrt{\deltat{}} \hspace{2pt} \partial_t u \|_{L^4(\mathcal{D} \times I_n)}^4 + \frac{1}{3} \deltat{n} \hspace{2pt} \left( \| \ham \hateM{n} \|_{L^2(\mathcal{D})}^2 + \| \ham \hateM{n-1} \|_{L^2(\mathcal{D})}^2 \right).
\end{eqnarray*}
Combining the estimates for I, II and III yields
\begin{eqnarray}
\label{preliminary-estimate-laplace-hateM}\nonumber\lefteqn{(1 - \deltat{n}) \|  \ham \hateM{n} \|_{L^2(\mathcal{D})}^2
\le (1 + \deltat{n}) \|  \ham \hateM{n-1} \|_{L^2(\mathcal{D})}^2 + C \| \deltat{}^{s-1} \partial_{t}^{s} u \|_{L^2(I_n, H^2(\mathcal{D}))}^2 }\\
&\enspace&+ C \deltat{n}^{-1} \left(\sum_{k=0}^2 M^{2k} \gamma^{M,k}\right)^2 \left( \| \hateM{n} \|^2_{L^2(\mathcal{D})} + \| \hateM{n-1} \|^2_{L^2(\mathcal{D})} \right) \\
\nonumber&\enspace& + C  
 \left(\sum_{k=1}^2 M^{2k-1} \gamma^{M,k}\right)^2
\| \sqrt{\deltat{}} \hspace{2pt} \partial_t u \|_{L^4(\mathcal{D} \times I_n)}^4 + C \| \deltat{} \hspace{2pt}\partial_{tt} \hspace{-2pt}\left( \gamma(|u|^2) u \right) \hspace{-2pt} \|_{L^2(\mathcal{D} \times I_n)}^2.
\end{eqnarray}
Next, we need to show that $\deltat{n}^{-1} \| \hateM{n} \|^2_{L^2(\mathcal{D})}$ is an $\mathcal{O}(\deltat{})$-term. For that reason, we start from the identity \eqref{error-identity-L2} this time, where we observe that we have just the desired $\mathcal{O}(\deltat{n})$ more in our terms. Proceeding analogously as before yields
\begin{eqnarray}
\nonumber\lefteqn{(1 - C \deltat{n} C_{M,\gamma}) \| \hateM{n} \|_{L^2(\mathcal{D})}^2
\le (1 +C \deltat{n} C_{M,\gamma} ) \| \hateM{n-1} \|_{L^2(\mathcal{D})}^2 + C 
\| \deltat{}^{s} \partial_{t}^{s} u \|_{L^2(I_n, H^2(\mathcal{D}))}^2
}\\
\label{L2-estimate-preliminary-with-bound}&\enspace& + C  
 \left(\sum_{k=1}^2 M^{2k-1} \gamma^{M,k}\right)^2
\| \deltat{} \hspace{2pt} \partial_t u \|_{L^4(\mathcal{D} \times I_n)}^4 + C \| \deltat{}^2 \hspace{2pt}\partial_{tt} \hspace{-2pt}\left( \gamma(|u|^2) u \right) \hspace{-2pt} \|_{L^2(\mathcal{D} \times I_n)}^2.
\end{eqnarray}
Assuming that $\deltat{n}$ is small enough so that $C \deltat{n} C_{M,\gamma} \le \frac{1}{2}$ we can divide by $(1 - C \deltat{n} C_{M,\gamma})$ on both sides of \eqref{L2-estimate-preliminary-with-bound}. Applying the arising inequality iteratively and exploiting that $\hateM{0}=0$ yields
\begin{multline*}
\| \hateM{n} \|_{L^2(\mathcal{D})}^2 \le C e^{C t_{n-1} C_{M,\gamma}} \Biggl(\| \deltat{}^{s} \partial_{t}^{s} u \|_{L^2(0,t_n; H^2(\mathcal{D}))}^2 + \biggl(\sum_{k=1,2} M^{2k-1} \gamma^{M,k}\biggr)^2
\| \deltat{} \hspace{2pt} \partial_t u \|_{L^4(\mathcal{D} \times (0,t_n) )}^4\\
+
\| \deltat{}^2 \hspace{2pt}\partial_{tt} \hspace{-2pt}\left( \gamma(|u|^2) u \right) \hspace{-2pt} \|_{L^2(\mathcal{D} \times(0,t_n) )}^2
  \Biggr).
\end{multline*}
Now we can plug this estimate into \eqref{preliminary-estimate-laplace-hateM} to get
\begin{eqnarray*}
\lefteqn{\|  \ham \hateM{n} \|_{L^2(\mathcal{D})}^2
\le \frac{(1 + \deltat{n})}{(1 - \deltat{n}) } \|  \ham \hateM{n-1} \|_{L^2(\mathcal{D})}^2 + C 
\| \deltat{}^{s-1 \partial_{t}^{s}} u \|_{L^2(I_n, H^2(\mathcal{D}))}^2
}\\
&\enspace&+ C \deltat{n} \left(\sum_{k=0}^2 M^{2k} \gamma^{M,k}\right)^2 e^{C t_{n-1} C_{M,\gamma}} 
\| \deltat{}^s \partial_{t}^{{s-1}} u \|_{L^2(0,t_n; H^2(\mathcal{D}))}^2
\\
&\enspace&+ C \deltat{n} \left(\sum_{k=0}^2 M^{2k} \gamma^{M,k}\right)^2
e^{C t_{n-1} C_{M,\gamma}} \left(\sum_{k=1}^2 M^{2k-1} \gamma^{M,k}\right)^2
\| \sqrt{\deltat{}} \hspace{2pt} \partial_t u \|_{L^4(\mathcal{D} \times (0,t_n) )}^4 \\
&\enspace&+ C \deltat{n} \left(\sum_{k=0}^2 M^{2k} \gamma^{M,k}\right)^2  e^{C t_{n-1} C_{M,\gamma}}
\| \deltat{} \partial_{tt} \hspace{-2pt}\left( \gamma(|u|^2) u \right) \hspace{-2pt} \|_{L^2(\mathcal{D} \times(0,t_n) )}^2 \\
&\enspace& + C  
 \left(\sum_{k=1}^2 M^{2k-1} \gamma^{M,k}\right)^2
\| \sqrt{\deltat{}} \hspace{2pt} \partial_t u \|_{L^4(\mathcal{D} \times I_n)}^4 + C \| \deltat{} \hspace{2pt}\partial_{tt} \hspace{-2pt}\left( \gamma(|u|^2) u \right) \hspace{-2pt} \|_{L^2(\mathcal{D} \times I_n)}^2.
\end{eqnarray*}
Observe that this step exploits the quasi-uniformity of the time-discretization. Applying this inequality iteratively gives us
\begin{multline}\label{gtum}
\|  \ham \hateM{n} \|_{L^2(\mathcal{D})}
\lesssim\\ e^{t_n} 
\| \deltat{}^{s-1} \partial_{t}^{{s}} u \|_{L^2(0,t_n; H^2(\mathcal{D}))}
+ \left(\sum_{k=0}^2 M^{2k} \gamma^{M,k}\right) e^{C t_{n} (1+C_{M,\gamma})} 
\| \deltat{}^{s-1} \partial_{t}^{{s}} u \|_{L^2(0,t_n; H^2(\mathcal{D}))}
\\
+ \left(1 + \sum_{k=0}^2 M^{2k} \gamma^{M,k}\right)
e^{C t_{n} (1+C_{M,\gamma})} \left(\sum_{k=1}^2 M^{2k-1} \gamma^{M,k}\right)
\| \sqrt{\deltat{}} \hspace{2pt} \partial_t u \|_{L^4(\mathcal{D} \times (0,t_n) )}^2
\\
+ \left( 1+ \sum_{k=0}^2 M^{2k} \gamma^{M,k}\right) e^{C t_{n} (1+C_{M,\gamma})}
\| \deltat{} \partial_{tt} \hspace{-2pt}\left( \gamma(|u|^2) u \right) \hspace{-2pt} \|_{L^2(\mathcal{D} \times(0,t_n) )}.
\end{multline}
This proves the estimates in the lemma. 
It remains to verify $\lim_{\deltat{}\rightarrow 0} \| \ham \hateM{n} \|_{L^2(\mathcal{D})} = 0$ for $s=1$. 
Checking the previous estimates carefully, we see that the problematic terms $\| \partial_{t} u \|_{L^2(0,t_n; H^2(\mathcal{D}))}$ (which prevent the right hand side of \eqref{gtum} to converge to zero) can be replaced by $\deltat{}^{-1} |T_{\deltat{}}(\ham u)|$, where $T_{\deltat{}}(\ham u)$ is the error introduced by the trapezoidal rule applied to the individual integrals $\int_{t_{n-1}}^{t_n} \ham u$ (and summed up). It can be shown that the error $\deltat{}^{-1} |T_{\deltat{}}(\ham u)|$ converges to zero without additional regularity assumption, cf. \cite[Theorem 1.13]{CrN02}. This finishes the proof of the lemma.
\end{proof}

\begin{remark}
It holds
\begin{align*}
| \partial_{tt} \left( \gamma(|u|^2) u \right) |  &\le
  \gamma(|u|^2) |\partial_{tt} u|
  + 2 \gamma^{\prime}(|u|^2) |u| \left( |\partial_{tt} u| \hspace{2pt} |u| + 3 |\partial_t u|^2 \right)
  + 4 \gamma^{\prime\prime}(|u|^2) \hspace{2pt} |u|^3 |\partial_t u |^2,
\end{align*}
Hence, $\| \partial_{tt} \left( \gamma(|u|^2) u \right) \|_{L^2(\mathcal{D} \times (0,T))}$ can be bounded if $\partial_t u \in L^4(\mathcal{D} \times (0,T))$ and
$\partial_{tt} u \in L^2(\mathcal{D} \times (0,T))$. Note that the latter one implies $u\in L^{\infty}(0,T;L^2(\mathcal{D}))$.
\end{remark}

\begin{lemma}
\label{lemma-L-infty-bounds-for-semidiscrete-trunc-approx}
Consider the setting of Lemma \ref{lemma-bound-laplacian-hateM}. There exist constants $C_{\mathcal{D}}$ that only depends on $\mathcal{D}$ and $\hat{\deltat{}}>0$ such that any solution $\hatuM{n}$ to \eqref{semi-disc-cnd-problem-truncation} fulfills the bound
\begin{align*}
\sup_{0\le n \le N} \| \hatuM{n} \|_{L^{\infty}(\mathcal{D})} \le 2 \| u \|_{L^{\infty}(\mathcal{D} \times (0,T))} 
=:M
\end{align*}
for all partitions with $\sup_{0\le n \le N}\deltat{n}<\hat{\deltat{}}$.
\end{lemma}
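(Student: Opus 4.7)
The strategy is to bootstrap the $L^2$-control of $\mathcal{H}\hateM{n}$ furnished by Lemma~\ref{lemma-bound-laplacian-hateM} into an $L^\infty$-bound on the error $\hateM{n}=\hatuM{n}-u^n$, via elliptic regularity on the convex polyhedron combined with the Sobolev embedding $H^2(\mathcal{D})\hookrightarrow L^\infty(\mathcal{D})$ available in dimensions $d\le 3$. Then the triangle inequality against $\|u^n\|_{L^\infty}\le \|u\|_{L^\infty(\mathcal{D}\times(0,T))}$ closes the bound by the very definition of $M$.

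More precisely, I would fix $M:=2\|u\|_{L^\infty(\mathcal{D}\times(0,T))}$, note that $M\ge \|u\|_{L^\infty(\mathcal{D}\times(0,T))}$ as required by Lemma~\ref{truncation-lemma}, and invoke Lemma~\ref{lemma-bound-laplacian-hateM} with $s=1$: under the smallness condition $\deltat{}\le C_g C_{M,\gamma}$ and assumption (R1), we obtain both $\|\hateM{n}\|_{L^2(\mathcal{D})}\lesssim \deltat{}$ and, crucially, $\|\mathcal{H}\hateM{n}\|_{L^2(\mathcal{D})}\to 0$ as $\deltat{}\to 0$, uniformly in $n$. Since $u^n\in H^2(\mathcal{D})$ by (R1) and $V\in L^\infty$, the identity $\Delta \hatuM{n} = \mathcal{H}\hatuM{n}+V\hatuM{n}$ together with convex-polyhedral elliptic regularity for the Dirichlet Laplacian shows that $\hatuM{n}\in H^1_0(\mathcal{D})\cap H^2(\mathcal{D})$, hence so does $\hateM{n}$.

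Applying elliptic regularity to $\hateM{n}\in H^1_0\cap H^2$ on the convex polyhedron $\mathcal{D}$ yields
\[
\|\hateM{n}\|_{H^2(\mathcal{D})}\lesssim \|\Delta\hateM{n}\|_{L^2(\mathcal{D})}\le \|\mathcal{H}\hateM{n}\|_{L^2(\mathcal{D})} + \|V\|_{L^\infty(\mathcal{D})}\|\hateM{n}\|_{L^2(\mathcal{D})}.
\]
Combining with the Sobolev embedding $H^2(\mathcal{D})\hookrightarrow L^\infty(\mathcal{D})$ (valid for $d\le 3$), we deduce
\[
\sup_{0\le n\le N}\|\hateM{n}\|_{L^\infty(\mathcal{D})} \le C_\mathcal{D}\Bigl(\sup_{0\le n\le N}\|\mathcal{H}\hateM{n}\|_{L^2(\mathcal{D})} + \|V\|_{L^\infty(\mathcal{D})}\sup_{0\le n\le N}\|\hateM{n}\|_{L^2(\mathcal{D})}\Bigr) \longrightarrow 0
\]
as $\deltat{}\to 0$. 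Choose $\hat{\deltat{}}>0$ small enough (and not larger than $C_g C_{M,\gamma}$) so that this supremum is bounded by $\|u\|_{L^\infty(\mathcal{D}\times(0,T))}$; then
\[
\|\hatuM{n}\|_{L^\infty(\mathcal{D})} \le \|u^n\|_{L^\infty(\mathcal{D})} + \|\hateM{n}\|_{L^\infty(\mathcal{D})}\le 2\|u\|_{L^\infty(\mathcal{D}\times(0,T))}=M
\]
for all $0\le n \le N$ and all $\deltat{}<\hat{\deltat{}}$.

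The main delicate point will be the justification that $\hatuM{n}\in H^2(\mathcal{D})$ so as to legitimately invoke the elliptic regularity estimate on convex polyhedra; this hinges on $\mathcal{D}$ being a convex polyhedron (so that the Dirichlet Laplacian is $H^2$-regular) and on $V\in L^\infty$. A secondary, purely bookkeeping, issue is that the convergence $\|\mathcal{H}\hateM{n}\|_{L^2}\to 0$ from Lemma~\ref{lemma-bound-laplacian-hateM} must be uniform in $n$; this is already embedded in the statement of that lemma since the bound \eqref{gtum} is a supremum over $n$. Note that any solution of \eqref{semi-disc-cnd-problem-truncation} is covered, so uniqueness is not needed for this step.
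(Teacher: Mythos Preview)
Your proposal is correct and follows essentially the same approach as the paper: both convert the $L^2$-control of $\mathcal{H}\hateM{n}$ from Lemma~\ref{lemma-bound-laplacian-hateM} into an $L^\infty$-bound on $\hateM{n}$ via elliptic regularity on the convex polyhedron and the Sobolev embedding $H^2\hookrightarrow L^\infty$, then close by the triangle inequality with the choice $\eps=\|u\|_{L^\infty(\mathcal{D}\times(0,T))}$. The only cosmetic difference is that the paper cites directly an estimate $\|w\|_{L^\infty}\le C(\mathcal{D},V)\|\mathcal{H}w\|_{L^2}$, whereas you split $\Delta=\mathcal{H}+V$ and carry the extra term $\|V\|_{L^\infty}\|\hateM{n}\|_{L^2}$ explicitly; since this term is also $o(1)$ by Lemma~\ref{lemma-bound-laplacian-hateM}, the two arguments are equivalent.
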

\begin{proof}
Since $\mathcal{D}$ is convex, $d\le 3$ and since the potential $V \in L^{\infty}(\mathcal{D})$ is non-negative, we have from elliptic regularity theory that for any $w \in H^1_0(\mathcal{D}) \cap H^2(\mathcal{D})$ with $- \triangle w + Vw = f(w) \in L^2(\mathcal{D})$ the solution $w$ is continuous on $\overline{\mathcal{D}}$ and it holds (cf. \cite{GiT01})
\begin{align}
\label{bound-lmma-stp-2}
\| w \|_{L^{\infty}(\mathcal{D})} \le C(\mathcal{D},V) \| f(w) \|_{L^2(\mathcal{D})} = C(\mathcal{D},V) \| \ham w \|_{L^2(\mathcal{D})},
\end{align}
where $C(\mathcal{D},V)$ only depends on $\mathcal{D}$, $V$ and $d$.
Since $\ham \hateM{n} \in L^2(\mathcal{D})$ we can apply \eqref{bound-lmma-stp-2} together with Lemma \ref{lemma-bound-laplacian-hateM} to conclude that for every $\eps>0$ there exists a $\hat{\deltat{}}(\eps)$ such that for all $\deltat{}\le \hat{\deltat{}}(\eps)$ it holds
\begin{align}
\label{bound-lmma-stp-3}
\sup_{0\le n \le N} \| \hateM{n} \|_{L^{\infty}(\mathcal{D})} \le C(\mathcal{D},V) \sup_{0\le n \le N} \|  \ham \hateM{n} \|_{L^2(\mathcal{D})} \le \eps.
\end{align}
This implies
\begin{eqnarray*}
\sup_{0\le n \le N} \| \hatuM{n} \|_{L^{\infty}(\mathcal{D})} \le 
\sup_{0\le n \le N} \| u^n \|_{L^{\infty}(\mathcal{D})} +\sup_{0\le n \le N} \| \hateM{n} \|_{L^{\infty}(\mathcal{D})}
\le \sup_{0\le n \le N} \| u^n \|_{L^{\infty}(\mathcal{D})} + \eps. 
\end{eqnarray*}
The choice $\eps=\sup_{0\le n \le N} \| u^n \|_{L^{\infty}(\mathcal{D})}$ proves the lemma.
\end{proof}

\subsection{Existence of uniformly $L^{\infty}$-bounded solutions to the semi-discrete scheme and corresponding $L^2$-error estimates}

We are now prepared to proceed with the analysis of the original (non-truncated) semi-discrete scheme.
\begin{theorem}
\label{main-theorem-1}
Let $\partial_t u \in L^4(\mathcal{D} \times (0,T))$, $\partial_{tt} u \in L^2(\mathcal{D} \times (0,T))$ and $\partial_{t}^{s} u \in L^2(0,T;H^2(\mathcal{D}))$
for $s=1$ or $s=2$.
There is a real number $\hat{\deltat{}}>0$ such that for all partitions with $\sup_{0\le n \le N}\deltat{n}<\hat{\deltat{}}$ there exists a unique solution $\hatu^{n}\in H^1_0(\mathcal{D})$ to the semi-discrete Crank-Nicolson scheme \eqref{semi-disc-cnd-problem} with
\begin{align*}
\sup_{0\le n \le N} \| \hatu^{n} \|_{L^{\infty}(\mathcal{D})} \le M
\end{align*}
where $ M:= 2 \| u \|_{L^{\infty}(\mathcal{D} \times (0,T))}$. 
Moreover, the uniform $H^2$ bound 
\begin{align*}
\sup_{0\le n \le N} \| \hatu^{n} \|_{H^{2}(\mathcal{D})} \le M + \sup_{0\le n \le N} \| u^n \|_{H^{2}(\mathcal{D})} 
\end{align*}
and the error estimate
\begin{align*}
\sup_{0\le n \le N} \| \hatu^{n} - u^n \|_{L^{2}(\mathcal{D})}  + \deltat{} \sup_{0\le n \le N} \left( \| \hatu^{n} - u^n \|_{L^{\infty}(\mathcal{D})} +  \| \hatu^{n} - u^n \|_{H^{2}(\mathcal{D})} \right)
\le C(u,\gamma,V,\mathcal{D}) \deltat{}^{{s}}
\end{align*}
hold true. Any other family of semi-discrete solutions must necessarily blow up in the sense that $\sup_{0\le n \le N} \| \hatu^{n} \|_{L^{\infty}(\mathcal{D})}\rightarrow \infty$ as $\deltat{}\rightarrow 0$.
\end{theorem}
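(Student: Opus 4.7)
The plan is to transfer the existence, $L^{\infty}$-boundedness and convergence results already established for the truncated scheme \eqref{semi-disc-cnd-problem-truncation} to the original scheme \eqref{semi-disc-cnd-problem}. The pivotal observation is property \eqref{f_M_cond_1}: $\gamma_M(|z|^2)=\gamma(|z|^2)$ whenever $|z|\le M$. Combined with Lemma \ref{existence-of-semi-disc-trunc-solutions} (existence of $\hatuM{n}$) and Lemma \ref{lemma-L-infty-bounds-for-semidiscrete-trunc-approx} (which guarantees $\sup_n\|\hatuM{n}\|_{L^{\infty}(\mathcal{D})}\le M$ as soon as $\deltat{}$ lies below some threshold $\hat\deltat{}$), this shows that any such truncated solution automatically solves the non-truncated scheme. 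Setting $\hatu^n:=\hatuM{n}$ simultaneously yields existence of a solution to \eqref{semi-disc-cnd-problem} and the asserted $L^{\infty}$-bound.

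For the error estimates I would rely directly on Lemma \ref{lemma-bound-laplacian-hateM}, which furnishes both $\|\hateM{n}\|_{L^2(\mathcal{D})}\lesssim \deltat{}^s$ and $\deltat{}\|\ham\hateM{n}\|_{L^2(\mathcal{D})}\lesssim \deltat{}^s$. To upgrade to $L^{\infty}$- and $H^2$-rates I would invoke, exactly as in the proof of Lemma \ref{lemma-L-infty-bounds-for-semidiscrete-trunc-approx}, the elliptic regularity estimate \eqref{bound-lmma-stp-2} on the convex domain with nonnegative $V\in L^\infty$, which gives $\|w\|_{L^{\infty}(\mathcal{D})}+\|w\|_{H^2(\mathcal{D})}\lesssim \|\ham w\|_{L^2(\mathcal{D})}$ for any $w\in H^1_0(\mathcal{D})$ with $\ham w\in L^2(\mathcal{D})$. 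Applied to $w=\hateM{n}$ this delivers $\deltat{}(\|\hateM{n}\|_{L^{\infty}(\mathcal{D})}+\|\hateM{n}\|_{H^{2}(\mathcal{D})})\lesssim \deltat{}^s$. The uniform $H^2$-bound on $\hatu^n$ then follows by the triangle inequality $\|\hatu^n\|_{H^{2}(\mathcal{D})}\le \|u^n\|_{H^{2}(\mathcal{D})}+\|\hateM{n}\|_{H^{2}(\mathcal{D})}$ together with the fact that $\|\hateM{n}\|_{H^{2}(\mathcal{D})}$ can be made $\le M$ by shrinking $\hat\deltat{}$ if necessary (using the $\lim_{\deltat{}\to 0}\|\ham\hateM{n}\|_{L^2(\mathcal{D})}=0$ assertion of Lemma \ref{lemma-bound-laplacian-hateM} in the borderline case $s=1$).

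The subtlest step, and the one I expect to be the principal obstacle, is the uniqueness claim, which must be read as follows: any other family $\{\tilde u^n\}$ of solutions to \eqref{semi-disc-cnd-problem} that remains uniformly $L^{\infty}$-bounded as $\deltat{}\to 0$ must coincide with $\hatu^n$. I would argue by induction on $n$, the case $n=0$ being trivial since $\hatu^0=\tilde u^0=u^0$. Assuming $\sup_n\|\tilde u^n\|_{L^\infty(\mathcal{D})}\le K$ uniformly in $\deltat{}$, the family $\tilde u^n$ also solves \eqref{semi-disc-cnd-problem-truncation} with truncation parameter $M':=\max(M,K)$. Assuming inductively $\hatu^{n-1}=\tilde u^{n-1}$, one subtracts the two instances of \eqref{semi-disc-cnd-problem-truncation}, tests against $\hatu^n-\tilde u^n$ and takes the real part. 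The linear Hamiltonian terms drop out, and the remaining nonlinear contribution is controlled by $C(M')\deltat{n}\|\hatu^n-\tilde u^n\|_{L^2(\mathcal{D})}^2$ via the smoothness of $\gamma_{M'}$ (the quotient $\GM$ from \eqref{def-GM} is globally Lipschitz in its first argument for fixed second argument on $\{|z|\le M'\}$ with constant depending only on $\gamma^{M',k}$, $k=0,1$, and $M'$). For $\deltat{n}$ below a further threshold this forces $\hatu^n=\tilde u^n$, closing the induction. The main technical challenge is the careful bookkeeping of the various thresholds for $\hat\deltat{}$ coming from Lemmas \ref{existence-of-semi-disc-trunc-solutions}, \ref{lemma-L-infty-bounds-for-semidiscrete-trunc-approx}, \ref{lemma-bound-laplacian-hateM} and the contraction step, so that existence, $L^{\infty}$-boundedness, error estimates and uniqueness are all valid on the same range of admissible time steps.
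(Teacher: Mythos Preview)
Your proposal is correct and follows essentially the same route as the paper: existence and the $L^\infty$-bound come from identifying $\hatu^n=\hatuM{n}$ via Lemmas \ref{existence-of-semi-disc-trunc-solutions} and \ref{lemma-L-infty-bounds-for-semidiscrete-trunc-approx}; the error estimates and $H^2$-bound come from Lemma \ref{lemma-bound-laplacian-hateM} upgraded through the elliptic estimate \eqref{bound-lmma-stp-2}; and uniqueness follows by subtracting, testing with the difference, and observing that the linear Hamiltonian terms vanish while the nonlinear remainder is absorbed for small $\deltat{}$. The only cosmetic difference is that the paper first establishes uniqueness under the bound $M$ and then argues by contradiction that no other uniformly bounded family can exist, whereas you pass directly to the truncation at $M'=\max(M,K)$; the two arguments are equivalent.
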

\begin{proof}
From Lemmas \ref{existence-of-semi-disc-trunc-solutions} and \ref{lemma-L-infty-bounds-for-semidiscrete-trunc-approx} we immediately have the existence of $\hatu^{n}=\hatuM{n}$ and the uniform $L^{\infty}$-bound. The uniform $H^2$-bound follows from $\sup_{0\le n \le N}\| \hatu^{n} \|_{H^{2}(\mathcal{D})} \le \sup_{0\le n \le N}\| \hatu^{n} - u^n \|_{H^{2}(\mathcal{D})} + \sup_{0\le n \le N}\| u^n \|_{H^{2}(\mathcal{D})}$, where $\sup_{0\le n \le N}\| \hatu^{n} - u^n \|_{H^{2}(\mathcal{D})} \rightarrow 0$ for $\deltat{}\rightarrow 0$ with Lemma \ref{lemma-bound-laplacian-hateM}. The error estimates also follow directly from Lemma \ref{lemma-bound-laplacian-hateM} via \eqref{bound-lmma-stp-2}. It remains to show the uniqueness of $\hatu^{n}$. Let therefore $\hatu^{(1),n},\hatu^{(2),n} \in H^1_0(\mathcal{D})$ denote two solutions to the scheme \eqref{semi-disc-cnd-problem} for $n\ge1$ with $\| \hatu^{(1),n} \|_{L^{\infty}(\mathcal{D})},\| \hatu^{(2),n} \|_{L^{\infty}(\mathcal{D})} \le M$ and the same starting value, i.e. with $\hatu^{(1),n-1}=\hatu^{(2),n-1}=\hatu^{n-1}$ and $\| \hatu^{n-1} \|_{L^{\infty}(\mathcal{D})} \le M$.
By exploiting \eqref{semi-disc-cnd-problem} for $\hatu^{(1),n}$ and $\hatu^{(2),n}$ and by testing with $v=\hatu^{(1),n}-\hatu^{(2),n}$ we obtain
\begin{eqnarray*}
\lefteqn{ \| \hatu^{(1),n} - \hatu^{(2),n} \|_{L^{2}(\mathcal{D})}^2 }\\
&=& \frac{\deltat{n}}{2} \hspace{2pt}
\Im\Ltwo{\nabla (\hatu^{(1),n}+\hatu^{n-1}) - \nabla (\hatu^{(2),n}+\hatu^{n-1}) }{\nabla \hatu^{(1),n} - \nabla \hatu^{(2),n}}\\
&\enspace&
+
\frac{\deltat{n}}{2} \hspace{2pt}
\Im \Ltwo{V \hspace{2pt} \left((\hatu^{(1),n}+\hatu^{n-1}) - (\hatu^{(2),n}+\hatu^{n-1}) \right) }{ \hatu^{(1),n} - \hatu^{(2),n} } \\
&\enspace& + \deltat{n} \hspace{2pt} \Im \Ltwobig{ 
\frac{\Gamma(|\hatu^{(1),n}|^2) - \Gamma(|\hatu^{n-1}|^2)}{|\hatu^{(1),n}|^2 - |\hatu^{n-1}|^2} (\hatu^{(1),n} - \hatu^{(2),n})
}{  \hatu^{(1),n} - \hatu^{(2),n} }
\\
&\enspace& + \deltat{n} \hspace{2pt} \Im \Ltwobig{ 
\left( \frac{\Gamma(|\hatu^{(1),n}|^2) - \Gamma(|\hatu^{n-1}|^2)}{|\hatu^{(1),n}|^2 - |\hatu^{n-1}|^2}
-
\frac{\Gamma(|\hatu^{(2),n}|^2) - \Gamma(|\hatu^{n-1}|^2)}{|\hatu^{(2),n}|^2 - |\hatu^{n-1}|^2} \right) \hatu^{(2),n-\frac{1}{2}}
}{  \hatu^{(1),n} - \hatu^{(2),n} } \\
&=& \deltat{n} \hspace{2pt} \Im \Ltwobig{ 
\left( \frac{\Gamma(|\hatu^{(1),n}|^2) - \Gamma(|\hatu^{n-1}|^2)}{|\hatu^{(1),n}|^2 - |\hatu^{n-1}|^2}
-
\frac{\Gamma(|\hatu^{(2),n}|^2) - \Gamma(|\hatu^{n-1}|^2)}{|\hatu^{(2),n}|^2 - |\hatu^{n-1}|^2} \right) \hatu^{(2),n-\frac{1}{2}}
}{  \hatu^{(1),n} - \hatu^{(2),n} } \\
&\le& \deltat{n} \hspace{2pt} C(M) \| \hatu^{(1),n} - \hatu^{(2),n} \|_{L^{2}(\mathcal{D})}^2,
\end{eqnarray*}
which is a contradiction for $\deltat{n}<C(M)^{-1}$. Hence, we have uniqueness under the condition $\sup_{0\le n \le N} \| \hatu^{n} \|_{L^{\infty}(\mathcal{D})} \le M$. Observe that if there exists a another solution $\hat{u}_{\deltat{}}^{n}$ with $M<\sup_{0\le n \le N} \| \hat{u}_{\deltat{}}^{n} \|_{L^{\infty}(\mathcal{D})} \le \tilde{M} < \infty$, then the arguments remain the same and we conclude again uniqueness, which however then contradicts $\sup_{0\le n \le N} \| \hatu^{n} \|_{L^{\infty}(\mathcal{D})} < \sup_{0\le n \le N} \| \hat{u}_{\deltat{}}^{n} \|_{L^{\infty}(\mathcal{D})}$. From this we see that the only other solutions to \eqref{semi-disc-cnd-problem} are the ones with a diverging $L^{\infty}$-norm and which hence cannot approximate the smooth exact solution $u$.
\end{proof}

\section{Error analysis for the fully-discrete method}\label{s:errorfull}

In this section we shall analyze the final fully discrete Crank-Nicolson discretization stated in Definition \ref{crank-nic-gpe}. As for the semi-discrete method we need to take a detour over an auxiliary problem. For that reason we consider the following scheme with truncated nonlinearity.
\begin{definition}[Fully-discrete Crank-Nicolson Method with truncation]
\label{truncated-fully-discrete-crank-nic-gpe}
Let $M:= 2 \| u \|_{L^{\infty}(\mathcal{D} \times (0,T))}$ 
and let $\deltat{}$ be small enough so that the results of Theorem \ref{main-theorem-1} are valid.  For $\Gamma_M$ given as in Lemma \ref{truncation-lemma}, for $\hatuhM{0}:=P_h(u^0)\in \Sh$ and for $n \ge 1$, we define the truncated fully-discrete Crank-Nicolson approximation $\hatuhM{n} \in \Sh$ as the solution to
\begin{eqnarray}
\label{fully-disc-cnd-problem-truncation}\nonumber\lefteqn{\Ltwo{ \hatuhM{n} }{v_h} +
\deltat{n} \hspace{2pt}\ci \hspace{2pt}
\Ltwo{\nabla \hatuhM{n-\frac{1}{2}}}{\nabla v_h}
+
\deltat{n} \hspace{2pt}\ci \hspace{2pt}
\Ltwo{V \hatuhM{n-\frac{1}{2}}}{ v_h}}
\\
&+& \deltat{n} \hspace{2pt}\ci \hspace{2pt} \Ltwo{ \frac{\Gamma_M(|\hatuhM{n}|^2) - \Gamma_M(|\hatuhM{n-1}|^2)}{|\hatuhM{n}|^2 - |\hatuhM{n-1}|^2} \hatuhM{n-\frac{1}{2}}}{v_h} = \Ltwo{ \hatuhM{n-1}}{v_h} 
\end{eqnarray}
for all $v_h \in \Sh$ and where $\hatuhM{n-\frac{1}{2}}:=(\hatuhM{n}+\hatuhM{n-1})/2$.
\end{definition}
Again, we have existence of solutions.
\begin{lemma}
\label{existence-of-fully-disc-trunc-solutions}
For every $n\ge1$ there exists at least one solution $\hatuhM{n} \in \Sh$ to the truncated problem \eqref{fully-disc-cnd-problem-truncation}.
\end{lemma}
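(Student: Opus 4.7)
The plan is to mimic \emph{Step~1} in the proof of Lemma~\ref{existence-of-semi-disc-trunc-solutions}, now applied directly in the finite-dimensional space $\Sh$. Since $\Sh$ is already finite-dimensional, no passage to the limit is required and Brouwer's fixed-point theorem (Lemma~\ref{brouwer-lemma}) alone will suffice to produce a solution.

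I would proceed by induction on $n$. The case $n=0$ is trivial, since $\hatuhM{0}:=P_h(u^0)\in\Sh$ by definition. For the induction step, assume that $\hatuhM{n-1}\in\Sh$ has been constructed. Fix a basis $\{\phi_1,\dots,\phi_K\}$ of $\Sh$ with $K:=\dim\Sh$, write a candidate as $z_\alpha=\sum_{m=1}^K\alpha_m\phi_m$, and define $g\colon\C^K\to\C^K$ componentwise so that $g_\ell(\alpha)$ is, up to a factor of $-\ci/\deltat{n}$, the residual obtained by inserting $z_\alpha$ in place of $\hatuhM{n}$ in \eqref{fully-disc-cnd-problem-truncation} and testing with $v_h=\phi_\ell$. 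Zeros of $g$ then correspond bijectively to solutions $\hatuhM{n}$ of the truncated fully-discrete scheme.

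Next, I would verify that $\Re(g(\alpha)\cdot\alpha)\ge 0$ on a sufficiently large sphere in $\C^K$. The computation is essentially the one already carried out in Step~1 of the semi-discrete case: the Dirichlet part contributes $\tfrac12\|\nabla z_\alpha\|_{L^2(\mathcal{D})}^2$, the potential term is nonnegative because $V\ge 0$, and both the data term involving $\hatuhM{n-1}$ and the truncated nonlinearity grow at most linearly in $\|z_\alpha\|_{L^2(\mathcal{D})}$ thanks to the uniform bound $\|\gamma_M\|_{L^{\infty}(0,\infty)}\le C\gamma^{M,0}$ from \eqref{f_M_cond_3a}. Using norm equivalence on the finite-dimensional space $\Sh$ (with constants possibly depending on the fixed mesh-size $h$, which is harmless here), this yields a lower bound of the form $\Re(g(\alpha)\cdot\alpha)\ge C_3|\alpha|(|\alpha|-C_1)-C_2$ with $\alpha$-independent constants $C_1,C_2,C_3>0$. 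For $|\alpha|$ large enough this expression is nonnegative, so Lemma~\ref{brouwer-lemma} (applied after rescaling to the unit ball) provides a zero $\alpha^{\ast}\in\C^K$ of $g$, and $\hatuhM{n}:=\sum_{m=1}^K\alpha^{\ast}_m\phi_m\in\Sh$ is the desired solution.

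No real obstacle arises here: the situation is strictly simpler than in Lemma~\ref{existence-of-semi-disc-trunc-solutions}, as the Vitali compactness argument of Step~2 is not needed. The only delicate point is to keep the constants $C_1,C_2,C_3$ genuinely independent of $\alpha$, which is guaranteed precisely by the boundedness \eqref{f_M_cond_3a} of the truncated nonlinearity $\gamma_M$.
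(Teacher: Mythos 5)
Your proposal is correct and follows essentially the same route as the paper, which simply remarks that the proof is covered by the proof of Lemma~\ref{existence-of-semi-disc-trunc-solutions}: one reuses Step~1 (the Brouwer fixed-point argument via Lemma~\ref{brouwer-lemma}) directly in the finite-dimensional space $\Sh$, with the data term replaced by $\hatuhM{n-1}$ and no need for the Vitali limit passage of Step~2. Your attention to the $\alpha$-independence of the constants via the uniform bound \eqref{f_M_cond_3a} on $\gamma_M$ matches the paper's reasoning.
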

The proof is covered by the proof of Lemma \ref{existence-of-semi-disc-trunc-solutions}. 
Before deriving a $L^2$-error estimate, we need one last auxiliary lemma.
\begin{lemma}
\label{lemma-for-est-fully-disc}
For all $z_0,v_1,v_2 \in \C$ with $|v_\ell|\neq|z_0|$ for $\ell=1,2$, it holds
\begin{align*}
\left| \frac{\Gamma_M(|v_1|^2) - \Gamma_M(|z_0|^2)}{|v_1|^2 - |z_0|^2}
- \frac{\Gamma_M(|v_2|^2) - \Gamma_M(|z_0|^2)}{|v_2|^2 - |z_0|^2}
\right|
\le  \max\{ 4 M \gamma^{M,1} ,  \Gamma_M(M^2)  \}  |v_1 - v_2 |.
\end{align*}
\end{lemma}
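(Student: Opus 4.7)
The plan is to rewrite each divided difference as an average of $\gamma_M$ via the fundamental theorem of calculus, and then control the difference of these averages through a case split on the size of $|v_1|+|v_2|$ relative to $M$. Concretely, for $s\ne|z_0|^2$ one has
$$\Phi(s) := \frac{\Gamma_M(s) - \Gamma_M(|z_0|^2)}{s - |z_0|^2} = \int_0^1 \gamma_M\bigl(|z_0|^2 + \tau(s - |z_0|^2)\bigr)\, d\tau,$$
so the left-hand side of the lemma becomes $|\Phi(|v_1|^2) - \Phi(|v_2|^2)|$. This representation yields simultaneously a uniform bound $|\Phi(s)| \le \|\gamma_M\|_{L^\infty} \le C\gamma^{M,0}$ and, by differentiation under the integral sign, a Lipschitz bound $|\Phi'(s)| \le \tfrac{1}{2}\|\gamma_M'\|_{L^\infty} \le \tfrac{C}{2}\gamma^{M,1}$. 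Together with the elementary identity $\bigl||v_1|^2-|v_2|^2\bigr| \le (|v_1|+|v_2|)\,|v_1-v_2|$, these two bounds form the backbone of the argument.

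In the first case $|v_1|+|v_2| \le 4M$, the Lipschitz estimate gives directly
$$|\Phi(|v_1|^2) - \Phi(|v_2|^2)| \le \tfrac{C}{2}\gamma^{M,1}\,(|v_1|+|v_2|)\,|v_1-v_2| \le 2CM\gamma^{M,1}\,|v_1-v_2|,$$
which after absorbing the numerical constant produces the first alternative $4M\gamma^{M,1}\,|v_1-v_2|$ in the stated maximum. In the complementary regime $|v_1|+|v_2|>4M$, the Lipschitz estimate is no longer sharp and I would switch to the uniform bound $|\Phi(|v_\ell|^2)| \le \|\gamma_M\|_{L^\infty}$, supplemented by the structural information from Lemma~\ref{truncation-lemma} that $\gamma_M$ is constant outside $[0, 2\gamma(M^2)]$. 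This allows the difference $\Phi(|v_1|^2)-\Phi(|v_2|^2)$ to be reduced to an integral of $\gamma_M$ over a region whose cumulative mass is controlled by $\Gamma_M(M^2)$, yielding the second alternative with constant $\Gamma_M(M^2)$.

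The principal obstacle is the subcase where both $|v_\ell|$ exceed $2M$ yet $|v_1-v_2|$ is small, since neither the pure Lipschitz bound (because the factor $|v_1|+|v_2|$ grows) nor the uniform bound divided by $|v_1-v_2|$ (because $|v_1-v_2|$ may be tiny) is individually sufficient. The key observation that resolves this is that for such $v_\ell$, the argument $|z_0|^2 + \tau(|v_\ell|^2-|z_0|^2)$ of $\gamma_M$ leaves the support of $\gamma_M'$ once $\tau$ exceeds some small threshold $\tau_\ell^*$, and on the complementary range $\gamma_M$ is constantly equal to $2\gamma(M^2)$. Splitting each integral defining $\Phi(|v_\ell|^2)$ at $\tau_\ell^*$ produces an explicit cancellation of the two plateau contributions, while the remaining nonconstant part is supported on an interval of length proportional to $\gamma(M^2)$ and contributes an amount controlled by $\Gamma_M(M^2)\,|v_1-v_2|$, closing the estimate.
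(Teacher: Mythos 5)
Your two ingredients are exactly the paper's two ingredients: a Lipschitz bound for the one-variable divided-difference function $s\mapsto\Phi(s)$ used when the arguments are moderate, and the eventual constancy of $\gamma_M$ used when they are large. The integral representation $\Phi(s)=\int_0^1\gamma_M\bigl(|z_0|^2+\tau(s-|z_0|^2)\bigr)\,d\tau$ is a clean equivalent of the paper's direct computation of the derivative of $g_M$ (and in fact gives the sharper constant $\tfrac12\|\gamma_M'\|_{L^\infty}$ where the paper settles for $2\gamma^{M,1}$); likewise your plateau-cancellation computation in the both-large subcase is, if anything, more faithful to the constructed $\gamma_M$ (constant equal to $2\gamma(M^2)$, not zero, at infinity) than the paper's Case~2, which silently replaces $\Gamma_M(|v_\ell|^2)$ by $\Gamma_M(M^2)$.

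There is, however, one uncovered subcase in your organization. You split on $|v_1|+|v_2|\lessgtr 4M$ and, in the regime $|v_1|+|v_2|>4M$, you explicitly set aside the Lipschitz estimate and rely on arguments that reach the plateau of $\gamma_M$; your resolution of the ``principal obstacle'' assumes both $|v_\ell|>2M$. But $|v_1|+|v_2|>4M$ does not force both moduli above $2M$: take $|v_1|=2M+\delta$ and $|v_2|=2M-\delta/2$ with $\delta$ small, so that the sum exceeds $4M$, only $|v_1|^2$ lies in the plateau region, and $|v_1-v_2|$ may be as small as $3\delta/2$. In this configuration the uniform bound divided by $|v_1-v_2|$ is useless, the two-sided plateau cancellation is unavailable, and you have discarded the Lipschitz route. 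The paper avoids this by splitting on the individual moduli --- both $\le 2M$ (Lipschitz), both $\ge 2M$ (plateau), and a mixed Case~3 disposed of by a triangle inequality through the intermediate value $2M$, adding the two one-sided estimates. Restoring that third case (or, equivalently, keeping the Lipschitz bound in play whenever at least one modulus is $\le 2M$, which costs only a harmless factor in the constant) closes the gap and makes your argument coincide with the paper's.
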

\begin{proof}
We assume without loss of generality that $M\ge 1$. For fixed $|z_0|^2$ and $t\ge 0$ we investigate the function
\begin{align*}
g_M(t) := 
\begin{cases}
\frac{\Gamma_M(t) - \Gamma_M(|z_0|^2)}{t - |z_0|^2} & \mbox{for  } t \neq |z_0|^2 \\
\hspace{12pt}\gamma_M(|z_0|^2) & \mbox{for  } t = |z_0|^2.
\end{cases}
\end{align*}
We easily observe that $g_M$ is Lipschitz continuous because we have for some $\theta\in [0,1]$
\begin{align*}
\left| g^{\prime}_M(t) \right| &= \left| \frac{\Gamma_M^{\prime}(t) - \Gamma_M^{\prime}(|z_0|^2)}{t - |z_0|^2}
+ \left( t - |z_0|^2 \right)^{-1} \left( \Gamma_M^{\prime}(|z_0|^2) - \frac{\Gamma_M(t) - \Gamma_M(|z_0|^2)}{t - |z_0|^2} \right) \right| \\
&\le \gamma^{M,1} + \gamma^{M,1} \left|\frac{ |z_0|^2 - (\theta |z_0|^2 + (1-\theta) t) }{ t - |z_0|^2}\right|
= \gamma^{M,1} + (1-\theta) \gamma^{M,1} \le 2 \gamma^{M,1}.
\end{align*}
Now we investigate $|g_M(|v_1|^2) - g_M(|v_2|^2)|$ where we distinguish three cases.\\
Case 1: $| v_1 | \le 2M$ and $|v_2| \le 2M$. We obtain with the Lipschitz continuity of $g_M$
\begin{align*}
|g_M(|v_1|^2) - g_M(|v_2|^2)| \le  2 \gamma^{M,1} \left| |v_1|^2 - |v_2|^2 \right| \le 4 M \gamma^{M,1} \left| |v_1| - |v_2| \right|  \le 4 M \gamma^{M,1} \left| v_1 - v_2 \right| .
\end{align*}
Case 2: $| v_1 | \ge 2M$ and $|v_2| \ge 2M$ (and $|z_0|\le M$, otherwise everything is trivial). Without loss of generality let $| v_1 | \le |v_2|$ We obtain 
\begin{align*}
|g_M(|v_1|^2) - g_M(|v_2|^2)| 
&= 
\left| 
\frac{\Gamma_M(M^2) - \Gamma_M(|z_0|^2)}{|v_1|^2 - |z_0|^2}
- \frac{\Gamma_M(M^2) - \Gamma_M(|z_0|^2)}{|v_2|^2 - |z_0|^2}
\right| \\
&= \left( \Gamma_M(M^2) - \Gamma_M(|z_0|^2) \right)
\left| 
\frac{|v_2|^2 - |v_1|^2 }{(|v_1|^2 - |z_0|^2)(|v_2|^2 - |z_0|^2)}
\right| \\
&= \left( \Gamma_M(M^2) - \Gamma_M(|z_0|^2) \right)
\left| 
\frac{|v_2| + |v_1| }{(|v_1|^2 - |z_0|^2)(|v_2|^2 - |z_0|^2)}
\right| (|v_2| - |v_1|) \\
&\le \frac{\Gamma_M(M^2)}{M^2}
\left| 
\frac{|v_2| + |v_1| }{(|v_1| + |z_0|)(|v_2| + |z_0|)}
\right| (|v_2| - |v_1|) \le \Gamma_M(M^2) |v_1 - v_2|,
\end{align*}
where we used that $|v_2| - |v_1| \le |v_1-v_2|$; $M\ge1$ and that $|v_1|+|v_2| \le |v_1| \hspace{2pt} |v_2|$ for $|v_1|,|v_2|\ge2$.\\
Case 3: $| v_1 | \ge 2M$ and $|v_2| \le 2M$ we can use the results from Case 1 and Case 2 with the intermediate value $2M$ to obtain
\begin{align*}
|g_M(|v_1|^2) - g_M(|v_2|^2)| 
&\le |g_M(|v_1|^2) - g_M(|2M|^2)| + |g_M(|2M|^2) - g_M(|v_2|^2)| \\
&\le 4 M \gamma^{M,1} \left| |v_1| - |2M| \right| + \Gamma_M(M^2) \left| |2M| - |v_2 | \right| \\
&\le \max\{ 4 M \gamma^{M,1} ,  \Gamma_M(M^2)  \} \left( |v_1| - 2M + 2M - |v_2 | \right)\\
&\le \max\{ 4 M \gamma^{M,1} ,  \Gamma_M(M^2)  \}  |v_1 - v_2 |.
\end{align*}
\end{proof}

\begin{lemma}\label{lemma-L2-hatuhm-hatu}
Suppose $\partial_t u \in L^4(\mathcal{D} \times (0,T))$, $\partial_{tt} u \in L^2(\mathcal{D} \times (0,T))$ and $u \in W^{s,2}(0,T;H^2(\mathcal{D}))$ and  for $s=1$ or $s=2$.
Let $\hatuhM{n} \in \Sh$ denote a solution of the fully-discrete Crank-Nicolson Method with truncation as stated in Definition \ref{truncated-fully-discrete-crank-nic-gpe} and let $\deltat{}$ be small enough for the results of Theorem \ref{main-theorem-1} to hold. If $\hatu^{n}\in H^1_0(\mathcal{D})$ denotes the unique solution to \eqref{semi-disc-cnd-problem} with the properties stated in Theorem \ref{main-theorem-1}, then 
\begin{align*}
\| \hatuhM{n} - \PR(\hatu^{n}) \|_{L^2(\mathcal{D})} \le C(u,\gamma,V,\mathcal{D},M,\PR) h^2
\deltat{}^{s-2}
\end{align*}
holds with an $h$-independent constant $C(u,\gamma,\mathcal{D},M,\PR)$.
\end{lemma}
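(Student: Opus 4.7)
My plan is to introduce the standard elliptic-projection splitting $\hatuhM{n} - \hatu^{n} = \theta^n + \rho^n$ with $\theta^n := \hatuhM{n} - \PR(\hatu^n) \in \Sh$ and $\rho^n := \PR(\hatu^n) - \hatu^n$, and to prove the claimed bound directly for $\theta^n$. The quantity $\rho^n$ enters only as perturbation data; by \eqref{L2-estimate-PR} and the uniform $H^2$-bound of Theorem \ref{main-theorem-1} it satisfies $\|\rho^n\|_{L^2(\mathcal{D})} \lesssim h^2$, which will be repeatedly invoked.

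To derive an equation for $\theta^n$, I subtract the truncated semi-discrete scheme \eqref{semi-disc-cnd-problem-truncation} (restricted to $v_h \in \Sh$) from the truncated fully-discrete scheme \eqref{fully-disc-cnd-problem-truncation}. The contribution $\Ltwo{\nabla \rho^{n-\frac{1}{2}}}{\nabla v_h}$ vanishes by the defining property \eqref{definition-ritz-projection} of $\PR$, leaving a perturbed fully-discrete Crank--Nicolson equation in $\Sh$ whose source consists of the discrete Ritz-error increment $\rho^n - \rho^{n-1}$, the potential consistency term $\deltat{n} V \rho^{n-\frac{1}{2}}$, and the nonlinearity mismatch $\mathcal{N}^n := \GM(|\hatuhM{n}|^2,|\hatuhM{n-1}|^2)\hatuhM{n-\frac{1}{2}} - \GM(|\hatu^{n}|^2,|\hatu^{n-1}|^2)\hatu^{n-\frac{1}{2}}$.

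Following the stability strategy of Lemma \ref{lem:stab}, I test with $v_h = \theta^n + \theta^{n-1}$ and take real parts. The Hermitian contribution $\ci\deltat{n}(\theta^{n-\frac{1}{2}},\cdot)_{H^1_V}$ drops out, leaving $\|\theta^n\|_{L^2}^2 - \|\theta^{n-1}\|_{L^2}^2$ on the left-hand side. To estimate $\|\mathcal{N}^n\|_{L^2}$, I apply the pointwise inequality \eqref{f_M_cond_3} of Lemma \ref{truncation-lemma} with $v_1=\hatu^n$, $w_1=\hatu^{n-1}$, $v_2=\hatuhM{n}$, $w_2=\hatuhM{n-1}$; the hypotheses $|v_1|,|w_1|\le M$ are guaranteed by Theorem \ref{main-theorem-1}. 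This gives
\begin{equation*}
\|\mathcal{N}^n\|_{L^2} \lesssim \|\hatu^n - \hatu^{n-1}\|_{L^4}^2 + \|\theta^n\|_{L^2} + \|\theta^{n-1}\|_{L^2} + \|\rho^n\|_{L^2} + \|\rho^{n-1}\|_{L^2}.
\end{equation*}
After Cauchy--Schwarz and Young's inequality, the contributions of the form $\deltat{n}(\|\theta^n\|^2 + \|\theta^{n-1}\|^2)$ can be handled by a discrete Gronwall iteration exactly as in Lemma \ref{lem:stab}, yielding
\begin{equation*}
\|\theta^N\|_{L^2}^2 \lesssim \sum_{n=1}^N \frac{1}{\deltat{n}}\|\rho^n - \rho^{n-1}\|_{L^2}^2 + \sum_{n=1}^N \deltat{n}\bigl(h^4 + \|\hatu^n - \hatu^{n-1}\|_{L^4}^4\bigr).
\end{equation*}

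The main obstacle, and the sole origin of the factor $\deltat{}^{s-2}$, is the bound on $\sum_n \deltat{n}^{-1}\|\rho^n - \rho^{n-1}\|_{L^2}^2$. By linearity of $\PR$ and \eqref{L2-estimate-PR}, $\|\rho^n - \rho^{n-1}\|_{L^2} \le C_{\PR} h^2 |\hatu^n - \hatu^{n-1}|_{H^2}$, so the task reduces to controlling $|\hatu^n - \hatu^{n-1}|_{H^2}$. For $s=1$ only the uniform bound $|\hatu^n|_{H^2} \lesssim 1$ from Theorem \ref{main-theorem-1} is available, so by quasi-uniformity $\sum_n \deltat{n}^{-1}\|\rho^n-\rho^{n-1}\|_{L^2}^2 \lesssim N h^4/\deltat{} \lesssim h^4/\deltat{}^2$, hence $\|\theta^N\|_{L^2} \lesssim h^2/\deltat{}$. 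For $s=2$, the semi-discrete $H^2$-error estimate $\|\hatu^n - u^n\|_{H^2} \lesssim \deltat{}$ from Theorem \ref{main-theorem-1} combined with $u^n - u^{n-1} = \int_{I_n}\partial_t u$ and the (R3)-regularity $\partial_{tt}u\in L^2(H^2)$ permits the decomposition $\|\hatu^n - \hatu^{n-1}\|_{H^2}^2 \lesssim \deltat{}^2 + \deltat{n}\|\partial_t u\|_{L^2(I_n;H^2)}^2$, reducing the sum to $\lesssim h^4\bigl(1 + \|\partial_t u\|_{L^2(0,T;H^2)}^2\bigr)$ and hence $\|\theta^N\|_{L^2} \lesssim h^2$. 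The remaining contributions $\sum_n \deltat{n} h^4$ and $\sum_n \deltat{n}\|\hatu^n - \hatu^{n-1}\|_{L^4}^4$ are of strictly higher order via Sobolev embedding $H^1\hookrightarrow L^4$ and the uniform $H^2$-bound on $\hatu^n$. Combining the two regimes yields the unified estimate $h^2\deltat{}^{s-2}$.
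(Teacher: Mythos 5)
Your overall architecture coincides with the paper's: the same splitting $\theta^n=\hatuhM{n}-\PR(\hatu^n)$, $\rho^n=\PR(\hatu^n)-\hatu^n$, the same subtraction of the two truncated schemes with the gradient term killed by \eqref{definition-ritz-projection}, testing with $\theta^n+\theta^{n-1}$ and taking real parts, and the same treatment of the Ritz-increment term (whose $\deltat{n}^{-1}$-weighted sum is indeed the sole source of the factor $\deltat{}^{s-2}$, handled exactly as you describe via Theorem \ref{main-theorem-1} and $\partial_t u\in L^2(0,T;H^2)$). The potential consistency term and the discrete Gronwall step are also as in the paper.

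There is, however, a genuine gap in your treatment of the nonlinearity mismatch $\mathcal{N}^n$. By invoking \eqref{f_M_cond_3} with $v_1=\hatu^n$, $w_1=\hatu^{n-1}$ you pick up the quadratic temporal increment $|v_1-w_1|^2=|\hatu^n-\hatu^{n-1}|^2$, i.e.\ the contribution $\sum_n\deltat{n}\|\hatu^n-\hatu^{n-1}\|_{L^4}^4$ on the right-hand side. This term carries \emph{no} power of $h$, so it can never be absorbed into $C\,h^4\deltat{}^{2s-4}$: with only the uniform $H^2$-bound and $H^1\hookrightarrow L^4$, as you propose, it is merely $O(1)$; even with the sharper interpolation $\|\hatu^n-u^n\|_{L^4}\le\|\hatu^n-u^n\|_{L^2}^{1/2}\|\hatu^n-u^n\|_{L^\infty}^{1/2}\lesssim\deltat{}^{s-1/2}$ and $\|u^n-u^{n-1}\|_{L^4}^4\le\deltat{n}^3\|\partial_t u\|_{L^4(\mathcal{D}\times I_n)}^4$ it only improves to $O(\deltat{}^{4s-2})$, which still does not vanish as $h\to0$ with $\deltat{}$ fixed. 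In particular the resulting bound $\|\theta^N\|\lesssim h^2\deltat{}^{s-2}+\deltat{}^{2s-1}$ is strictly weaker than the claim and would destroy the subsequent $L^\infty$-argument $\|\theta^n\|_{L^\infty}\lesssim h^{-d/2}\|\theta^n\|_{L^2}$ in the regime $h^{4-d-\alpha}\lesssim\deltat{}^2$ of Theorem \ref{main-theorem-2-b}. The estimate \eqref{f_M_cond_3} is the right tool for the consistency error (semi-discrete versus exact, Lemma \ref{lemma-bound-laplacian-hateM}), but not here. The paper instead estimates $|\GM(|\hatu^{n}|^2,|\hatu^{n-1}|^2)-\GM(|\hatuhM{n}|^2,|\hatuhM{n-1}|^2)|$ by changing one argument at a time and using the global Lipschitz continuity of the difference quotient $t\mapsto\GM(t,|z_0|^2)$ established in Lemma \ref{lemma-for-est-fully-disc}; this yields the purely first-order bound $C_M(|\hatu^n-\hatuhM{n}|+|\hatu^{n-1}-\hatuhM{n-1}|)\le C_M(|\theta^n|+|\theta^{n-1}|+|\rho^n|+|\rho^{n-1}|)$ with no quadratic increment, after which everything is proportional to $\|\theta\|_{L^2}+O(h^2)$ and the Gronwall argument closes as you intend. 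Replacing your use of \eqref{f_M_cond_3} by Lemma \ref{lemma-for-est-fully-disc} repairs the proof; the rest of your argument stands.
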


\begin{proof}
First, observe that the assumptions imply $\hatu^{n}=\hatuM{n}$ (and that it is unique). In the following, we denote by $C_M$ any generic constant that depends on $M$, $u$, $\gamma$, $V$ and $\mathcal{D}$. Recall the definition of the continuous function $\GM$ from \eqref{def-GM} and let again $(v,w)_{H^1_V}:=\Ltwo{\nabla v }{\nabla w} +\Ltwo{V \hspace{2pt} v }{ w}$. Consider $v\in\Sh$. From \eqref{fully-disc-cnd-problem-truncation} we have 
\begin{eqnarray*}
\lefteqn{\Ltwo{ \hatuhM{n} - \hatuhM{n-1}}{v} +
\frac{\deltat{n}}{2} \hspace{2pt}\ci \hspace{2pt}( \hatuhM{n} + \hatuhM{n-1} , v )_{H^1_V}}
\\
 &=& - \deltat{n} \hspace{2pt}\ci \hspace{2pt} \frac{1}{2} \Ltwo{ \GM(|\hatuhM{n}|^2,|\hatuhM{n-1}|^2) (\hatuhM{n} + \hatuhM{n-1}) }{v}.
\end{eqnarray*}
and from \eqref{semi-disc-cnd-problem}
\begin{eqnarray*}
\lefteqn{\Ltwo{ \PR(\hatu^{n}) - \PR(\hatu^{n-1}) }{v} +
\frac{\deltat{n}}{2} \hspace{2pt}\ci \hspace{2pt}
( \PR(\hatu^{n}+ \hatu^{n-1}) ,  v )_{H^1_V} }
\\
 &=& 
- \frac{\deltat{n}}{2} \hspace{2pt}\ci \hspace{2pt}
\Ltwo{ V \left( (\hatu^{n}+ \hatu^{n-1}) - \PR (\hatu^{n}+ \hatu^{n-1}) \right)}{ v}
\\
&\enspace& \quad + 
 \Ltwo{ \PR(\hatu^{n}) - \hatu^{n} - \PR(\hatu^{n-1}) + \hatu^{n-1}}{v}
 - \deltat{n} \hspace{2pt}\ci \hspace{2pt} \frac{1}{2} \Ltwo{ \GM(|\hatu^{n}|^2,|\hatu^{n-1}|^2) (\hatu^{n} + \hatu^{n-1}) }{v}.
\end{eqnarray*}
Subtracting the terms from each other and defining $\hatehM{n}:=\hatuhM{n} - \PR(\hatu^{n})$ gives us
\begin{eqnarray}
\nonumber\lefteqn{\Ltwo{ \hatehM{n} - \hatehM{n-1}}{v } +
 {\frac{\deltat{n}}{2}} \ci \hspace{2pt} 
 (\hatehM{n} + \hatehM{n-1} , v)_{H^1_V}}
 \\
\nonumber &=& 
\frac{\deltat{n}}{2} \hspace{2pt}\ci \hspace{2pt}
\Ltwo{ V \left( (\hatu^{n}+ \hatu^{n-1}) - \PR (\hatu^{n}+ \hatu^{n-1}) \right)}{ v}
- \Ltwo{ \PR(\hatu^{n}) - \hatu^{n} + \hatu^{n-1} - \PR(\hatu^{n-1}) }{v} \\
\nonumber&\enspace& \quad + \deltat{n} \hspace{2pt}\ci \hspace{2pt} \frac{1}{2} \Ltwo{ \GM(|\hatu^{n}|^2,|\hatu^{n-1}|^2) (\hatu^{n} + \hatu^{n-1}) }{v}\\
\nonumber&\enspace& \quad - \deltat{n} \hspace{2pt}\ci \hspace{2pt} \frac{1}{2} \Ltwo{ \GM(|\hatuhM{n}|^2,|\hatuhM{n-1}|^2) (\hatuhM{n} + \hatuhM{n-1}) }{v} \\
\nonumber &=& 
\frac{\deltat{n}}{2} \hspace{2pt}\ci \hspace{2pt}
\Ltwo{ V \left( (\hatu^{n}+ \hatu^{n-1}) - \PR (\hatu^{n}+ \hatu^{n-1}) \right)}{ v}
- \Ltwo{ \PR(\hatu^{n}) - \hatu^{n} + \hatu^{n-1} - \PR(\hatu^{n-1}) }{v} \\
\nonumber&\enspace& \quad + \deltat{n} \hspace{2pt}\ci \hspace{2pt} \frac{1}{2} \Ltwo{ \GM(|\hatu^{n}|^2,|\hatu^{n-1}|^2) (\hatu^{n} + \hatu^{n-1}) }{v}\\
\nonumber&\enspace& \quad - \deltat{n} \hspace{2pt}\ci \hspace{2pt} \frac{1}{2} \Ltwo{ \GM(|\hatuhM{n}|^2,|\hatuhM{n-1}|^2) (\hatehM{n} + \hatehM{n-1}) }{v} \\
\nonumber&\enspace& \quad - \deltat{n} \hspace{2pt}\ci \hspace{2pt} \frac{1}{2} \Ltwo{ \GM(|\hatuhM{n}|^2,|\hatuhM{n-1}|^2) ( \PR(\hatu^{n}) - \hatu^{n} + \PR(\hatu^{n-1}) - \hatu^{n-1}) }{v} \\
\nonumber&\enspace& \quad - \deltat{n} \hspace{2pt}\ci \hspace{2pt} \frac{1}{2} \Ltwo{ \GM(|\hatuhM{n}|^2,|\hatuhM{n-1}|^2) (  \hatu^{n} + \hatu^{n-1}) ) }{v}.
\end{eqnarray}
Testing with $v= \hatehM{n} + \hatehM{n-1} $ and taking the real part yields
\begin{eqnarray}
\nonumber\lefteqn{ \| \hatehM{n} \|_{L^2(\mathcal{D})}^2 - \| \hatehM{n-1} \|_{L^2(\mathcal{D})}^2 }\\
\nonumber &=& 
\underset{=:\mbox{I}}{\underbrace{- \frac{\deltat{n}}{2} \hspace{2pt}
\Im \Ltwo{ V \left( (\hatu^{n}+ \hatu^{n-1}) - \PR (\hatu^{n}+ \hatu^{n-1}) \right)}{  \hatehM{n} + \hatehM{n-1} }}}
\\
\nonumber&\enspace& \quad 
\underset{=:\mbox{II}}{\underbrace{-\Re \Ltwo{ \PR(\hatu^{n}) - \hatu^{n} - ( \PR(\hatu^{n-1}) - \hatu^{n-1} )}{  \hatehM{n} + \hatehM{n-1} }}} \\
\nonumber&\enspace& \quad + \underset{=:\mbox{III}}{\underbrace{ \frac{\deltat{n}}{2} \Im \Ltwo{ \GM(|\hatuhM{n}|^2,|\hatuhM{n-1}|^2) ( \PR(\hatu^{n}) - \hatu^{n} + \PR(\hatu^{n-1}) - \hatu^{n-1}) }{ \hatehM{n} + \hatehM{n-1} }}} \\
\nonumber&\enspace& \quad - \underset{=:\mbox{IV}}{\underbrace{\frac{\deltat{n}}{2} \Im \Ltwo{ \left(\GM(|\hatu^{n}|^2,|\hatu^{n-1}|^2) - \GM(|\hatuhM{n}|^2,|\hatuhM{n-1}|^2) \right) (  \hatu^{n} + \hatu^{n-1}) ) }{ \hatehM{n} + \hatehM{n-1} }}}.
\end{eqnarray}
For the first term we have with Theorem \ref{main-theorem-1}
\begin{align*}
\mbox{I} &\le \frac{\deltat{n}}{2} \| V \|_{L^{\infty}(\mathcal{D})} C_{\PR}  | h^{2} (\hatu^{n} + \hatu^{n-1}) |_{H^2(\mathcal{D})} \| \hatehM{n}+ \hatehM{n-1} \|_{L^2(\mathcal{D})}\\
&\le C_M \deltat{n} h^4 + \deltat{n} \left( \| \hatehM{n} \|_{L^2(\mathcal{D})}^2 + \| \hatehM{n-1} \|_{L^2(\mathcal{D})}^2 \right).
\end{align*}
The second term can be estimates as
\begin{align*}
\mbox{II} &\le C_{\PR}  | h^{2} (\hatu^{n} - \hatu^{n-1}) |_{H^2(\mathcal{D})} \| \hatehM{n}+ \hatehM{n-1} \|_{L^2(\mathcal{D})}\\
&\le C_M h^{2} \left( | \hatu^{n} - u^n |_{H^2(\mathcal{D})} +  | \hatu^{n-1} - u^{n-1} |_{H^2(\mathcal{D})}  
+  | \int_{I_n} \partial_t u |_{H^2(\mathcal{D})} \right) \| \hatehM{n}+ \hatehM{n-1} \|_{L^2(\mathcal{D})} \\
&\le C_M h^{2} \left( \deltat{}^{{s-1}} + \sqrt{\deltat{}} | \partial_t u |_{L^2(I_n,H^2(\mathcal{D}))} \right) \| \hatehM{n}+ \hatehM{n-1} \|_{L^2(\mathcal{D})}\\
&\le C_M h^{4} \left( \deltat{}^{2s-4} \deltat{} + | \partial_t u |_{L^2(I_n,H^2(\mathcal{D}))}^2 \right)
+ C_M \deltat{} \left( \| \hatehM{n} \|_{L^2(\mathcal{D})}^2 + \| \hatehM{n-1} \|_{L^2(\mathcal{D})}^2 \right),
\end{align*}
where we used again Theorem \ref{main-theorem-1}. 
For the third term we can proceed analogously since $G_M$ is bounded. We obtain straightforwardly (again with Theorem \ref{main-theorem-1}) that
\begin{align*}
\mbox{III} &\le C_M \deltat{} h^{2} \left( | \hatu^{n}  |_{H^2(\mathcal{D})} + | \hatu^{n-1} |_{H^2(\mathcal{D})} \right) \| \hatehM{n}+ \hatehM{n-1} \|_{L^2(\mathcal{D})}\\
&\le C_M \deltat{} h^{4}+ C_M \deltat{} \left( \| \hatehM{n} \|_{L^2(\mathcal{D})}^2 + \| \hatehM{n-1} \|_{L^2(\mathcal{D})}^2 \right).
\end{align*}
To bound term IV, we use Lemma \ref{truncated-fully-discrete-crank-nic-gpe} to estimate
\begin{eqnarray*}
\lefteqn{\left| \GM(|\hatu^{n}|^2,|\hatu^{n-1}|^2) - \GM(|\hatuhM{n}|^2,|\hatuhM{n-1}|^2) \right|} \\
&\le& \left| \GM(|\hatu^{n}|^2,|\hatu^{n-1}|^2) - \GM(|\hatu^{n}|^2,|\hatuhM{n-1}|^2) \right|
+ \left| \GM(|\hatu^{n}|^2,|\hatuhM{n-1}|^2) - \GM(|\hatuhM{n}|^2,|\hatuhM{n-1}|^2) \right| \\
&\le& C_M \left| \hatu^{n-1} - \hatuhM{n-1} \right| + C_M \left| \hatu^{n} - \hatuhM{n} \right|\\
&\le& C_M \left( |\hatehM{n}| + |\hatehM{n-1}| +  \left| \hatu^{n-1} - \PR( \hatu^{n-1}) \right| + \left| \hatu^{n} - \PR(\hatu^{n}) \right| \right).
\end{eqnarray*}
Consequently, using that $\| \hatu^{n} \|_{L^{\infty}(\mathcal{D})}$ is uniformly bounded (Theorem \ref{main-theorem-1}) we can conclude
\begin{align*}
\mbox{IV} &\le C_M \deltat{} h^{4}+ C_M \deltat{} \left( \| \hatehM{n} \|_{L^2(\mathcal{D})}^2 + \| \hatehM{n-1} \|_{L^2(\mathcal{D})}^2 \right).
\end{align*}
Collecting the estimates for I, II, III and IV implies that 
\begin{eqnarray*}
(1 - C_M \deltat{n}) \| \hatehM{n} \|_{L^2(\mathcal{D})}^2 \le 
(1 + C_M \deltat{n}) \| \hatehM{n-1} \|_{L^2(\mathcal{D})}^2 + C_M \deltat{} h^{4} \deltat{}^{2s-4} + h^{4} | \partial_t u |_{L^2(I_n,H^2(\mathcal{D}))}^2.
\end{eqnarray*}
Using the inequality $a_{n+1} \le e^{\sum_{\ell=0}^n \alpha_\ell} \left( a_0 + \sum_{\ell=0}^n b_\ell \right)$ which holds for any $0\le a_n,b_n,\alpha_n$ with $a_{n+1} \le (1 + \alpha_n) a_n + b_n$ finishes the $L^2$-error estimate.
\end{proof}

We can now conclude from Lemma \ref{lemma-L2-hatuhm-hatu} that $\hatuhM{n}$ remains uniformly bounded in $L^{\infty}$ which allows us to conclude $\hatuhM{n}=\hatuh^{n}$ for appropriately chosen $M$. In summary we obtain Theorem \ref{main-theorem-2-a}. The detailed proof is given in the following.

\begin{proof}[Proof of Theorem \ref{main-theorem-2-a} and \ref{main-theorem-2-b}]
We choose $M:=1+ 2 \| u \|_{L^{\infty}(\mathcal{D} \times (0,T))} 
+ \sup_{0\le n \le N} \| u^n \|_{H^{2}(\mathcal{D})}$.
Let $C=C(u,\gamma,V,\mathcal{D},\PR)$ and let $C_M$ denote any constant depending additionally on $M$ (however, both are not allowed to depend on $h$ or $\deltat{}$). Using the assumptions on $\PR$, the bounds from Theorem \ref{main-theorem-1} and Lemma \ref{lemma-L2-hatuhm-hatu} we have
\begin{eqnarray*}
\|\hatuhM{n} \|_{L^{\infty}(\mathcal{D})}
&\le& 
\|\PR(\hatu^{n}) \|_{L^{\infty}(\mathcal{D})}
+ \| \hatuhM{n} - \PR(\hatu^{n}) \|_{L^{\infty}(\mathcal{D})}\\
&\overset{\eqref{inverse-estimate-PR},\eqref{Linfty-stability-PR}}{\le}& \| \hatu^{n}  \|_{H^{2}(\mathcal{D})} + C_{\mbox{\rm\tiny inv}} h^{2-d/2} C_M \\
&\le& 2 \| u \|_{L^{\infty}(\mathcal{D} \times (0,T))} 
+ \sup_{0\le n \le N} \| u^n \|_{H^{2}(\mathcal{D})} + h^{2-d/2}\deltat{}^{s-2} C_M. 
\end{eqnarray*}
Since $d=2,3$ and $h^{4-d-\alpha} \lesssim \deltat{}^2$ for $s=1$ and some $\alpha>0$,
we conclude that there exists $\hat{h}>0$ such that
\begin{align*}
\| \hatuhM{n} \|_{L^{\infty}(\mathcal{D})} \le M
\end{align*}
for all $h < \hat{h}$. Hence, for sufficiently small $h$ we have $\hatuhM{n}=\hatuh^{n}$. We conclude the existence of $\hatuh^{n}$ and the $h$-independent bound
\begin{align*}
\| \hatuh^{n} \|_{L^{\infty}(\mathcal{D})} \le 1+ 2 \| u \|_{L^{\infty}(\mathcal{D} \times (0,T))} 
+ \sup_{0\le n \le N} \| u^n \|_{H^{2}(\mathcal{D})}.
\end{align*}
For the $L^2$-error estimate we split the error into
\begin{align*}
\| u^n - \hatuh^{n} \|_{L^{2}(\mathcal{D})} \le 
\| u^n - \hatu^{n} \|_{L^{2}(\mathcal{D})}
+ \| \hatu^{n} - \PR(\hatu^{n}) \|_{L^{2}(\mathcal{D})}
+ \| \PR(\hatu^{n}) - \hatuh^{n} \|_{L^{2}(\mathcal{D})}.
\end{align*}
The first term can be estimated with Theorem \ref{main-theorem-1} for sufficiently small $\deltat{}$, the second term is bounded by $\| \hatu^{n} - \PR(\hatu^{n}) \|_{L^{2}(\mathcal{D})} \le C h^2 \|  \hatu^{n}  \|_{H^{2}(\mathcal{D})} \le C h^2$ (again using Theorem \ref{main-theorem-1}) and the last term can be estimated with Lemma \ref{lemma-L2-hatuhm-hatu} which now holds with $\hatuhM{n}=\hatuh^{n}$. In the setting of Theorem \ref{main-theorem-2-a}, this yields
\begin{align*}
\| u^n - \hatuh^{n} \|_{L^{2}(\mathcal{D})} \le C(h^2 + \deltat{}^2)
\end{align*}
for all sufficiently small $h$ and $\deltat{}$. In the setting of Theorem \ref{main-theorem-2-b}, the order is reduced to $h^{(d+\alpha)/2} + \deltat{}$.
The proof of uniqueness under some uniform bound $\|\hatuhM{n} \|_{L^{\infty}(\mathcal{D})}\le C$ independent of $\deltat{}$ and $h$ is almost verbatim the same as in the semi-discrete case (see the proof of Theorem \ref{main-theorem-1}).
\end{proof}

\section{Numerical experiment}\label{sec:numexp}
We shall conclude with some simple and illustrative numerical experiment. The computational domain is given by $\mathcal{D}:=[-6,6]^2$ and the time interval by $[0,T]:=[0,1]$. We wish to approximate $u: \mathcal{D} \times [0,T] \rightarrow \C$ with $u(x,0)=u^0$ and
\begin{align*}
\ci \partial_t u &= - \frac{1}{2} \triangle u + Vu + \beta |u|^2 u  \qquad \mbox{in } \mathcal{D} \times [0,T] , \\
u &= 0 \hspace{121pt} \mbox{on } \partial \mathcal{D} \times [0,T].
\end{align*}
Here we have $\beta=20$ and 
and we consider a rough potential $V\in L^{\infty}(\mathcal{D})$ given by
\begin{align*}
V(x) = \mbox{\rm int} \left( 5  + 2 \sin( \frac{\pi x_1}{3} ) \sin( \frac{\pi x_2}{3} ) \right),
\end{align*}
where \quotes{$\mbox{\rm int}$} rounds a real number $r$ to the largest integer smaller or equal to $r$. The potential is visualized in Figure \ref{potential}. 
We note that the potential $V$ is not a confinement potential as it does not fulfill $V(x)\rightarrow \infty$ for $|x|\rightarrow \infty$. For that reason, the physically correct solution will escape $\mathcal{D}$ for sufficiently large times $t$. In our experiment we picked the maximum time $T=1$ small enough so that this does not happen.
\begin{figure}[h!]
\centering
\includegraphics[scale=0.35]{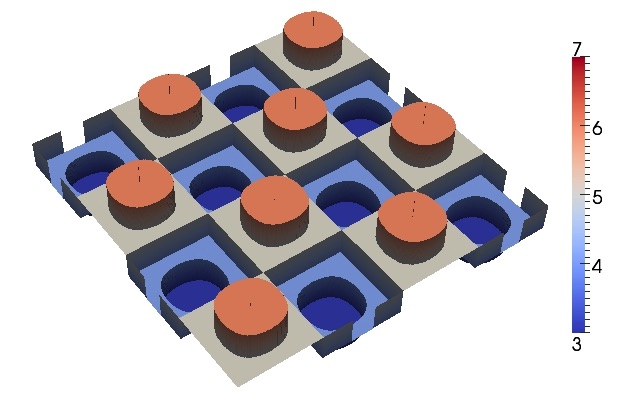}
\caption{\it Discontinuous potential $V(x)$ as considered in the model problem.}
\label{potential}
\end{figure}
Inspired by the discussion in Appendix \ref{appendix-b} we select the initial value as the ground state of a perturbed NLS. More precisely, we choose $u^0 \in H^1_0(\mathcal{D})$ with $u^0\ge 0$ such that 
\begin{align*}
E(u^0) = \underset{\| v \|_{L^2(\mathcal{D})}=1}{\min_{v \in H^1_0(\mathcal{D})}} E(v), \qquad \mbox{where  } \quad
E(v):= \frac{1}{2} \int_{\mathcal{D}} |\nabla v|^2 + \int_{\mathcal{D}} (V + V_{s}) \hspace{2pt} |v|^2 + \frac{\beta_s}{2} \int_{\mathcal{D}}  |v|^4,
\end{align*}
with $\beta_s=10$ and a smooth potential perturbation $V_s(x):=\frac{1}{2}(x_1^2+ x_2^2)$. There exists a unique ground state $u^0$ with the above properties (cf. \cite{CCM10}) and it holds $u^0 \in H^2(\mathcal{D})$. Given a finite element space $S_h$, the discrete approximation of $u^0$ in $S_h$ is given by some $u_{h}^0\in S_h$ with $\| u_{h}^0 \|_{L^2(\mathcal{D})}=1$ and
\begin{align}
\label{disc-initial-value}
E(u_{h}^0) = \underset{\| v \|_{L^2(\mathcal{D})}=1}{\min_{v \in S_h}} E(v),
\end{align}
i.e., $u_{h}^0$ is an energy minimizer in $S_h$. Such a minimizer exists and it holds $\| u^0 - u_{h}^0 \|_{L^2(\mathcal{D})} \le C(u_0) h^2$ (independent of the smoothness of $V$). This means that using $u_{h}^0$ as a discrete initial value in our scheme \eqref{crank-nic-gpe} will introduce an error that is of the same order as if using $\PR(u^0)$. We compute the discrete minimizers $u_{h}^0$ by using the Discrete Normalized Gradient Flow method proposed in \cite{BaD04}. For $S_{h}$ we use a Lagrange finite element space of polynomial order $1$, based on a uniform (simplicial) triangulation of $\mathcal{D}$. The mesh size $h$ is given as the diameter of the elements of the triangulation. For $h=2^{-6} \sqrt{2} \hspace{2pt} \mbox{\rm diam}(\mathcal{D})$ the discrete ground state $u_{h,0}$ is depicted in Figure \ref{reference-values}. In the following, all errors are with respect to a reference solution $u_{\mbox{\rm\tiny ref}}^{n}:=u_{h,\deltat{}}^{n} \in \Sh$ computed with the Crank-Nicolson scheme \eqref{cnd-problem} with $h=2^{-6} \sqrt{2} \hspace{2pt} \mbox{\rm diam}(\mathcal{D})$ and with equidistant time steps of size $\deltat{}=10^{-2}$. The reference solution $u_{\mbox{\rm\tiny ref}}^{n}$ at $T=1$ is depicted in the right picture of Figure \ref{reference-values}.
\begin{figure}[h!]
\centering
\includegraphics[scale=0.35]{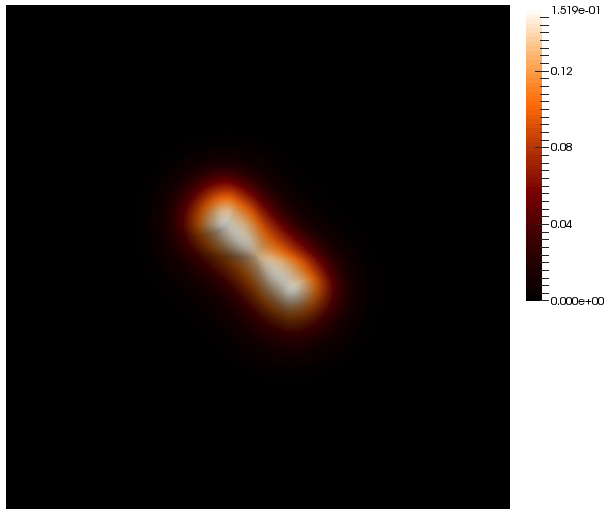}
\includegraphics[scale=0.35]{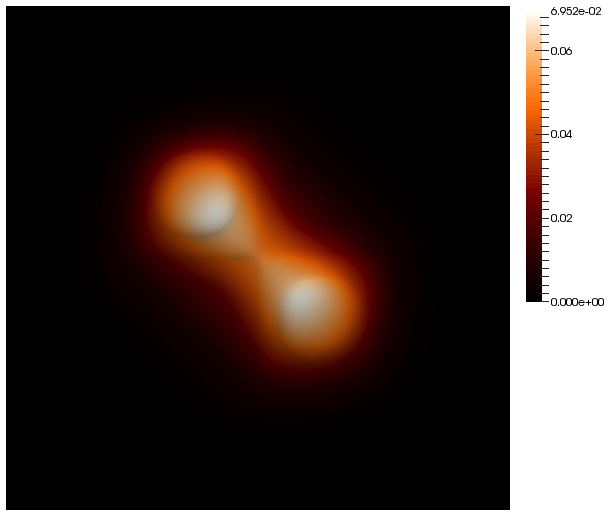}
\caption{\it Left: reference initial value $u_{h}^0$ obtained for $h^{\mbox{\tiny rel}}=2^{-6}$. Right: reference solution $u_{\mbox{\rm\tiny ref}}^{n}$ at $T=1$ obtained with \eqref{cnd-problem} for $h^{\mbox{\tiny rel}}=2^{-6}$ and $\deltat{}=10^{-2}$.}
\label{reference-values}
\end{figure}
In the following with present the discrete approximations $u_{h,\deltat{}}^n$ obtained with \eqref{cnd-problem} in $\Sh$ and with equidistant time steps. Recall that the discrete initial value is given by \eqref{disc-initial-value}, i.e. $u_{h,\deltat{}}^0=u_{h}^0$.
 
Before discussing the error evolution, we introduce some short-hand notation. The rescaled mesh and time step sizes are given by
\begin{align*}
h^{\mbox{\tiny rel}} := (\sqrt{2} \hspace{2pt} \mbox{\rm diam}(\mathcal{D}))^{-1} h
\qquad \mbox{and} \qquad
\deltat{}^{\mbox{\tiny rel}}:=3/2 \deltat{}.
\end{align*}
The error compared to the reference solution a time $t^n$ is denoted by
$$
e^{n}_{h,\deltat{}} := u_{\mbox{\rm\tiny ref}}^{n} - u_{h,\deltat{}}^n
$$
and the corresponding relative errors (for real and imaginary parts) are given by
\begin{align*}
\| \Re \hspace{2pt} e^{N}_{h,\deltat{}} \|_{L^2(\mathcal{D})}^{\mbox{\tiny rel}} := \frac{\| \Re \hspace{2pt} e^{N}_{h,\deltat{}} \|_{L^2(\mathcal{D})}}{\| \Re \hspace{2pt} u_{\mbox{\rm\tiny ref}}^{n} \|_{L^2(\mathcal{D})}} \qquad \mbox{and} \qquad
\| \Im \hspace{2pt} e^{N}_{h,\deltat{}} \|_{L^2(\mathcal{D})}^{\mbox{\tiny rel}} := \frac{\| \Im \hspace{2pt} e^{N}_{h,\deltat{}} \|_{L^2(\mathcal{D})}}{\| \Im \hspace{2pt} u_{\mbox{\rm\tiny ref}}^{n} \|_{L^2(\mathcal{D})}}
\end{align*}
and analogously for the error in the gradient. The EOCs in Tables \ref{table-eocs-coupled}, \ref{table-lerrors-new-1} and \ref{table-lerrors-new-2} refer to the averages of the (E)xperimental (O)rders of (C)onvergence.

\begin{table}[h!]
\caption{\it The table shows errors between the reference solution $u_{\mbox{\rm\tiny ref}}^{n}$ and various Crank-Nicolson approximations $u_{h,\deltat{}}^n$ for simultaneously refined spatial and temporal meshes.}
\label{table-eocs-coupled}
\begin{center}
\begin{tabular}{|c|c|c|c|c|c|c|c|}
\hline $h^{\mbox{\tiny rel}}$      & $\deltat{}^{\mbox{\tiny rel}}$
& $\| \Re \hspace{2pt} e^{N}_{h,\deltat{}} \|_{L^2(\mathcal{D})}^{\mbox{\tiny rel}}$
& $\| \Im \hspace{2pt} e^{N}_{h,\deltat{}} \|_{L^2(\mathcal{D})}^{\mbox{\tiny rel}}$
& $\| \Re \hspace{2pt} \nabla e^{N}_{h,\deltat{}} \|_{L^2(\mathcal{D})}^{\mbox{\tiny rel}}$
& $\| \Im \hspace{2pt} \nabla e^{N}_{h,\deltat{}} \|_{L^2(\mathcal{D})}^{\mbox{\tiny rel}}$ \\
\hline
\hline $2^{-2}$ & $2^{-2} $  & 0.7157 & 0.7603 & 0.9929 & 0.8506 \\
\hline $2^{-3}$ & $2^{-3} $  & 0.1753 & 0.2370 & 0.4045 & 0.4379 \\
\hline $2^{-4}$ & $2^{-4} $  & 0.0236 & 0.0338 & 0.0881 & 0.0935 \\
\hline $2^{-5}$ & $2^{-5} $  & 0.0050 & 0.0069 & 0.0205 & 0.0217 \\
\hline \multicolumn{2}{|c|}{EOC} & 2.38 & 2.26 & 1.86 & 1.76 \\
\hline
\end{tabular}\end{center}
\end{table}

\begin{table}[h!]
\caption{\it The table shows $L^{\infty}(L^2)$- and $L^{\infty}(H^1)$-errors between the reference solution $u_{\mbox{\rm\tiny ref}}^{n}$ and various Crank-Nicolson approximations $u_{h,\deltat{}}^n$ obtained with \eqref{cnd-problem} for fixed $\deltat{}^{\mbox{\tiny rel}}=2^{-6}$ and decreasing mesh sizes.}
\label{table-lerrors-new-1}
\begin{center}
\begin{tabular}{|c|c|c|c|c|c|c|c|}
\hline $h^{\mbox{\tiny rel}}$      
& $\| \Re \hspace{2pt} e^{N}_{h,\deltat{}} \|_{L^2(\mathcal{D})}^{\mbox{\tiny rel}}$
& $\| \Im \hspace{2pt} e^{N}_{h,\deltat{}} \|_{L^2(\mathcal{D})}^{\mbox{\tiny rel}}$
& $\| \Re \hspace{2pt} \nabla e^{N}_{h,\deltat{}} \|_{L^2(\mathcal{D})}^{\mbox{\tiny rel}}$
& $\| \Im \hspace{2pt} \nabla e^{N}_{h,\deltat{}} \|_{L^2(\mathcal{D})}^{\mbox{\tiny rel}}$ \\
\hline
\hline $2^{-2}$ & 0.5571 & 1.1710 & 0.7954 & 1.1367 \\
\hline $2^{-3}$ & 0.2063 & 0.2415 & 0.4562 & 0.4780 \\
\hline $2^{-4}$ & 0.0259 & 0.0297 & 0.1006 & 0.1061 \\
\hline $2^{-5}$ & 0.0015 & 0.0017 & 0.0195 & 0.0206 \\
\hline
\hline 
 EOC & 2.85 & 3.15  & 1.78  & 1.93 \\
\hline
\end{tabular}\end{center}
\end{table}

\begin{table}[h!]
\caption{\it The table shows $L^{\infty}(L^2)$- and $L^{\infty}(H^1)$-errors between the reference solution $u_{\mbox{\rm\tiny ref}}^{n}$ and various Crank-Nicolson approximations $u_{h,\deltat{}}^n$ obtained with \eqref{cnd-problem} 
or fixed $h^{\mbox{\tiny rel}}=2^{-5}$ and decreasing time step sizes.}
\label{table-lerrors-new-2}
\begin{center}
\begin{tabular}{|c|c|c|c|c|c|c|c|}
\hline $\deltat{}^{\mbox{\tiny rel}}$
& $\| \Re \hspace{2pt} e^{N}_{h,\deltat{}} \|_{L^2(\mathcal{D})}^{\mbox{\tiny rel}}$
& $\| \Im \hspace{2pt} e^{N}_{h,\deltat{}} \|_{L^2(\mathcal{D})}^{\mbox{\tiny rel}}$
& $\| \Re \hspace{2pt} \nabla e^{N}_{h,\deltat{}} \|_{L^2(\mathcal{D})}^{\mbox{\tiny rel}}$
& $\| \Im \hspace{2pt} \nabla e^{N}_{h,\deltat{}} \|_{L^2(\mathcal{D})}^{\mbox{\tiny rel}}$ \\
\hline
\hline $2^{-2} $  & 0.3629 & 0.5156 & 0.5020 & 0.5665 \\
\hline $2^{-3} $  & 0.1088 & 0.1451 & 0.1696 & 0.1832 \\
\hline $2^{-4} $  & 0.0269 & 0.0356 & 0.0471 & 0.0506 \\
\hline $2^{-5} $  & 0.0050 & 0.0069 & 0.0205 & 0.0217 \\
\hline $2^{-6} $  & 0.0015 & 0.0017 & 0.0195 & 0.0206 \\
\hline
\hline 
 EOC & 1.98 & 2.06  & 1.17  & 1.20 \\
\hline
\end{tabular}\end{center}
\end{table}

In order to study the accuracy of the Crank-Nicolson finite element method stated in \eqref{cnd-problem}, we run various computations with different constellations for the size of the mesh size $h$ and the time step size $\deltat{}$. As there is no known exact solution to our model problem, we use the fine scale approximation $u_{\mbox{\rm\tiny ref}}^{n}$ as our reference for the computation of errors. 
The results of the computations are depicted in Tables \ref{table-eocs-coupled}, \ref{table-lerrors-new-1} and \ref{table-lerrors-new-2}.
From that we can make several observations. First, we observe a clearly convergent behavior in terms of the mesh size and time step size and we did not encounter any numerical issues (on the solver level) when computing the approximations $u_{h,\deltat{}}^n$. We can also report that the scheme preserved the mass and the energy almost up to machine precision. Second, the computed experimental orders of convergence do not correlate with the pessimistic linear rates predicted by Theorem \ref{main-theorem-2-b}. More precisely, we rather observe the quadratic rates expected under the stronger regularity assumptions of Theorem \ref{main-theorem-2-a}. 
This is emphasized by Table \ref{table-eocs-coupled}, where we depict the 
EOCs for the case that $h$ and $\deltat{}$ are refined simultaneously. 
In Table \ref{table-lerrors-new-1} we fix the time step size  $\deltat{}^{\mbox{\tiny rel}}=2^{-6}$ and only refine the spatial mesh. The convergence in terms of $h$ seems to be almost cubic for the $L^2$-error and almost quadratic for the $H^1$-error. In Table \ref{table-lerrors-new-2} we fix the mesh size with $h^{\mbox{\tiny rel}}=2^{-5}$, whereas the time steps become smaller. Here we observe a roughly quadratic rate in $\tau$ for the $L^2$-error and a linear rate for the $H^1$-error.
In the light of these results, the performance of the method appears better than predicted. This might indicate that the regularity assumption (R3) is feasible also for some class of discontinuous potentials. However, an empirical proof or disproof of this claim requires further systematic numerical studies beyond the scope of this paper. In particular, we cannot exclude the possibility of super-convergence effects when estimating the error using some reference solution, instead of the unknown exact solution.

$\\$
{\bf Conclusion.} In this paper we analyzed a mass- and energy conserving Crank-Nicolson Galerkin method. We showed that it is numerically stable under perturbations, that the scheme is well-posed in some ball (in $L^{\infty}(\mathcal{D}\times [0,T])$) around zero and we derived $L^{\infty}(L^2)$-error estimates under various regularity assumptions. All our estimates are valid for general disorder potentials in $L^{\infty}(\mathcal{D})$. However, it is not clear how or if our regularity assumptions might conflict with discontinuities in the potential. Therefore we derived two graded results. In the first main result, we assume sufficient regularity of the exact solution $u$ and derive error estimates of optimal (quadratic) order in $h$ and $\deltat{}$. The novelty with respect to previous works is that our results cover a general class of nonlinearities, potential terms and we show that the method does indeed not require a time step constraint. On the contrary, the results in \cite{San84,ADK91} are only valid, provided that the time step size is sufficiently small with respect to the spatial mesh size. In our second main result, we relax the regularity assumptions so that they appear not to be in conflict with discontinuous potentials. Under these relaxed regularity assumptions, we can still derive $L^{\infty}(L^2)$-error estimates, however, only of linear order. Furthermore, we encounter a time step constraint that was absent in the case of higher regularity. To check the practical performance of the method, we present a numerical experiment for a model problem with discontinuous potential. The corresponding numerical errors seem not to correlate with the pessimistic rates predicted for the low-regularity regime. We could neither observe degenerate convergence rates nor a practical time step constraint. Instead, we observe the behavior as predicted for the high regularity regime, i.e., convergence rates of optimal order and good approximations in all resolution regimes, independent of a coupling between mesh size and time step size.

\def\cprime{$'$}

\begin{appendix}

\section{Compatibility of regularity assumptions and discontinuous potentials}
\label{appendix-b}
In this appendix, we demonstrate that discontinuous potentials and the regularity assumptions (R1) and (R2) are actually compatible for proper initial values $u_0$. For simplicity of the presentation we consider the linear case, i.e., $\gamma=0$. The nonlinear case is briefly discussed at the end.
 
Let $V_d \in L^{\infty}(\mathcal{D})$ denote a rough disorder potential and let $u_0 \in H^1_0(\mathcal{D})$ denote a ground state or excited state to the 
stationary
Schr\"odinger equation
\begin{align}
\label{eigenvalue-prob}- \triangle u_0 + V_d u_0 
= \lambda u_0,
\end{align}
where $\lambda>0$ is the corresponding eigenvalue (the chemical potential) and $u_0$ is $L^2$-normalized, i.e. $\| u_0 \|_{L^2(\mathcal{D})}=1$. From elliptic regularity theory we know that the solution to problem \eqref{eigenvalue-prob} admits higher regularity, i.e. $u_0 \in C^{0}(\overline{\mathcal{D}}) \cap H^2(\mathcal{D})$ (cf. \cite{GiT01}). However, since $V_d$ is rough, we cannot expect any regularity beyond $H^2$.
In order to investigate the dynamics of $u_0$, the potential trap is reconfigured. In our case this means that we set $V:=V_d + V_s$, where $V_s$ is a non-negative smooth perturbation, say (for simplicity) $V_s \in C_0^{\infty}(\mathcal{D})$. With this we seek $u : [0,T] \rightarrow H^1_0(\mathcal{D})$ with $u(0)=u_0$ and
\begin{align}
\label{timedependent-prob-numexp} \ci \partial_t u = - \triangle u + V u. 
\end{align} 
Let us now assume that $u$ denotes a solution to \eqref{timedependent-prob-numexp} that is sufficiently regular. Then, from equation \eqref{timedependent-prob-numexp} we conclude that
\begin{align*}
\ci \Ltwo{ \partial_{t}^{k+1} u }{ \partial_t^{k} u } = \Ltwo{ \nabla \partial_t^{k} u}{ \nabla \partial_t^{k} u} + \Ltwo{ V \partial_t^{k} u }{ \partial_t^{k} u }.
\end{align*} 
for $k=0,1,2,3$. Taking only the imaginary part of the equation yields
\begin{align*}
0 = \Re \Ltwo{ \partial_{t}^{k+1} u }{ \partial_t^k u } = \frac{d}{dt} \|  \partial_t^k u \|_{L^2(\mathcal{D})}^2.
\end{align*}
By integrating from $0$ to $t\le T$, we have 
\begin{align*}
\|  \partial_t^{k} u(t) \|_{L^2(\mathcal{D})} = \|  \partial_t^{k} u(0) \|_{L^2(\mathcal{D})}.
\end{align*}
This means that we have to verify that the compatibility \quotes{$\partial_t^{k} u(0) \simeq \partial_t^{k} u_0$} is well-defined for rough potentials. For $k=1$ we exploit the initial condition and obtain
\begin{align}
\label{equality-dt-u-0}
\ci \partial_t u(0) 
= - \triangle u_0 + V_d u_0 + V_s u_0
\overset{\eqref{eigenvalue-prob}}{=}  \lambda u_0 + V_s u_0.
\end{align}
Hence,  
\begin{align*}
\|  \partial_t u \|_{L^{\infty}(0,T;L^2(\mathcal{D}))} \le \|  \lambda u_0 + V_s u_0 \|_{L^2(\mathcal{D})}.
\end{align*}
Analogously, we obtain for $k=2$ that
\begin{align}
\nonumber \ci \partial_{tt} u(0) 
&= - \triangle  \partial_{t} u_0 + V_d \partial_{t} u_0 + V_s \partial_{t} u_0
\overset{\eqref{equality-dt-u-0}}{=} 
 - \triangle \left( \lambda u_0 + V_s u_0 \right) + V_d \left( \lambda u_0 + V_s u_0 \right) + V_s \left( \lambda u_0 + V_s u_0 \right)\\
\nonumber &\overset{\eqref{eigenvalue-prob}}{=} 
 \lambda^2 u_0 
  - ( \triangle V_s) u_0 - 2 \nabla V_s \cdot \nabla u_0 
 - V_s \triangle u_0 + V_s V_d u_0 + V_s \left( \lambda u_0 + V_s u_0 \right) \\
\label{equality-dtt-u-0}  &\overset{\eqref{eigenvalue-prob}}{=} 
   \left( (V_s + \lambda)^2 - \triangle V_s \right) u_0 
 - 2 \nabla V_s \cdot \nabla u_0.
\end{align}
Hence 
\begin{align*}
\|  \partial_{tt} u \|_{L^{\infty}(0,T;L^2(\mathcal{D}))} \le \|  \left( (V_s + \lambda)^2 - \triangle V_s \right) u_0 
 - 2 \nabla V_s \cdot \nabla u_0 \|_{L^2(\mathcal{D})}.
\end{align*}
Furthermore, since 
\begin{align*}
- \triangle (\partial_{t} u) =  \ci \partial_{tt} u - V \partial_{t} u,
\end{align*} 
where $\ci \partial_{tt} u - V \partial_{t} u \in L^{\infty}(0,T;L^2(\mathcal{D}))$ (for which we just derived corresponding bounds depending on $u_0$), we can also conclude by elliptic regularity theory that
\begin{align*}
\| \partial_{t} u\|_{L^{\infty}(0,T;H^2(\mathcal{D}))} \le C(u_0).
\end{align*} 
The equation $\ci \partial_{tt} u(0)=\left( (V_s + \lambda)^2 - \triangle V_s \right) u_0 - 2 \nabla V_s \cdot \nabla u_0$ leads to another important observation: We have $\partial_{tt} u(0) \in H^1_0(\mathcal{D})$, but we do {\it not} have $H^2$-regularity as this would require $\nabla u_0 \in H^2(\mathcal{D})$, which is clearly not available due to the roughness of $V_d$. Therefore we cannot repeat the same argument for $\partial_{ttt} u(0)$. Observe that for $k=3$ we have
\begin{align*}
\ci \partial_{ttt} u(0) 
&\overset{\eqref{equality-dtt-u-0}}{=} 
- \triangle \left( \left( (V_s + \lambda)^2 - \triangle V_s \right) u_0 
 - 2 \nabla V_s \cdot \nabla u_0 \right) + V \left( \left( (V_s + \lambda)^2 - \triangle V_s \right) u_0 
 - 2 \nabla V_s \cdot \nabla u_0 \right),
\end{align*}
which implies that $\partial_{ttt} u(0)  \in H^{-1}(\mathcal{D})$, but it is not in $L^2(\mathcal{D})$. In order to obtain $\partial_{ttt} u  \in L^{\infty}(L^2)$ the potential needs to be at lest in $H^1$ (which contradicts the notion of a discontinuous or rough potential). Consequently, we can neither hope for $\partial_{ttt} u  \in L^{\infty}(L^2)$ nor for $\partial_{tt} u  \in L^{\infty}(H^2)$.
The only thing that we can hope for is to verify $\partial_{tt} u \in L^{\infty}(H^1)$. And indeed, analogously to the proof of energy conservation we easily observe that
\begin{align*}
\int_{\mathcal{D}} |\nabla \partial_{tt} u(t)|^2 + \int_{\mathcal{D}} V |\partial_{tt} u(t)|^2 
= \int_{\mathcal{D}} |\nabla \partial_{tt} u(0)|^2 + \int_{\mathcal{D}} V |\partial_{tt} u(0)|^2. 
\end{align*}
With \eqref{equality-dtt-u-0} we conclude that the right-hand side is well-defined and bounded for rough potentials $V_d$. 

To summarize our findings, we could verify 
\begin{align*}
\partial_{t} u \in L^{\infty}(H^2) \qquad \mbox{and} \qquad \partial_{tt} u \in L^{\infty}(H^1),
\end{align*}
however, any regularity assumptions of higher order seem to contradict the setting of a discontinuous potential.

$\\$
In the nonlinear case, similar arguments can be used. However, the calculations become significantly more technical since we typically do no longer have the conservation properties such as $\| \partial_t^{(k)} u \|_{L^\infty(0,T;L^2(\mathcal{D}))} = \| \partial_t^{(k)} u(0) \|_{L^2(\mathcal{D})}$ for $k\ge1$. Still for small times it is possible to show an inequality which plays a similar role. For instance, in the case of the cubic nonlinearity $\gamma(|u|^2)u=\beta |u|^2u$ (where $\beta>0$ is a parameter that characterizes the type and the number of particles) it is possible to show that there exists a minimum time $T>0$ and constant $c_T >0$ (independent of the regularity of the potential), such that
$$
\| \partial_t^{(k)} u \|_{L^\infty(0,T;L^2(\mathcal{D}))} \le c_T \| \partial_t^{(k)} u(0) \|_{L^2(\mathcal{D})}
$$
for $k \in \mathbb{N}$ and provided that $u$ is sufficiently smooth. With this it is possible to proceed in a similar way as in the linear case and one can draw the same conclusions.

\end{appendix}


\begin{thebibliography}{10}

\bibitem{ADK91}
G.~D. Akrivis, V.~A. Dougalis, and O.~A. Karakashian.
\newblock On fully discrete {G}alerkin methods of second-order temporal
  accuracy for the nonlinear {S}chr\"odinger equation.
\newblock {\em Numer. Math.}, 59(1):31--53, 1991.

\bibitem{ABB13d}
X.~Antoine, W.~Bao, and C.~Besse.
\newblock Computational methods for the dynamics of the nonlinear
  {S}chr\"odinger/{G}ross-{P}itaevskii equations.
\newblock {\em Comput. Phys. Commun.}, 184(12):2621--2633, 2013.

\bibitem{AnD14}
X.~Antoine and R.~Duboscq.
\newblock Robust and efficient preconditioned {K}rylov spectral solvers for
  computing the ground states of fast rotating and strongly interacting
  {B}ose-{E}instein condensates.
\newblock {\em J. Comput. Phys.}, 258:509--523, 2014.

\bibitem{AnD15}
X.~Antoine and R.~Duboscq.
\newblock Gpelab, a matlab toolbox to solve gross-pitaevskii equations ii:
  Dynamics and stochastic simulations.
\newblock {\em Comput. Phys. Commun.}, 193:95--117, 2015.

\bibitem{BaC12}
W.~Bao and Y.~Cai.
\newblock Uniform error estimates of finite difference methods for the
  nonlinear {S}chr\"odinger equation with wave operator.
\newblock {\em SIAM J. Numer. Anal.}, 50(2):492--521, 2012.

\bibitem{BaC13b}
W.~Bao and Y.~Cai.
\newblock Mathematical theory and numerical methods for {B}ose-{E}instein
  condensation.
\newblock {\em Kinet. Relat. Models}, 6(1):1--135, 2013.

\bibitem{BaC13}
W.~Bao and Y.~Cai.
\newblock Optimal error estimates of finite difference methods for the
  {G}ross-{P}itaevskii equation with angular momentum rotation.
\newblock {\em Math. Comp.}, 82(281):99--128, 2013.

\bibitem{BaD04}
W.~Bao and Q.~Du.
\newblock Computing the ground state solution of {B}ose-{E}instein condensates
  by a normalized gradient flow.
\newblock {\em SIAM J. Sci. Comput.}, 25(5):1674--1697, 2004.

\bibitem{BaT03}
W.~Bao and W.~Tang.
\newblock Ground-state solution of {B}ose-{E}instein condensate by directly
  minimizing the energy functional.
\newblock {\em J. Comput. Phys.}, 187(1):230--254, 2003.

\bibitem{CCM10}
E.~Canc{\`e}s, R.~Chakir, and Y.~Maday.
\newblock Numerical analysis of nonlinear eigenvalue problems.
\newblock {\em J. Sci. Comput.}, 45(1-3):90--117, 2010.

\bibitem{Caz03}
T.~Cazenave.
\newblock {\em Semilinear {S}chr\"odinger equations}, volume~10 of {\em Courant
  Lecture Notes in Mathematics}.
\newblock New York University, Courant Institute of Mathematical Sciences, New
  York; American Mathematical Society, Providence, RI, 2003.

\bibitem{CrN02}
D.~Cruz-Uribe and C.~J. Neugebauer.
\newblock Sharp error bounds for the trapezoidal rule and {S}impson's rule.
\newblock {\em JIPAM. J. Inequal. Pure Appl. Math.}, 3(4):Article 49, 22, 2002.

\bibitem{DaH10}
I.~Danaila and F.~Hecht.
\newblock A finite element method with mesh adaptivity for computing vortex
  states in fast-rotating {B}ose-{E}instein condensates.
\newblock {\em J. Comput. Phys.}, 229(19):6946--6960, 2010.

\bibitem{DaK10}
I.~Danaila and P.~Kazemi.
\newblock A new {S}obolev gradient method for direct minimization of the
  {G}ross-{P}itaevskii energy with rotation.
\newblock {\em SIAM J. Sci. Comput.}, 32(5):2447--2467, 2010.

\bibitem{Gau11}
L.~Gauckler.
\newblock Convergence of a split-step {H}ermite method for the
  {G}ross-{P}itaevskii equation.
\newblock {\em IMA J. Numer. Anal.}, 31(2):396--415, 2011.

\bibitem{GiT01}
D.~Gilbarg and N.~S. Trudinger.
\newblock {\em Elliptic partial differential equations of second order}.
\newblock Classics in Mathematics. Springer-Verlag, Berlin, 2001.
\newblock Reprint of the 1998 edition.

\bibitem{Gro61}
E.~P. Gross.
\newblock Structure of a quantized vortex in boson systems.
\newblock {\em Nuovo Cimento (10)}, 20:454--477, 1961.

\bibitem{HeM15}
P.~Henning and A.~M{\aa}lqvist.
\newblock The finite element method for the time-dependent {G}ross-{P}itaevskii
  equation with angular momentum rotation.
\newblock {\em SIAM J. Numer. Anal.}, 55(2):923--952, 2017.

\bibitem{HMP14b}
P.~Henning, A.~M{\aa}lqvist, and D.~Peterseim.
\newblock Two-{L}evel {D}iscretization {T}echniques for {G}round {S}tate
  {C}omputations of {B}ose-{E}instein {C}ondensates.
\newblock {\em SIAM J. Numer. Anal.}, 52(4):1525--1550, 2014.

\bibitem{JKM14}
E.~Jarlebring, S.~Kvaal, and W.~Michiels.
\newblock An inverse iteration method for eigenvalue problems with eigenvector
  nonlinearities.
\newblock {\em SIAM J. Sci. Comput.}, 36(4):A1978--A2001, 2014.

\bibitem{KaM98}
O.~Karakashian and C.~Makridakis.
\newblock A space-time finite element method for the nonlinear {S}chr\"odinger
  equation: the discontinuous {G}alerkin method.
\newblock {\em Math. Comp.}, 67(222):479--499, 1998.

\bibitem{KaM99}
O.~Karakashian and C.~Makridakis.
\newblock A space-time finite element method for the nonlinear {S}chr\"odinger
  equation: the continuous {G}alerkin method.
\newblock {\em SIAM J. Numer. Anal.}, 36(6):1779--1807, 1999.

\bibitem{Leo09}
G.~Leoni.
\newblock {\em A first course in {S}obolev spaces}, volume 105 of {\em Graduate
  Studies in Mathematics}.
\newblock American Mathematical Society, Providence, RI, 2009.

\bibitem{LSY01}
E.~H. Lieb, R.~Seiringer, and J.~Yngvason.
\newblock A rigorous derivation of the {G}ross-{P}itaevskii energy functional
  for a two-dimensional {B}ose gas.
\newblock {\em Comm. Math. Phys.}, 224(1):17--31, 2001.
\newblock Dedicated to Joel L. Lebowitz.

\bibitem{Lub08}
C.~Lubich.
\newblock On splitting methods for {S}chr\"odinger-{P}oisson and cubic
  nonlinear {S}chr\"odinger equations.
\newblock {\em Math. Comp.}, 77(264):2141--2153, 2008.

\bibitem{NBP13}
B.~Nikolic, A.~Balaz, and A.~Pelster.
\newblock Dipolar {B}ose-{E}instein condensates in weak anisotropic disorder.
\newblock {\em Physical Review A - Atomic, Molecular, and Optical Physics},
  88(1), 2013.

\bibitem{Pit61}
L.~P. Pitaevskii.
\newblock {\em Vortex lines in an imperfect {B}ose gas}.
\newblock Number~13. Soviet Physics JETP-USSR, 1961.

\bibitem{San84}
J.~M. Sanz-Serna.
\newblock Methods for the numerical solution of the nonlinear {S}chroedinger
  equation.
\newblock {\em Math. Comp.}, 43(167):21--27, 1984.

\bibitem{SSB16}
B.~Sataric, V.~Slavnic, A.~Belic, A.~Balaz, P.~Muruganandam, and S.~K.
  Adhikari.
\newblock Hybrid {O}pen{MP}/{MPI} programs for solving the time-dependent
  gross-pitaevskii equation in a fully anisotropic trap.
\newblock {\em Comput. Phys. Commun.}, 200:411--417, 2016.

\bibitem{Tha12b}
M.~Thalhammer.
\newblock Convergence analysis of high-order time-splitting pseudospectral
  methods for nonlinear {S}chr\"odinger equations.
\newblock {\em SIAM J. Numer. Anal.}, 50(6):3231--3258, 2012.

\bibitem{ThA12}
M.~Thalhammer and J.~Abhau.
\newblock A numerical study of adaptive space and time discretisations for
  {G}ross-{P}itaevskii equations.
\newblock {\em J. Comput. Phys.}, 231(20):6665--6681, 2012.

\bibitem{Tou91}
Y.~Tourigny.
\newblock Optimal {$H^1$} estimates for two time-discrete {G}alerkin
  approximations of a nonlinear {S}chr\"odinger equation.
\newblock {\em IMA J. Numer. Anal.}, 11(4):509--523, 1991.

\bibitem{VVB12}
D.~Vudragovic, I.~Vidanovic, A.~Balaz, P.~Muruganandam, and S.~K. Adhikari.
\newblock C programs for solving the time-dependent gross-pitaevskii equation
  in a fully anisotropic trap.
\newblock {\em Comput. Phys. Commun.}, 183(9):2021--2025, 2012.

\bibitem{Wan14}
J.~Wang.
\newblock A new error analysis of {C}rank-{N}icolson {G}alerkin {FEM}s for a
  generalized nonlinear {S}chr\"odinger equation.
\newblock {\em J. Sci. Comput.}, 60(2):390--407, 2014.

\bibitem{WWW98}
J.~Williams, R.~Walser, C.~Wieman, J.~Cooper, and M.~Holland.
\newblock Achieving steady-state {B}ose-{E}instein condensation.
\newblock {\em Physical Review A - Atomic, Molecular, and Optical Physics},
  57(3):2030--2036, 1998.

\bibitem{ZSL98}
I.~Zapata, F.~Sols, and A.~J. Leggett.
\newblock Josephson effect between trapped {B}ose-{E}instein condensates.
\newblock {\em Physical Review A - Atomic, Molecular, and Optical Physics},
  57(1):R28--R31, 1998.

\bibitem{Zou01}
G.~E. Zouraris.
\newblock On the convergence of a linear two-step finite element method for the
  nonlinear {S}chr\"odinger equation.
\newblock {\em M2AN Math. Model. Numer. Anal.}, 35(3):389--405, 2001.

\end{thebibliography}
\end{document}